\begin{document}
	\renewcommand{\a}{\alpha}
	\newcommand{\D}{\Delta}
	\newcommand{\ddt}{\frac{d}{dt}}
	\counterwithin{equation}{section}
	\newcommand{\e}{\epsilon}
	\newcommand{\eps}{\varepsilon}
	\newtheorem{theorem}{Theorem}[section]
	
	\newtheorem{proposition}{Proposition}[section]
	\newtheorem{lemma}[proposition]{Lemma}
	\newtheorem{remark}{Remark}[section]
	\newtheorem{example}{Example}[section]
	\newtheorem{definition}{Definition}[section]
	\newtheorem{corollary}{Corollary}[section]
	\makeatletter
	\newcommand{\rmnum}[1]{\romannumeral #1}
	\newcommand{\Rmnum}[1]{\expandafter\@slowromancap\romannumeral #1@}
	\makeatother

\title{Dispersive decay estimates for the magnetic Schr\"odinger equations}

\author{Zhiwen Duan\, \, \, Lei Wei$^{\star}$\\
{\small {\it  School of Mathematics and Statistics, Huazhong University of Science }}\\
{\small {\it and Technology, Wuhan, {\rm 430074,} P.R.China.}}  \\
{\small {\it $^{\star}$Corresponding author. Email: weileimath@hust.edu.cn }}\\
{\small {\it Contributing author. Email: duanzhw@hust.edu.cn}}}
\date{}
\maketitle

\centerline{\large\bf Abstract}	
\par	
In this paper, we present a proof of dispersive decay for both linear and nonlinear magnetic Schr\"odinger equations. To achieve this, we introduce the fractional distorted Fourier transforms with magnetic potentials and define the fractional differential operator $\arrowvert J_{A}(t)\arrowvert^{s}$. By leveraging the properties of the distorted Fourier transforms and the Strichartz estimates of $\arrowvert J_{A}\arrowvert^{s}u$, we establish the dispersive bounds with the decay rate $t^{-\frac{n}{2}}$. This decay rate provides valuable insights into the spreading properties and long-term dynamics of the solutions to the magnetic Schr\"odinger equations.\\

\noindent{\bf Keywords:}
Magnetic Schr\"{o}dinger equation, Dispersive decay, Scattering, Distorted Fourier transform
\par
\noindent {\bf AMS Subject Classifications 2020:} 35Q60, 35B40, 35Q55, 42A38

\noindent{\bf Authors’ Contributions:} All authors contributed equally to this work. All authors read and approved the final manuscript.

\noindent{\bf Funding Statement:} This work was supported by National Natural Science Foundation of China (NSFC) (61671009,12171178).

\noindent{\bf Conflicts of Interest:} The authors declare that they have no conflicts of interest to report regarding the present study.

\newpage
\tableofcontents
\newpage

\section{Introduction}
We consider the magnetic Schr\"odinger operator  $$-\Delta_{A}:=-(\nabla+iA)^{2}=-\Delta-iA\cdot\nabla-i\nabla\cdot A+\arrowvert A\arrowvert^{2}$$
and the nonlinear magnetic Schr\"odinger equation
\begin{equation}\label{1.1}
	\left\{
	\begin{split}
		&(i\partial_{t}+\Delta_{A})u=\arrowvert u\arrowvert^{p-1}u, \ x\in \mathbb{R}^{n},\ t>1,\\
		&u(1)=u_{0},
	\end{split}
	\right.
\end{equation}
where $p>1+\lceil\frac{n}{2}\rceil$, and  $A(x)=(A_{1}(x),...,A_{n}(x))$ is a real-valued vector function which satisfies the following hypothesis:
\begin{equation}\label{1.2}
	\arrowvert A(x)\arrowvert+\arrowvert\nabla A(x)\arrowvert+\arrowvert\nabla\nabla A(x)\arrowvert\leqslant C\langle x \rangle^{-\beta},
\end{equation}
for some $\beta>n+2$.

Therefore, we claim that $\sigma(\Delta_{A})=(-\infty,0)$, where $\sigma(\Delta_{A})$ is the spectrum of operator $\Delta_{A}$. In fact, for any $0\neq u\in \mathscr{S}$, we have
\begin{equation}\label{1.3}
((\nabla+iA)^{2}u,u)_{L^{2}}
=-\int_{\mathbb{R}^{n}}\sum\limits_{j=1}^{n}\arrowvert(\partial_{x_{j}}+iA_{j})u\arrowvert^{2}dx<0.
\end{equation}
Furthermore, from Lemma A.2 in \cite{KK} (also see \cite{KT}), we obtain that zero is neither an eigenvalue nor a resonance of the magnetic Schr\"odinger operator $\Delta_{A}$ and $\sigma_{sc}(\Delta_{A})=\emptyset$, where $0$ is called a resonance if there exists a distribution solution of $-\Delta_{A}u=0$ such that $u\in L^{2,-\sigma}$ but  $u\notin L^{2}$ for any $\sigma>\frac{1}{2}$ in \cite{JK}, and   $\sigma_{sc}(\Delta_{A})$ is the singular continuous spectrum of $\Delta_{A}$.

\begin{remark}\label{remark1.1}
	For $n=1$, we recall the following fact: let $G(x)$ be a real smooth function and $\mathcal{T}$ be the gauge transformation:
	$$\mathcal{T}u(t,x)=e^{-iG(x)}u(t,x).$$
	Then $\mathcal{T}$ transforms \eqref{1.1} into the same one with $A$ being replaced by $A+\nabla_{x}G$. In particular, taking $G(x)=-\int_{0}^{x}A(y)dy$, the potential $A$ is eliminated from \eqref{1.1} by the gauge transformation $\mathcal{T}$. Therefore, we only consider the case where the space dimension $n\geqslant2$.
\end{remark}

From the kernel expression of $e^{-it\Delta}$, the dispersive bound of the free Schr\"odinger equations follows
$$
\|e^{-it\Delta}\psi\|_{L^{\infty}(\mathbb{R}^{n})}\leqslant C\arrowvert t\arrowvert^{-\frac{n}{2}}\|\psi\|_{L^{1}(\mathbb{R}^{n})}.
$$
It is naturally to ask whether such a bound is applicable to more general Schr\"odinger equations. Let's recall it.

For the linear Schr\"odinger equations with a real-valued potential $V(x)$: $i\partial_{t}u-\Delta_{V}u=0$, where $\Delta_{V}:=\Delta-V(x)$, the original dispersive estimates expressed $e^{-it\Delta_{V}}$ as a mapping between the weighted $L^{2}$-spaces,  and the weights are exponential \cite{R} or polynomial \cite{JK}. A significant advance was made by Journ\'{e}, Soffer and Sogge \cite{JSS}, who proved the bound of $\|e^{-it\Delta_{V}}\|_{L^{1}\rightarrow L^{\infty}}$:
\begin{equation}\label{1.4}
	\|e^{-it\Delta_{V}}\mathscr{P}_{ac}\psi\|_{L^{\infty}}\leqslant C\arrowvert t\arrowvert^{-\frac{n}{2}}\|\psi\|_{L^{1}},\ \forall\  \psi\in\mathscr{S}(\mathbb{R}^{n}),\ n\geqslant3,
\end{equation}
where $\mathscr{P}_{ac}$ is the projection onto the absolute continuous spectrum of $-\Delta_{V}$, and the potential $V(x)$ satisfies some decay conditions. In addition, they needed to assume that
\begin{equation}\label{1.5}
	\ \ \ \ zero\ is\ neither\ an\ eigenvalue\ nor\ a\  resonance\ of -\Delta_{V}.
\end{equation}
 For the case $n=3$, Yajima \cite{Ya2} proved that the wave operators are bounded and the condition applied to potential $V$ is weaker than \cite{JSS}. Goldberg and Schlag \cite{GS} used different method from both \cite{JSS} and \cite{Ya2} to get the dispersive bound in \eqref{1.4} under different assumptions on $V$ for the case $n=1$ and $n=3$ respectively. Rodnianski and Schlag \cite{RS1} proved the dispersive estimates of Schr\"odinger equations with time-dependent and time-independent potential $V(x)$. Goldberg \cite{Go2} proved the dispersive estimates with the potential $V(x)\in L^{p}\cap L^{q}$, $p<\frac{3}{2}<q$, and satisfies the zero-energy condition \eqref{1.5}. For other dimensions, Schlag \cite{Sc1} proved $L^{1}(\mathbb{R}^{2})\rightarrow L^{\infty}(\mathbb{R}^{2})$ bound and the decay rate satisfied by $V(x)$ is greater than $3$. Cardoso, Cuevas and Vodev \cite{CCV} showed that the dispersive estimates hold for $n=4,5$. For even dimensional case \cite{FY} and odd dimensional case \cite{Ya3}, the $L^{p}$ boundedness of wave operators for Schr\"odinger operators with threshold singularities was obtained respectively, and so on.

For the magnetic Schr\"{o}dinger equations has gained significant attention and interest among scientists in recent years because taht the dispersive estimates for magnetic Schr\"{o}dinger equations are importance in the study of quantum mechanics and wave propagation in the presence of magnetic fields. In \cite[Theorem 4]{Ya1}, Yajima proved the short-time dispersive estimate for the magnetic Schr\"{o}dinger equation, where the magnetic potential $A(t,x)$ satisfies some conditions and depends on the time $t$. Especially, for the time-independent magnetic potential, Yajima \cite{Ya1} proved the following estimate:
\begin{equation}\label{1.6}
	\|e^{it(-i\nabla-A(x))^{2}}\psi\|_{L^{\infty}_{x}(\mathbb{R}^{n})}\leqslant C\arrowvert t\arrowvert^{-\frac{n}{2}}\|\psi\|_{L^{1}_{x}(\mathbb{R}^{n})},\ \arrowvert t\arrowvert\leqslant T\ll1,\ n\geqslant3.
\end{equation}
After, Komech and Kopylova \cite{KK},   obtained the time-decaying rate of the solutions in weighted $L^{2}$-space for the following Schr\"{o}dinger equation
\begin{equation}\label{1.7}
	\|e^{it((-i\nabla-A(x))^{2}+V(x))}\mathscr{P}_{c}\psi\|_{L^{2,-\sigma}}\leqslant C\langle t \rangle^{-\frac{3}{2}}\|\psi\|_{L^{2,\sigma}},\ t\in \mathbb{R},
\end{equation}
where $\sigma>\frac{5}{2}$, $\mathscr{P}_{c}$ is the projection onto the continuous spectrum of the operator $(-i\nabla-A(x))^{2}+V(x)$.

For the nonlinear Schr\"odinger equation $i\partial_{t}u-\Delta u=\arrowvert u\arrowvert^{p-1}u,$ it is natural to ask whether the solutions of this equation have the decay rate of the solution to the linear Schr\"odinger equation. In \cite{St3,St2}, Strauss proved that the zero solution is the only asymptotically free solution when $1<p\leqslant1+\frac{2}{n}$ for $n\geqslant2$ and when $1<p\leqslant2$ for $n=1$ (see also \cite{St1}). Using the idea of Glassey \cite{Gl}, the time-decaying estimates were extended to the case $1<p\leqslant 1+\frac{2}{n}$ for $n\geqslant1$ by Barab \cite{Ba}. When  $1+\frac{2}{n}<p<1+\frac{4}{n}$ for $n\geqslant1$, the time-decaying estimates were proved by Mckean and Shatah \cite{MS}.

For the nonlinear Schr\"odinger equations with potential $V(x)$, Cuccagna, Georgiev and Visciglia \cite{CGV} for the one-dimensional case proved that
$$\|u(t)\|_{L^{\infty}(\mathbb{R})}\leqslant Ct^{-\frac{1}{2}}\|u_{0}\|_{\Sigma_{s}}, \,\, s>\frac{1}{2},\ t>1,$$
where the initial datum $u_0=u(1)$ is small in the Hilbert space $H^{s}(\mathbb{R})\cap\arrowvert x\arrowvert^{s}L^{2}(\mathbb{R})$ denoted by $\Sigma_{s}$, i.e. $\|u_{0}\|_{\Sigma_{s}}\ll1$. Their arguments in \cite{CGV} were based on the distorted Fourier transforms (see \cite{H} for details) and the equivalence property  $\|(-\Delta)^{\frac{s}{2}}u\|_{L^{2}(\mathbb{R})}\sim\|(-\Delta+V)^{\frac{s}{2}}u\|_{L^{2}(\mathbb{R})}$ for $0\leqslant s<\frac{1}{2}$. The proof of the equivalence was based on the explicit expression of the resolvent kernel in one dimensional space. As far as I know, the nonlinear Schrödinger equation with magnetic potential remains a challenging problem that is actively being investigated.

In this paper, we focus on studying the magnetic Schrödinger operator $-\Delta_{A}$ and the nonlinear Schr\"{o}dinger equation \eqref{1.1}. Our main result is the derivation of the dispersive decay with an optimal decay rate of $t^{-\frac{n}{2}}$. To achieve this, we draw inspiration from the method used in \cite{CGV} and introduce the fractional magnetic distorted Fourier transform for the fractional Schr\"{o}dinger operators $(-\Delta_{A})^{\frac{s}{2}}$ ($s>0$). This approach allows us to establish the desired dispersive estimates and gain insights into the decay behavior of the solutions to the nonlinear Schr\"{o}dinger equation in the presence of a magnetic potential.

Before showing our work, we recall the well-posedness of solutions for the magnetic Schr\"{o}dinger equations. The well-posedness of weak solutions of the nonlinear Schr\"odinger equation was obtained in the case of $A=0$ in \cite{K1,K2,T1}, and in the case of $A\neq0$ in  \cite{Bo,Na}. For the local well-posedness of strong solutions of the same equation, it was acquired in \cite{K2,T2} when $A=0$, and in \cite{NS} when $A\neq0$. In Appendix A of this paper, we obtain that the initial value problem of the nonlinear magnetic Schr\"odinger equation is globally well-posed in  $L^{\infty}((1,\infty)\times\mathbb{R}^{n})$.\\

$Outline\ of\ this\ paper:$

In the key Section 2, we consider the linear magnetic Schr\"odinger equation and obtain the sharp time decay estimates of the $\|u\|_{L_x^{\infty}}$. On the one hand, we construct the fractional magnetic distorted Fourier transforms at the resolvent points, $$F^{A}u(\xi)=\mathscr{F}((I+V_{x}R^{s}_{0}(\tau))^{-1}u)(\xi),\ \tau>0.$$  In order to make this construct meaningful, we prove the boundedness of operators $V_{x}$ and $R^{s}_{0}(\tau)$. Next, we have the pseudo-intertwining property of the distorted Fourier transforms, $$\arrowvert\xi\arrowvert^s\mathscr{F}u=F^{A}\mathscr{H}u+\tau(F^{A}u-\mathscr{F}u),$$ where $\mathscr{H}$ is the self-adjoint realization of $(-\Delta_{A})^{\frac{s}{2}}$ for any $s>0$ in $L^{2}(\mathbb{R}^{n})$. It is worth noting that we choose the resolvent point $\tau$ near zero to deal with the remainder, then we obtain the equivalence property for $0<s<\frac{n}{2}$,
$$\|(-\Delta)^{\frac{s}{2}}u\|_{L^{2}_{x}(\mathbb{R}^{n})}\sim\|(-\Delta_{A})^{\frac{s}{2}}u\|_{L^{2}_{x}(\mathbb{R}^{n})}.$$

On the other hand, applying the intertwining property of the distorted Fourier transforms at the spectral points (for details see \cite{DW})
$$F_\pm^{A}\mathscr{H}u=\arrowvert\xi\arrowvert^sF_\pm^{A}u$$
to estimate the $L^{\infty}$-norm of solution for linear magnetic Schr\"odinger equation, we have
\begin{equation}\label{1.8}
\|u\|_{L^{\infty}_{x}(\mathbb{R}^{n})}\leqslant Ct^{-\frac{n}{2}}\|u\|^{1-\frac{n}{2s}}_{L^{2}_{x}(\mathbb{R}^{n})}\cdot\|\arrowvert J_{A}\arrowvert^{s}u\|^{\frac{n}{2s}}_{L^{2}_{x}(\mathbb{R}^{n})},\ s>\frac{n}{2},
\end{equation}
(see Lemma \ref{lemma2.15} and $Step\ 4$ in proof of Theorem \ref{theorem1.1}), where
\begin{flalign*}
\arrowvert J_{A}\arrowvert^{s}:=M(t)(-t^{2}\Delta_{A})^{\frac{s}{2}}M(-t),\ M(t):=e^{\frac{i\arrowvert x\arrowvert^{2}}{4t}}.
\end{flalign*}
Therefore, it is enough to show the boundedness of $\|\arrowvert J_{A}\arrowvert^{s}u\|_{L^{2}_{x}(\mathbb{R}^{n})}$. Note that
\begin{flalign}\label{1.9}
(i\partial_{t}+\Delta_{A})\arrowvert J_{A}\arrowvert^{s}u=[i\partial_{t}+\Delta_{A},\arrowvert J_{A}\arrowvert^{s}]u=it^{s-1}M(t)V(s)M(-t)u,
\end{flalign}
where $V(s)$ presentes in \eqref{2.15}. Indeed, the estimate of $V(s)$ is a crucial step in the proof process, and the most challenging aspect lies in obtaining the estimate of the resolvent $R_{A}$. 

Our main challenges lie in proving the estimates and the first-order derivative estimates of $R_{A}(\tau)$ while also obtaining the power of $\tau$ in the estimates. Previous works by Cuenin and Kenig \cite{CK} have provided the half-order derivative estimates and $L^{q'}(\mathbb{R}^{n})\rightarrow L^{q}(\mathbb{R}^{n})$ estimates ($q\in[2,\frac{2n}{n-2}]$) of the magnetic resolvent. To address this, we start by establishing some estimates of the free resolvent $R_{0}(\tau)=(\tau-\Delta)^{-1}$ with weight functions using the resolvent identity. Then, we use these resolvent estimates and the integral expression of $V(s)$ to handle the estimate of $V(s)$ when $\tau\in(0,1)$. In the process, in order to reduce the power of $\tau$, we exchange the regularity of space with the decay of the potential $A(x)$. Through this approach, we are able to obtain the desired estimate of $V(s)$.

Combining the equivalence property of $(-\Delta)^{\frac{s}{2}}$ and $(-\Delta_{A})^{\frac{s}{2}}$, and the Strichartz estimate of $\arrowvert J_{A}\arrowvert^{s}u$, we have the following decay result.
\begin{theorem}\label{theorem1.1}
	Let $A(x)$ satisfy hypothesis \eqref{1.2}, then for any $s>\frac{n}{2}$, there exists a constant $C>0$ such that
	\begin{equation}\label{1.10}
		\|e^{it\Delta_{A}}u_{0}\|_{L^{\infty}_{x}(\mathbb{R}^{n})}\leqslant Ct^{-\frac{n}{2}}\|u_{0}\|_{\Sigma_{s}},\  t\geqslant1,
	\end{equation}
	where $\|u_{0}\|_{\Sigma_{s}}:=\|(-\Delta)^{\frac{s}{2}}(e^{-\frac{i\arrowvert x\arrowvert^{2}}{4}}u_{0})\|_{L^{2}_{x}}$.
\end{theorem}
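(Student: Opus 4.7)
The plan is to deduce the theorem from the pointwise inequality (1.8), which reduces the dispersive bound to a uniform-in-$t$ control of $\||J_A|^s u(t)\|_{L^{2}_{x}}$. Indeed, for $s>n/2$ inequality (1.8) gives
\begin{equation*}
\|u(t)\|_{L^{\infty}_{x}} \leq C t^{-n/2} \|u(t)\|_{L^{2}_{x}}^{1-n/(2s)} \||J_A|^s u(t)\|_{L^{2}_{x}}^{n/(2s)},
\end{equation*}
and since $e^{it\Delta_{A}}$ is $L^{2}$-unitary, $\|u(t)\|_{L^{2}_{x}}=\|u_0\|_{L^{2}_{x}}$. Hence it is enough to prove the uniform bound $\sup_{t\geq 1}\||J_A|^s u(t)\|_{L^{2}_{x}}\lesssim \|u_0\|_{\Sigma_s}$.

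The key observation is that (1.9) realises $|J_A|^s u$ as the solution of an inhomogeneous magnetic Schr\"odinger equation with source $it^{s-1} M(t)V(s)M(-t)u$. Duhamel's formula together with the $L^{2}$-unitarity of $e^{it\Delta_A}$ and of the multiplier $M(\pm t)$ gives
\begin{equation*}
\||J_A|^s u(t)\|_{L^{2}_{x}} \leq \||J_A|^s u(1)\|_{L^{2}_{x}} + \int_{1}^{t} (t')^{s-1}\, \|V(s)\, M(-t')u(t')\|_{L^{2}_{x}}\, dt',
\end{equation*}
the same bound following equivalently from the endpoint Strichartz estimate for $|J_A|^s u$ noted in the introduction. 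For the first term, $|J_A|^s u(1)=M(1)(-\Delta_A)^{s/2}M(-1)u_0$ and the isometry of $M(\pm 1)$ on $L^{2}$ reduce it to $\|(-\Delta_A)^{s/2}(e^{-i|x|^{2}/4}u_0)\|_{L^{2}}$, which by the fractional-norm equivalence $\|(-\Delta_A)^{s/2}v\|_{L^{2}}\sim\|(-\Delta)^{s/2}v\|_{L^{2}}$ proved in Section~2 is comparable to $\|u_0\|_{\Sigma_s}$. For the integral term, the Section~2 estimates on $V(s)$ (used with resolvent parameter $\tau\sim 1/t'$), combined with $\|M(-t')u(t')\|_{L^{2}}=\|u_0\|_{L^{2}}$, should bound the integrand by $C(t')^{-1-\delta}\|u_0\|_{\Sigma_s}$ for some $\delta>0$; integration then closes the uniform bound and completes the proof.

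The principal obstacle is verifying that the estimate on $V(s)$ supplies enough $\tau$-decay to absorb the explicit $(t')^{s-1}$ prefactor and produce an integrable integrand on $[1,\infty)$. This rests on the Section~2 bounds for the magnetic resolvent $R_A(\tau)$ and its first derivative near $\tau=0$, with painstaking tracking of the powers of $\tau$. The crucial technical device---trading spatial regularity for decay of the potential $A$, as sketched in the introduction---is precisely what forces the strong decay hypothesis $\beta>n+2$ in (1.2); any loss of a power of $\tau$ in the resolvent bounds would make the Duhamel integral diverge and break the argument.
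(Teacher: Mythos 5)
Your overall skeleton matches the paper's: use \eqref{1.8} (Lemma \ref{lemma2.15}) together with $L^2$-conservation to reduce the theorem to a uniform bound $\sup_{t\geq1}\|\arrowvert J_{A}\arrowvert^{s}u(t)\|_{L^{2}_{x}}\lesssim\|u_{0}\|_{\Sigma_{s}}$, and obtain that bound from the commutator identity \eqref{1.9}. However, the step you describe as ``should bound the integrand by $C(t')^{-1-\delta}\|u_0\|_{\Sigma_s}$'' hides a genuine gap: the plain Duhamel / endpoint Strichartz inequality in $L^{1}_{t'}L^{2}_{x}$ does not close. The sharpest available bound for the source is obtained by combining Lemmas \ref{lemma2.6}--\ref{lemma2.7} with \eqref{2.101} and Lemma \ref{lemma2.16}, yielding $(t')^{s-1}\|V(s)M(-t')u(t')\|_{L^{r}_{x}}\lesssim (t')^{s-1+\theta-n/2}\bigl(\|\arrowvert J_{A}\arrowvert^{\frac{n}{2}-\theta}u\|_{L^{\infty}_{t}L^{2}_{x}}+\cdots\bigr)$, and since $s>\frac{n}{2}$ the exponent $s-1+\theta-\tfrac n2 > -1$, so this is \emph{not} integrable in $L^{1}_{t'}(1,\infty)$. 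The paper escapes precisely by using the inhomogeneous Strichartz estimate with dual exponents $(q,r)$, $r<2$, $q>1$ (e.g.\ $q=\tfrac{4}{4-n}$, $r=1$ for $n\leq3$), for which membership of $(t')^{s-1+\theta-n/2}$ in $L^{q}_{t'}(1,\infty)$ only requires $s<\tfrac{3n}{4}-\theta$, which is compatible with $s>\tfrac n2$. Your $L^{1}_{t'}L^{2}_{x}$ route has no such room.

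Two further points. First, the $V(s)$ estimates map $\dot H^{\alpha}_{x}\to L^{r}_{x}$ (not $L^{2}_{x}\to L^{2}_{x}$), so one cannot bound $\|V(s)M(-t')u\|$ by $\|M(-t')u(t')\|_{L^{2}_{x}}=\|u_{0}\|_{L^{2}_{x}}$ as you suggest; the correct chain through $\|M(-t')u\|_{\dot H^{\alpha}_{x}}=(t')^{-\alpha}\|\arrowvert J\arrowvert^{\alpha}u\|_{L^{2}_{x}}$ and Lemma \ref{lemma2.16} brings the Strichartz norm $\|\arrowvert J_{A}\arrowvert^{s}u\|_{L^{\infty}_{t}L^{2}_{x}}$ back onto the right-hand side, so a smallness/absorption (bootstrap) step is needed to close \eqref{2.107}. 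Second, the equivalence $\|(-\Delta_{A})^{s/2}v\|_{L^{2}}\sim\|(-\Delta)^{s/2}v\|_{L^{2}}$ of Lemma \ref{lemma2.13} is proved only for $0<s<\tfrac n2$, whereas Theorem \ref{theorem1.1} requires $s>\tfrac n2$; the paper controls $\|\arrowvert J_{A}\arrowvert^{s}u(1)\|_{L^{2}_{x}}$ via the one-sided ``almost equivalence'' of Lemma \ref{lemma2.16} (estimate \eqref{2.109}), not via Lemma \ref{lemma2.13}.
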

\begin{remark}\label{remark1.2}
	The space of the initial value $\|u_{0}\|_{\Sigma_{s}}$ is  intersection of some common Sobolev spaces. See Appendix B for details.
\end{remark}

In Section 3, we investigate the dispersive decay for the nonlinear magnetic Schr\"odinger equation \eqref{1.1}. The method used to prove the dispersive bound is similar to the approach employed in dealing with the linear Schr\"odinger equation in Section 2. However, the main difference lies in the estimation of the nonlinear term $\arrowvert J_{A}\arrowvert^{s}(\arrowvert u\arrowvert^{p-1}u)$, which is obtained through an iterative method applied to the relationship between $\arrowvert J\arrowvert^{s}$ and $\arrowvert J_{A}\arrowvert^{s}$. As a result, we obtain the following second result:
\begin{theorem}\label{theorem1.2}
	Let $A(x)$ satisfy hypothesis \eqref{1.2} and $p>1+\lceil\frac{n}{2}\rceil$. Then for any $s>\frac{n}{2}$, and $\|u_{0}\|_{\Sigma_{s}}$ is sufficiently small, there exists a constant $C>0$ such that the solution of equation \eqref{1.1} satisfies the following dispersive decay
	\begin{equation}\label{1.11}
		\|u\|_{L^{\infty}_{x}(\mathbb{R}^{n})}\leqslant Ct^{-\frac{n}{2}}\|u_{0}\|_{\Sigma_{s}},\  t\geqslant1.
	\end{equation}
\end{theorem}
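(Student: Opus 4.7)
The overall strategy is a bootstrap/continuity argument on the quantity
\[
X(T):=\sup_{1\le t\le T}\Bigl(t^{n/2}\|u(t)\|_{L^\infty_x}+\||J_A|^s u(t)\|_{L^2_x}\Bigr),
\]
showing that for $\|u_0\|_{\Sigma_s}$ small enough one has $X(T)\le C\|u_0\|_{\Sigma_s}$ uniformly in $T\ge 1$. Because $|u|^{p-1}u$ is phase equivariant, equation \eqref{1.1} conserves mass, so $\|u(t)\|_{L^2_x}=\|u_0\|_{L^2_x}\lesssim\|u_0\|_{\Sigma_s}$ for free. Combining mass conservation with the dispersive inequality \eqref{1.8} and the equivalence $\|(-\Delta)^{s/2}v\|_{L^2}\sim\|(-\Delta_A)^{s/2}v\|_{L^2}$ from Section~2 gives
\[
t^{n/2}\|u(t)\|_{L^\infty_x}\lesssim\|u(t)\|_{L^2_x}^{1-\frac{n}{2s}}\||J_A|^s u(t)\|_{L^2_x}^{\frac{n}{2s}}\lesssim\|u_0\|_{\Sigma_s}^{1-\frac{n}{2s}}X(T)^{\frac{n}{2s}},
\]
so the $L^\infty$ part of $X(T)$ is automatic once the $|J_A|^s$ part is controlled, reducing the problem to an estimate of $\||J_A|^s u(t)\|_{L^2_x}$.

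To bound this, I would apply $|J_A|^s$ to \eqref{1.1} and combine with the commutator identity \eqref{1.9} to obtain
\[
(i\partial_t+\Delta_A)|J_A|^s u = it^{s-1}M(t)V(s)M(-t)u + |J_A|^s(|u|^{p-1}u).
\]
Duhamel's formula plus the $L^2$-unitarity of $e^{it\Delta_A}$ then yield
\[
\||J_A|^s u(t)\|_{L^2}\le \||J_A|^s u(1)\|_{L^2}+\int_1^t \tau^{s-1}\|V(s)M(-\tau)u\|_{L^2}\,d\tau+\int_1^t\bigl\||J_A|^s(|u|^{p-1}u)\bigr\|_{L^2}\,d\tau.
\]
The initial term is $\lesssim\|u_0\|_{\Sigma_s}$ by the definition of $\Sigma_s$ and the norm equivalence. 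For the $V(s)$ remainder I would directly invoke the Section~2 estimate, which gives an integrable-in-$\tau$ bound of the form $\tau^{-1-\delta}\bigl(\||J_A|^s u\|_{L^2}+\|u\|_{L^2}\bigr)$ and is thus absorbed into $C\varepsilon X(T)$.

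The real work is the nonlinear integrand. Because $|u|^{p-1}u$ commutes with the phase multiplication by $M(\pm t)$, the non-magnetic analogue
\[
|J|^s(|u|^{p-1}u)=t^s M(t)(-\Delta)^{s/2}\bigl(|M(-t)u|^{p-1}M(-t)u\bigr)
\]
together with a fractional Leibniz/chain rule (valid since $p>1+\lceil n/2\rceil\ge 1+s$ once $s$ is chosen just above $n/2$) yields $\||J|^s(|u|^{p-1}u)\|_{L^2}\lesssim\|u\|_{L^\infty_x}^{p-1}\||J|^s u\|_{L^2}$. Following the iterative scheme sketched by the authors, I would then expand $|J_A|^s-|J|^s$ through the distorted Fourier/resolvent identities of Section~2, each correction carrying one factor of $A$ or $\nabla A$ whose decay $\langle x\rangle^{-\beta}$ with $\beta>n+2$ compensates the weight $|x|^2/t$ produced by the conjugation by $M(\pm t)$. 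The outcome should be
\[
\bigl\||J_A|^s(|u|^{p-1}u)\bigr\|_{L^2}\lesssim\|u\|_{L^\infty_x}^{p-1}\bigl(\||J_A|^s u\|_{L^2}+\|u\|_{L^2}\bigr).
\]
Plugging in the bootstrap assumption $\|u(\tau)\|_{L^\infty_x}\le X(T)\tau^{-n/2}$, the nonlinear integral is dominated by $X(T)^p\int_1^\infty\tau^{-n(p-1)/2}\,d\tau$, finite because $p>1+\lceil n/2\rceil$ forces $n(p-1)/2>1$ in every dimension $n\ge 2$. Collecting all terms yields an inequality $X(T)\le C\|u_0\|_{\Sigma_s}+CX(T)^p+C\varepsilon X(T)$; a standard continuity argument then closes the bootstrap and produces \eqref{1.11}.

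The hard part, where I expect to concentrate most of the effort, is the transfer from $|J|^s$ to $|J_A|^s$ on the nonlinearity: one must organize the difference $|J_A|^s-|J|^s$ so that each correction term can be estimated with the gain $\|u\|_{L^\infty_x}^{p-1}$ intact, using the magnetic decay $\beta>n+2$ to cancel the $|x|^2/t$ weight hidden in $M(\pm t)$. This is precisely the iterative procedure the authors refer to, and it is the step where the hypothesis \eqref{1.2} plays its essential role.
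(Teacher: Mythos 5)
Your overall architecture coincides with the paper's (commutator identity for $\arrowvert J_{A}\arrowvert^{s}$, a Duhamel/Strichartz bound for $\|\arrowvert J_{A}\arrowvert^{s}u\|_{L^\infty_tL^2_x}$, a fractional chain rule for $\arrowvert u\arrowvert^{p-1}u$, transfer between $\arrowvert J\arrowvert^{s}$ and $\arrowvert J_{A}\arrowvert^{s}$, the interpolation inequality \eqref{1.8}, and a continuity argument), but there is a genuine gap in your treatment of the linear source term $it^{s-1}M(t)V(s)M(-t)u$. You estimate it in $L^1_t L^2_x$ and assert an integrable bound of the form $\tau^{-1-\delta}\bigl(\|\arrowvert J_{A}\arrowvert^{s}u\|_{L^2}+\|u\|_{L^2}\bigr)$; the Section~2 estimates do not give this. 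Lemma \ref{lemma2.6} (with $r=2$) yields $\|V(s)M(-t)u\|_{L^2}\leqslant C\|M(-t)u\|_{\dot H^{\alpha}}=Ct^{-\alpha}\|\arrowvert J\arrowvert^{\alpha}u\|_{L^2}$, and converting back to magnetic norms via Lemma \ref{lemma2.16} costs a factor $t^{\alpha+\theta-\frac n2}$ (interpolating through $\|u\|_{L^2}$ does not help: the best exponent one can reach is $s-1+\frac{\alpha}{s}(\theta-\frac n2)$, which is still $>-1$ because $\alpha<1$ and $s>\frac n2$). So the integrand decays like $t^{\,s-1+\theta-\frac n2}$, and since $s>\frac n2$ this exponent exceeds $-1$: the $L^1_t$ integral diverges and your bootstrap does not close. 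This is precisely why the paper estimates this term in the dual Strichartz norm $L^{q}_tL^{r}_x$ with $r<2$ (Lemmas \ref{lemma2.17}, \ref{lemma2.18}), where only $t^{\,s-1+\theta-\frac n2}\in L^{q}_t(1,\infty)$ with $q=\frac{4}{4-n}$ (resp.\ a suitable $q<\infty$ for $n>3$) is needed, at the price of the additional restrictions $s<\frac{3n}{4}-\theta$, resp.\ $2k+1<s<2k+2$.

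A secondary overstatement: your claimed time-uniform bound $\|\arrowvert J_{A}\arrowvert^{s}(\arrowvert u\arrowvert^{p-1}u)\|_{L^2}\lesssim\|u\|_{L^\infty}^{p-1}\bigl(\|\arrowvert J_{A}\arrowvert^{s}u\|_{L^2}+\|u\|_{L^2}\bigr)$ ignores that each passage between $\arrowvert J\arrowvert^{s}$ and $\arrowvert J_{A}\arrowvert^{s}$ through Lemma \ref{lemma2.16} loses a factor $t^{\,s+\theta-\frac n2}$ (a growing power). The paper's Lemma \ref{lemma3.2} keeps these losses and beats them with the $t^{-\frac n2(p-1)}$ decay supplied by Lemma \ref{lemma2.15}; the resulting integrability condition \eqref{3.21}, namely $2s+2\theta-n+\frac{n(p-1)}{2s}(\theta-\frac n2)<-1$, is where $p>1+\lceil\frac n2\rceil$ is actually used — your criterion ``$\frac{n(p-1)}{2}>1$'' is not sufficient. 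This part of your argument is repairable along the paper's lines, but the $L^1_tL^2_x$ treatment of the $V(s)$ term, as written, fails.
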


\phantomsection
\addcontentsline{toc}{section}{Notation}
\hspace{-5mm}\textbf{Notation}:
\begin{itemize}
	\setlength{\baselineskip}{1.5\baselineskip}
	
	\item $\mathscr{S}(\mathbb{R}^{n})$ is Schwartz space, i.e. the set of all real- or complex-valued $C^{\infty}$ functions on $\mathbb{R}^{n}$ such that for every nonnegative integer $\kappa$ and every multi-index $\alpha$, $\sup\limits_{x\in\mathbb{R}^{n}}(1+\arrowvert x\arrowvert^{2})^{\frac{\kappa}{2}}\arrowvert D^{\alpha}u(x)\arrowvert<\infty$; $\mathscr{S}'(\mathbb{R}^{n})$ is space of tempered distribution on $\mathbb{R}^{n}$, i.e. the topological dual of $\mathscr{S}(\mathbb{R}^{n})$.

	\item $\mathscr{F}u(x)=(2\pi)^{-\frac{n}{2}}\int_{\mathbb{R}^{n}}e^{-ix\cdot \xi}u(\xi)d\xi$; $(\mathscr{F}^{-1}u)(x)=(2\pi)^{-\frac{n}{2}}\int_{\mathbb{R}^{n}}e^{ix\cdot \xi}u(\xi)d\xi$.
	
	\item
	$H^{s}(\mathbb{R}^{n})=\{u\in L^{2}(\mathbb{R}^{n}):(1+\arrowvert\xi\arrowvert^{2})^{\frac{s}{2}}\hat{u}\in L^{2}(\mathbb{R}^{n})\}$; $\dot{H}^{s}(\mathbb{R}^{n})=\{u\in L^{2}(\mathbb{R}^{n}):\arrowvert\xi\arrowvert^{s}\hat{u}\in L^{2}(\mathbb{R}^{n})\}$; $\dot{H}_{A}^{s}(\mathbb{R}^{n})=\{u\in L^{2}(\mathbb{R}^{n}):(\mathscr{F}(-\Delta_{A})^{\frac{s}{2}})\cdot\hat{u}\in L^{2}(\mathbb{R}^{n})\}$.

	\item $L^{2,\sigma}(\mathbb{R}^{n})=\{u(x):(1+\arrowvert x\arrowvert^{2})^{\frac{\sigma}{2}}u(x)\in L^{2}(\mathbb{R}^{n})\}$; $H^{\kappa,\sigma}(\mathbb{R}^{n})=\{u(x):D^{\alpha}u\in L^{2,\sigma}(\mathbb{R}^{n}), 0\leqslant\arrowvert\alpha\arrowvert\leqslant\kappa\}.$
	
	\item
	$\dot{B}_{p,q}^{s}(\mathbb{R}^{n})=\{u\in\mathscr{S}'(\mathbb{R}^{n}): \|u\|_{\dot{B}_{p,q}^{s}}<\infty\}$, with $\|u\|_{\dot{B}_{p,q}^{s}}=\mathop{\sup}\limits_{j\in\mathbb{Z}}2^{sj}\|\mathscr{F}^{-1}(\varphi_{j}\hat{u})\|_{L^{p}}$ when $q=\infty$,  
	$\|u\|_{\dot{B}_{p,q}^{s}}=(\sum\limits_{j=-\infty}^{+\infty}(2^{sj}\|\mathscr{F}^{-1}(\varphi_{j}\hat{u})\|_{L^{p}})^{q})^{\frac{1}{q}}$ when $q<\infty$,  where $\varphi_{j}(\xi)=\eta(\frac{\xi}{2^{j}})-\eta(\frac{\xi}{2^{j-1}})$, $\eta(\xi)=1$ if $\arrowvert\xi\arrowvert\leqslant1$ and $\eta(\xi)=0$ if $\arrowvert\xi\arrowvert\geqslant2$,  $s\in\mathbb{R}$ and $1\leqslant p,q\leqslant\infty$.
	
	\item	
	$\lceil k\rceil$, where $k\in\mathbb{R}$,  means to take an integer upward, i.e. the smallest integer greater than this number.
	
	\item $[\mathbb{A},\mathbb{B}]:=\mathbb{A}\mathbb{B}-\mathbb{B}\mathbb{A}$, $\mathbb{A}$ and $\mathbb{B}$ are any two operators.
	
	\item $\|\mathbb{A}u\|_{L^{2}(\mathbb{R}^{n})}\sim\|\mathbb{B}u\|_{L^{2}(\mathbb{R}^{n})}$ means that there exist constants $C_{1}$, $C_{2}$ such that $\|\mathbb{A}u\|_{L^{2}}\leqslant C_{1}\|\mathbb{B}u\|_{L^{2}}$ and $\|\mathbb{B}u\|_{L^{2}}\leqslant C_{2}\|\mathbb{A}u\|_{L^{2}}$, where $\mathbb{A}$ and $\mathbb{B}$ are any two operators, $u\in\mathscr{S}(\mathbb{R}^{n})$.
	
	\item
	$\arrowvert J(t)\arrowvert^{s}=M(t)(-t^{2}\Delta)^{\frac{s}{2}}M(-t)$,  where $M(t)=e^{\frac{i\arrowvert x\arrowvert^{2}}{4t}}$, $s>0$;
	$\arrowvert J_{A}(t)\arrowvert^{s}=M(t)(-t^{2}\Delta_{A})^{\frac{s}{2}}M(-t)$, where $M(t)=e^{\frac{i\arrowvert x\arrowvert^{2}}{4t}}$, $s>0$.
	
	\item
	$R_{0}(\tau)=(\tau-\Delta)^{-1}$,  $R_{0}^{s}(\tau)=(\tau+(-\Delta)^{\frac{s}{2}})^{-1}$ for $\tau>0$, $s>0$.
	
	\item
	$R_{A}(\tau)=(\tau-\Delta_{A})^{-1}$, $R_{A}^{s}(\tau)=(\tau+(-\Delta_{A})^{\frac{s}{2}})^{-1}$ for $\tau>0$, $s>0$.

	\item $V_{x}=(-\Delta_{A})^{\frac{s}{2}}-(-\Delta)^{\frac{s}{2}}$, $s>0$.
	
	\item
	$F^{A}u(\xi)=\mathscr{F}((I+V_{x}R_0^{s}(\tau ))^{-1}u)(\xi)$ for $\tau>0$, $s>0$;
	$F_\pm^{A}u(\xi)=\mathscr{F}((I+V_{x}R_0^{s}(\lambda\pm i0 ))^{-1}u)(\xi)$ almost everywhere in $M_\lambda=\{\xi\in\mathbb{R}^n;~\arrowvert\xi\arrowvert^s=\lambda\}$.
	
	\item
	$\mathscr{H}_{0}$ is the self-adjoint realization of $(-\Delta)^{\frac{s}{2}}$ for any $s>0$ in $L^{2}(\mathbb{R}^{n})$;
	$\mathscr{H}$ is the self-adjoint realization of $(-\Delta_{A})^{\frac{s}{2}}$ for any $s>0$ in $L^{2}(\mathbb{R}^{n})$.
	
\end{itemize}

\bigskip
\section{The dispersive decay of the linear magnetic Schr\"{o}dinger equation}

In this section, we prove Theorem \ref{theorem1.1}. The main tools are the distorted Fourier transforms and the Strichartz estimates. Combining the interwining properties of the distorted Fourier transforms and the estimates for commutators of operators, we obtain the decay bound. Before that, we need some preparations.

\subsection{Preliminaries}
First we introduce the dilation operator and the multiplier operator for every $t>0$
\begin{equation*}
	D(t)u(x)=(2it)^{-\frac{n}{2}}u(\frac{x}{2t}),
\end{equation*}
\begin{equation}\label{2.1}
	M(t)u(x)=e^{\frac{i\arrowvert x\arrowvert^{2}}{4t}}u(x),
\end{equation}
then
\begin{equation*}
	e^{it\Delta}=M(t)D(t)\mathscr{F}^{-1}M(t).
\end{equation*}
Furthermore, we have
\begin{equation*}
	e^{it\Delta}x_{j}e^{-it\Delta}=M(t)2ti\partial x_{j}M(-t).
\end{equation*}
Therefore, we introduce the operators
\begin{equation*}
	J_{j}=M(t)2ti\partial x_{j}M(-t)=2tie^{\frac{i\arrowvert x\arrowvert^{2}}{4t}}\partial x_{j}e^{-\frac{i\arrowvert x\arrowvert^{2}}{4t}}=2ti\partial x_{j}+x_{j},
\end{equation*}
and we have
$$[i\partial_{t}+\Delta,J_{j}]=0,$$
where $j=1,...,n$.

By direct calculation, it follows that
\begin{flalign}
	&[i\partial_{t}+\Delta,M(t)]=M(t)(\frac{ni}{2t}+\frac{ix\cdot \nabla}{t}),&\nonumber\\ &[i\partial_{t}+\Delta,M(-t)]=M(-t)(-\frac{ni}{2t}-\frac{\arrowvert x\arrowvert^{2}}{2t^{2}}-\frac{ix\cdot \nabla}{t});&\label{2.2}
\end{flalign}
and
\begin{flalign}
	&[i\nabla\cdot A,M(t)]=[iA\cdot\nabla,M(t)]=-\frac{x\cdot A}{2t}M(t),\ [\arrowvert A\arrowvert^{2},M(t)]=0,&\nonumber\\ &[i\partial_{t}+\Delta_{A},(-t^{2}\Delta_{A})^{\frac{s}{2}}]=\frac{is}{t}(-t^{2}\Delta_{A})^{\frac{s}{2}}.&\label{2.3}
\end{flalign}
Applying \eqref{2.2} and \eqref{2.3}, we obtain that
\begin{flalign}\label{2.4}
	&[i\partial_{t}+\Delta_{A},M(t)]\nonumber\\
	=&\frac{\arrowvert x\arrowvert^{2}}{4t^{2}}M(t)+M(t)(\frac{ni}{2t}-\frac{\arrowvert x\arrowvert^{2}}{4t^{2}}+\frac{ix\cdot \nabla}{t})-\frac{x\cdot A}{t}M(t).
\end{flalign}
Similarly, we have
\begin{flalign}\label{2.5}
	&[i\partial_{t}+\Delta_{A},M(-t)]\nonumber\\
	=&-\frac{\arrowvert x\arrowvert^{2}}{4t^{2}}M(-t)+M(-t)(-\frac{ni}{2t}-\frac{\arrowvert x\arrowvert^{2}}{4t^{2}}-\frac{ix\cdot \nabla}{t})+\frac{x\cdot A}{t}M(-t).
\end{flalign}

Next we introduce the following two operators for any $s>0$
\begin{equation}\label{2.6}
	\arrowvert J(t)\arrowvert^{s}:=M(t)(-t^{2}\Delta)^{\frac{s}{2}}M(-t),
\end{equation}
\begin{equation}\label{2.7}
	\arrowvert J_{A}(t)\arrowvert^{s}:=M(t)(-t^{2}\Delta_{A})^{\frac{s}{2}}M(-t).
\end{equation}
\begin{lemma}\label{lemma2.1}
	For any $s>0$ and $t>0$, we have
	$$[i\partial_{t}+\Delta_{A},\arrowvert J_{A}(t)\arrowvert^{s}]=it^{s-1}M(t)V(s)M(-t),$$
	where
	\begin{equation}\label{2.8}
		V(s)=s(-\Delta_{A})^{\frac{s}{2}}+[x\cdot \nabla,(-\Delta_{A})^{\frac{s}{2}}]-i[(-\Delta_{A})^{\frac{s}{2}},x\cdot A].
	\end{equation}
\end{lemma}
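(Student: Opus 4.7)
The plan is to exploit the conjugation structure $|J_A(t)|^s = M(t) H M(-t)$, where $H := (-t^2\Delta_A)^{s/2} = t^s(-\Delta_A)^{s/2}$, combined with the key identity $M(t)M(-t) = I$ (both are multiplications by mutually inverse complex exponentials). Setting $P := i\partial_t + \Delta_A$, the Leibniz rule together with $M(-t)M(t)=I$ yields the clean factorization
$$[P,\, M(t) H M(-t)] = M(t)\bigl[M(-t) P M(t),\, H\bigr] M(-t).$$
Writing $M(-t) P M(t) = P + M(-t)[P,M(t)]$ reduces the problem to (i) the known commutator $[P,H]$ given by \eqref{2.3}, and (ii) a commutator of $H$ with the remainder $M(-t)[P,M(t)]$.

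To process step (ii) I would invoke \eqref{2.4}. Since the scalar functions $|x|^2/(4t^2)$ and $x\cdot A/t$ commute with the multiplication operator $M(t)$, one may pull $M(t)$ to the left in every term of \eqref{2.4}, and the two $|x|^2/(4t^2)$ contributions cancel. This gives
$$[P, M(t)] = \tfrac{1}{t}\, M(t)\, L, \qquad L := \tfrac{ni}{2} + ix\cdot\nabla - x\cdot A,$$
so that $M(-t)[P, M(t)] = L/t$. Combining with $[P, H] = \tfrac{is}{t} H$ and $H = t^s (-\Delta_A)^{s/2}$ gives
$$\bigl[M(-t) P M(t),\, H\bigr] = \tfrac{is}{t} H + \tfrac{t^s}{t}\bigl[L,\, (-\Delta_A)^{s/2}\bigr] = t^{s-1}\Bigl(is(-\Delta_A)^{s/2} + \bigl[L,\, (-\Delta_A)^{s/2}\bigr]\Bigr).$$
Linearity of the bracket and the fact that the constant $ni/2$ commutes with $(-\Delta_A)^{s/2}$ give $[L, (-\Delta_A)^{s/2}] = i\,[x\cdot\nabla, (-\Delta_A)^{s/2}] + [(-\Delta_A)^{s/2}, x\cdot A]$. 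Factoring $i$ across the sum produces exactly $iV(s)$, and sandwiching between $M(t)$ and $M(-t)$ yields the claimed identity.

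The main obstacle is really a bookkeeping point: one must verify that the two $|x|^2/(4t^2)$ contributions in \eqref{2.4} cancel, so that $[P,M(t)]$ factors as $M(t)$ times a first-order operator $L$ that is \emph{independent} of $t$. If this cancellation failed, the resulting commutator against $(-\Delta_A)^{s/2}$ would introduce a quadratic-in-$x$ term foreign to $V(s)$, and the clean time-scaling $t^{s-1}$ in the final formula would not emerge. Once this cancellation is in place, the rest of the argument is direct algebraic manipulation; no analytic input beyond the already-established identities \eqref{2.3}--\eqref{2.5} is required.
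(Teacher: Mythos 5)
Your proof is correct and arrives at the paper's result through an equivalent computation. The paper expands $[i\partial_t+\Delta_A,\, M(t)(-t^2\Delta_A)^{s/2}M(-t)]$ via the Leibniz rule into three terms and substitutes \eqref{2.3}, \eqref{2.4} and \eqref{2.5}; your conjugation identity $[P, MHM^{-1}] = M\,[M^{-1}PM, H]\,M^{-1}$ together with $M^{-1}PM = P + M^{-1}[P,M]$ is a regrouping of that same Leibniz expansion. What the regrouping buys you is a cleaner bookkeeping: only $[P,M(t)]$ is needed (the paper's separate identity \eqref{2.5} for $[P,M(-t)]$ becomes unnecessary), the cancellation of the $\frac{|x|^2}{4t^2}$ contributions and the emergence of the $t$-independent first-order operator $L=\frac{ni}{2}+ix\cdot\nabla - x\cdot A$ happen in one place, and the $t^{s-1}$ scaling drops out immediately from $H = t^s(-\Delta_A)^{s/2}$. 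The underlying algebra is the same as the paper's; yours is a tidier arrangement of it.
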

\begin{proof}
	Applying \eqref{2.4} and\eqref{2.5}, it follows that
	\begin{equation*}
		\begin{split}
			&[i\partial_{t}+\Delta_{A},\arrowvert J_{A}(t)\arrowvert^{s}]\\
			=&[i\partial_{t}+\Delta_{A},M(t)](-t^{2}\Delta_{A})^{\frac{s}{2}}M(-t)+M(t)[i\partial_{t}+\Delta_{A},(-t^{2}\Delta_{A})^{\frac{s}{2}}]M(-t)\\
			&+M(t)(-t^{2}\Delta_{A})^{\frac{s}{2}}[i\partial_{t}+\Delta_{A},M(-t)]\\			
			=&\frac{ni}{2t}\arrowvert J_{A}(t)\arrowvert^{s}+\frac{i}{t}M(t)x\cdot \nabla(-t^{2}\Delta_{A})^{\frac{s}{2}}M(-t)-\frac{x\cdot A}{t}M(t)(-t^{2}\Delta_{A})^{\frac{s}{2}}M(-t)\\
			&+\frac{is}{t}\arrowvert J_{A}(t)\arrowvert^{s}-\frac{ni}{2t}\arrowvert J_{A}(t)\arrowvert^{s}-\frac{i}{t}M(t)(-t^{2}\Delta_{A})^{\frac{s}{2}}M(-t)x\cdot\nabla\\
			&-M(t)(-t^{2}\Delta_{A})^{\frac{s}{2}}M(-t)\frac{\arrowvert x\arrowvert^{2}}{2t^{2}}+M(t)(-t^{2}\Delta_{A})^{\frac{s}{2}}\frac{x\cdot A}{t}M(-t)\\
			&+M(t)[(-t^{2}\Delta_{A})^{\frac{s}{2}},\frac{x\cdot A}{t}]M(-t)\\
		=&\frac{is}{t}\arrowvert J_{A}(t)\arrowvert^{s}+\frac{i}{t}M(t)[x\cdot \nabla,(-t^{2}\Delta_{A})^{\frac{s}{2}}]M(-t)+M(t)[(-t^{2}\Delta_{A})^{\frac{s}{2}},\frac{x\cdot A}{t}]M(-t)\\
			=&it^{s-1}(sM(t)(-\Delta_{A})^{\frac{s}{2}}M(-t)+M(t)[x\cdot \nabla,(-\Delta_{A})^{\frac{s}{2}}]M(-t)\\ &-iM(t)[(-\Delta_{A})^{\frac{s}{2}},x\cdot A]M(-t))\\
			=&it^{s-1}M(t)V(s)M(-t).
		\end{split}
	\end{equation*}
	This completes the proof of Lemma \ref{lemma2.1}.
\end{proof}
For operator $V(s)$, we will show an integral representation. To do this, we need the following lemmas:

\begin{lemma}\label{lemma2.2}
	Let $A(x)$ satisfy hypothesis \eqref{1.2}, and
	\begin{equation}\label{2.9}
		V_{1}:=(2+x\cdot\nabla)\arrowvert A\arrowvert^{2}-2i\nabla\cdot A-2ix\cdot(\nabla A)\cdot\nabla-ix\cdot(\nabla\nabla A),
	\end{equation}
	where $x\cdot(\nabla A)\cdot\nabla u=\sum\limits_{k,j=1}^{n}(x_{k}(\partial_{x_{k}}A_{k})\partial_{x_{k}}u+x_{k}(\partial_{x_{k}}A_{j})\partial_{x_{j}}u)$, and $x\cdot(\nabla\nabla A)u=\sum\limits_{k,j=1}^{n}(x_{k}(\partial^{2}_{x_{k}x_{k}}A_{k})u+x_{k}(\partial^{2}_{x_{k}x_{j}}A_{j})u)$. Then for any $\tau>0$, we have
	\begin{flalign*}
		&[x\cdot\nabla,(-\Delta_{A})(\tau-\Delta_{A_{\varepsilon}})^{-1}]\\
		=&2\tau\Delta_{A}(\tau-\Delta_{A})^{-2}+\tau(\tau-\Delta_{A})^{-1}V_{1}(\tau-\Delta_{A})^{-1}.
	\end{flalign*}
\end{lemma}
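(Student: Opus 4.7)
The plan is to collapse the left-hand side into a single resolvent sandwich and then to compute one commutator with $\Delta_A$ directly. Writing $-\Delta_A = (\tau - \Delta_A) - \tau$ gives the algebraic identity
$$(-\Delta_A)(\tau - \Delta_A)^{-1} = I - \tau(\tau - \Delta_A)^{-1},$$
so the left-hand side reduces at once to $-\tau\,[x\cdot\nabla, (\tau - \Delta_A)^{-1}]$. Applying the resolvent commutator formula $[X, B^{-1}] = -B^{-1}[X, B]B^{-1}$ with $X = x\cdot\nabla$ and $B = \tau - \Delta_A$, I obtain
$$[x\cdot\nabla, (-\Delta_A)(\tau - \Delta_A)^{-1}] = -\tau\,(\tau - \Delta_A)^{-1}[x\cdot\nabla, \Delta_A](\tau - \Delta_A)^{-1}.$$

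The heart of the argument is then to establish $[x\cdot\nabla, \Delta_A] = -2\Delta_A - V_1$. To do this I would decompose $\Delta_A = \Delta + 2iA\cdot\nabla + i\,\mathrm{div}\,A - |A|^2$, where $\mathrm{div}\,A$ denotes the scalar divergence of $A$, and evaluate the four commutators separately: the classical identity $[x\cdot\nabla, \Delta] = -2\Delta$ from a Leibniz computation; $[x\cdot\nabla, A\cdot\nabla] = x\cdot(\nabla A)\cdot\nabla - A\cdot\nabla$; and the two multiplication commutators $[x\cdot\nabla, \mathrm{div}\,A] = x\cdot\nabla(\mathrm{div}\,A)$ and $[x\cdot\nabla, |A|^2] = x\cdot\nabla|A|^2$. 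Summing, then rewriting the bare Laplacian via $-2\Delta = -2\Delta_A + 4iA\cdot\nabla + 2i\,\mathrm{div}\,A - 2|A|^2$, the first-order contributions combine into the operator $-2i\,\nabla\!\cdot A$ appearing in \eqref{2.9} (transport piece $-2iA\cdot\nabla$ together with multiplication piece $-2i\,\mathrm{div}\,A$), and the remaining terms assemble precisely into the rest of $V_1$.

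Substituting $[x\cdot\nabla, \Delta_A] = -2\Delta_A - V_1$ back into the sandwich and multiplying out gives
$$2\tau\,\Delta_A(\tau - \Delta_A)^{-2} + \tau\,(\tau - \Delta_A)^{-1} V_1 (\tau - \Delta_A)^{-1},$$
which is the claim. The decay hypothesis \eqref{1.2} controls $A$, $\nabla A$ and $\nabla\nabla A$ by $\langle x\rangle^{-\beta}$, so $V_1$ is densely defined on $\mathscr{S}(\mathbb{R}^n)$, and the boundedness of $(\tau - \Delta_A)^{-1}$ on $L^2$ for $\tau > 0$ (from the spectral negativity recorded in \eqref{1.3}) justifies every composition above.

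The main challenge is purely notational bookkeeping. The symbol $\nabla\cdot A$ in the definition of $V_1$ stands for the operator $u \mapsto \nabla\cdot(Au) = (\mathrm{div}\,A)u + A\cdot\nabla u$, which is distinct from the scalar $\mathrm{div}\,A$ appearing in the expansion of $\Delta_A$, and $x\cdot(\nabla\nabla A)$ must be read as $x\cdot\nabla(\mathrm{div}\,A) = \sum_{k,j} x_k\,\partial_{x_k}\partial_{x_j} A_j$. Keeping these conventions straight so that all first-order cross terms cancel and the remaining pieces regroup into exactly the form of $V_1$ in \eqref{2.9} is the only non-mechanical part of the computation.
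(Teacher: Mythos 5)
Your proposal is correct and takes essentially the same route as the paper: the heart in both cases is the commutator identity $[x\cdot\nabla,-\Delta_{A}]=2\Delta_{A}+V_{1}$ (the paper's \eqref{2.10}), obtained by the same term-by-term expansion of $\Delta_{A}$, and your resolvent algebra — writing $(-\Delta_{A})(\tau-\Delta_{A})^{-1}=I-\tau(\tau-\Delta_{A})^{-1}$ and then $[X,B^{-1}]=-B^{-1}[X,B]B^{-1}$ — is just an equivalent repackaging of the paper's Leibniz-rule step, leading to the same sandwich $\tau(\tau-\Delta_{A})^{-1}(2\Delta_{A}+V_{1})(\tau-\Delta_{A})^{-1}=2\tau\Delta_{A}(\tau-\Delta_{A})^{-2}+\tau(\tau-\Delta_{A})^{-1}V_{1}(\tau-\Delta_{A})^{-1}$. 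Your reading of the conventions ($\nabla\cdot A$ as the operator $u\mapsto\nabla\cdot(Au)$ and $x\cdot(\nabla\nabla A)$ as $x\cdot\nabla(\operatorname{div}A)$) is exactly what the paper's identity requires, so the bookkeeping agrees as well.
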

\begin{proof}
	Let us calculate that
	\begin{equation*}
		\begin{split}
			&[x\cdot\nabla,-\Delta_{A}]\\
			=&[x\cdot\nabla,-\Delta]-i[x\cdot\nabla,A\cdot\nabla]-i[x\cdot\nabla,\nabla\cdot A]+[x\cdot\nabla,\arrowvert A\arrowvert^{2}]\\		
			=&2\Delta_{A}-2i\nabla\cdot A+(2+x\cdot\nabla)\arrowvert A\arrowvert^{2}-2ix\cdot(\nabla  A)\cdot\nabla-ix\cdot(\nabla\nabla A).
		\end{split}
	\end{equation*}
	Therefore, according to the notation of $V_{1}$, it follows that
	\begin{equation}\label{2.10}
		[x\cdot\nabla,-\Delta_{A}]=2\Delta_{A}+V_{1}.
	\end{equation}
	
	From \eqref{2.10}, we obtain  that
	\begin{flalign*}
		&[x\cdot\nabla,(-\Delta_{A})(\tau-\Delta_{A})^{-1}]\nonumber\\
		=&[x\cdot\nabla,-\Delta_{A}](\tau-\Delta_{A})^{-1}+\Delta_{A}(\tau-\Delta_{A})^{-1}[x\cdot\nabla,-\Delta_{A}](\tau-\Delta_{A})^{-1}\nonumber\\
		=&2\tau\Delta_{A}(\tau-\Delta_{A})^{-2}+\tau(\tau-\Delta_{A})^{-1}V_{1}(\tau-\Delta_{A})^{-1},
	\end{flalign*}
	which completes the proof of Lemma \ref{lemma2.2}.
\end{proof}

\begin{lemma}\label{lemma2.3}
	Let $A(x)$ satisfy hypothesis \eqref{1.2}, and \begin{equation}\label{2.11}
		V_{2}:=2A\cdot\nabla+2x\cdot(\nabla A)\cdot\nabla+2{\rm div} A+x\cdot(\nabla\nabla A)-\arrowvert A\arrowvert^{2}-x\cdot(\nabla A)\cdot A,
	\end{equation}
	where $x\cdot(\nabla A)\cdot Au=\sum\limits_{k,j=1}^{n}(x_{k}A_{k}(\partial_{x_{k}}A_{k})u+x_{k}A_{j}(\partial_{x_{j}}A_{k})u)$. Then for any $\tau>0$, we have
	$$[x\cdot A,(-\Delta_{A})(\tau-\Delta_{A})^{-1}]=\tau(\tau-\Delta_{A})^{-1}V_{2}(\tau-\Delta_{A})^{-1}.$$
\end{lemma}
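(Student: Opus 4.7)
The plan is to follow exactly the blueprint of Lemma \ref{lemma2.2}: reduce the commutator on the left-hand side to the ``bare'' commutator $[x\cdot A,-\Delta_A]$ sandwiched between two resolvents, and then identify that bare commutator with $V_2$ by a direct algebraic calculation on $\mathscr{S}(\mathbb{R}^n)$.

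For the first step I would apply the Leibniz rule
\[
[x\cdot A,(-\Delta_A)(\tau-\Delta_A)^{-1}]=[x\cdot A,-\Delta_A](\tau-\Delta_A)^{-1}+(-\Delta_A)[x\cdot A,(\tau-\Delta_A)^{-1}],
\]
combine it with the standard identity $[B,T^{-1}]=-T^{-1}[B,T]T^{-1}$ applied to $T=\tau-\Delta_A$, and use that $\Delta_A$ commutes with its own resolvent together with $\Delta_A(\tau-\Delta_A)^{-1}=\tau(\tau-\Delta_A)^{-1}-I$, which comes straight from $-\Delta_A=(\tau-\Delta_A)-\tau I$. The two occurrences of $[x\cdot A,-\Delta_A](\tau-\Delta_A)^{-1}$ produced along the way cancel, leaving precisely
\[
[x\cdot A,(-\Delta_A)(\tau-\Delta_A)^{-1}]=\tau(\tau-\Delta_A)^{-1}[x\cdot A,-\Delta_A](\tau-\Delta_A)^{-1}.
\]
This is the same resolvent juggling that produced the $V_1$ term in Lemma \ref{lemma2.2}; the additional ``$2\tau\Delta_A(\tau-\Delta_A)^{-2}$'' piece is absent here because, unlike in $[x\cdot\nabla,-\Delta_A]=2\Delta_A+V_1$, the commutator $[x\cdot A,-\Delta_A]$ carries no multiple of $\Delta_A$.

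For the second step I would expand $-\Delta_A=-\Delta-iA\cdot\nabla-i\nabla\cdot A+\arrowvert A\arrowvert^2$ and commute $x\cdot A$ through each summand using the three elementary identities $[g,-\Delta]=2\nabla g\cdot\nabla+\Delta g$, $[g,B\cdot\nabla]=-B\cdot\nabla g$, and $[g,\nabla\cdot B]=-B\cdot\nabla g$, valid for any scalar $g$ and vector field $B$, with $g=x\cdot A$ and $B=A$. The $\arrowvert A\arrowvert^2$ piece of $-\Delta_A$ commutes trivially with $x\cdot A$. Computing $\nabla(x\cdot A)_k=A_k+\sum_j x_j\partial_{x_k}A_j$, $\Delta(x\cdot A)=2\,{\rm div}\,A+\sum_j x_j\Delta A_j$, and $A\cdot\nabla(x\cdot A)=\arrowvert A\arrowvert^2+x\cdot(\nabla A)\cdot A$, and then collecting everything, should reproduce the expression for $V_2$ in \eqref{2.11}.

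The hard part, though in a very mundane sense, will be this final reassembly: matching the index conventions of the multilinear symbols $x\cdot(\nabla A)\cdot\nabla$, $x\cdot(\nabla\nabla A)$, and $x\cdot(\nabla A)\cdot A$ fixed in \eqref{2.11}, and correctly tracking the factors of $i$ that arise when the two first-order pieces $-iA\cdot\nabla$ and $-i\nabla\cdot A$ of $-\Delta_A$ are commuted against the real multiplication operator $x\cdot A$. Hypothesis \eqref{1.2} enters only tacitly, ensuring that the second derivatives of $A$ appearing in $V_2$ are bounded and decay fast enough for every manipulation above to be justified on Schwartz space.
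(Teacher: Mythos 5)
Your plan is the same as the paper's: the paper likewise reduces the left-hand side to $\tau(\tau-\Delta_A)^{-1}[x\cdot A,-\Delta_A](\tau-\Delta_A)^{-1}$ via the Leibniz rule and the identity $I+\Delta_A(\tau-\Delta_A)^{-1}=\tau(\tau-\Delta_A)^{-1}$, and then identifies $[x\cdot A,-\Delta_A]$ with $V_2$ by commuting $x\cdot A$ against each piece of $-\Delta_A=-\Delta-2iA\cdot\nabla-i(\mathrm{div}\,A)+\arrowvert A\arrowvert^2$. One caution on the ``reassembly'' you flag as the hard part: carrying the prefactor $-2i$ on $[x\cdot A,A\cdot\nabla]=-(\arrowvert A\arrowvert^2+x\cdot(\nabla A)\cdot A)$ through gives $+2i\arrowvert A\arrowvert^2+2ix\cdot(\nabla A)\cdot A$, not the $-\arrowvert A\arrowvert^2-x\cdot(\nabla A)\cdot A$ appearing in \eqref{2.11} and in the paper's display for $[x\cdot A,-\Delta_A]$, so the paper's stated $V_2$ seems to have dropped that $-2i$; the resolvent argument is unaffected, but the explicit form of $V_2$ should be corrected.
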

\begin{proof}
	It is easy to calculate that
	\begin{equation*}
		\begin{split}
			&[x\cdot A,-\Delta]=2A\cdot\nabla+2x\cdot(\nabla A)\cdot\nabla+2{\rm div}A+x\cdot(\nabla\nabla A);\\
			&[x\cdot A,A\cdot\nabla]=-\arrowvert A\arrowvert^{2}-x\cdot(\nabla A)\cdot A;\\
			&[x\cdot A,{\rm div}A]=[x\cdot A,\arrowvert A\arrowvert^{2}]=0.
		\end{split}
	\end{equation*}
	Using the commutations above, we deduce that
	\begin{flalign}
		&[x\cdot A,-\Delta_{A}]\nonumber\\
		=&[x\cdot A,-\Delta]-2i[x\cdot A,A\cdot\nabla]-i[x\cdot A,{\rm div}A]+[x\cdot A,\arrowvert A\arrowvert^{2}]\nonumber\\
		=&2A\cdot\nabla+2x\cdot(\nabla A)\cdot\nabla+2{\rm div}A+x\cdot(\nabla\nabla A)-\arrowvert A\arrowvert^{2}-x\cdot(\nabla A)\cdot A.\label{2.12}
	\end{flalign}
	
	According to \eqref{2.12} and the notation of $V_{2}$, we have
	\begin{flalign}
		&[x\cdot A,(-\Delta_{A})(\tau-\Delta_{A})^{-1}]\nonumber\\
		=&[x\cdot A,-\Delta_{A}](\tau-\Delta_{A})^{-1}
		+\Delta_{A}(\tau-\Delta_{A})^{-1}
		[x\cdot A,-\Delta_{A}](\tau-\Delta_{A})^{-1}\nonumber\\
		=&V_{2}(\tau-\Delta_{A})^{-1}+\Delta_{A}(\tau-\Delta_{A})^{-1}V_{2}
		(\tau-\Delta_{A})^{-1}\nonumber\\
		=&\tau(\tau-\Delta_{A})^{-1}V_{2}(\tau-\Delta_{A})^{-1}.\nonumber
	\end{flalign}
	This completes the proof of Lemma \ref{lemma2.3}.
\end{proof}

Combining these lemmas above, we have the following integral representation of $V(s)$.
\begin{lemma}\label{lemma2.4}
	Let $A(x)$ satisfy hypothesis \eqref{1.2}, $V_{1}$ and $V_{2}$ be the operators in \eqref{2.9} and \eqref{2.11} respectively, and $V(s)$ be the operator in \eqref{2.8}. Then for $0<s<2$ and $\tau>0$, we have the integral representation of $V(s)$
	\begin{flalign}\label{2.13}
		V(s)=&c(s)\int^{\infty}_{0}\tau^{\frac{s}{2}}(\tau-\Delta_{A})^{-1}V_{1}(\tau-\Delta_{A})^{-1}d\tau\nonumber\\
		+&ic(s)\int^{\infty}_{0}\tau^{\frac{s}{2}}(\tau-\Delta_{A})^{-1}V_{2}(\tau-\Delta_{A})^{-1}d\tau,
	\end{flalign}
	where $(c(s))^{-1}=\int^{\infty}_{0}\tau^{\frac{s}{2}-1}(\tau+1)^{-1}d\tau$.
\end{lemma}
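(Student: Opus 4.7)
The cornerstone is Balakrishnan's integral representation of fractional powers for a positive self-adjoint operator. Since $-\Delta_A>0$ on $L^{2}(\mathbb{R}^n)$, for $0<s<2$ functional calculus gives
\begin{equation*}
(-\Delta_A)^{s/2}=c(s)\int_0^\infty \tau^{s/2-1}(-\Delta_A)(\tau-\Delta_A)^{-1}\,d\tau,
\end{equation*}
and a rescaling of the scalar identity $\lambda^{s/2}=c(s)\int_0^\infty \tau^{s/2-1}\lambda(\tau+\lambda)^{-1}\,d\tau$ confirms that the normalizing constant agrees with $(c(s))^{-1}=\int_0^\infty \tau^{s/2-1}(\tau+1)^{-1}\,d\tau$. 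The plan is to insert this representation into each of the three summands of $V(s)$ in \eqref{2.8}, pull the commutators inside the integral, apply Lemmas~\ref{lemma2.2} and \ref{lemma2.3} to the integrand, and finally collect the pieces through an integration by parts in $\tau$.

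For the third summand, I would write
\begin{equation*}
-i[(-\Delta_A)^{s/2},x\cdot A]=ic(s)\int_0^\infty \tau^{s/2-1}[x\cdot A,(-\Delta_A)(\tau-\Delta_A)^{-1}]\,d\tau,
\end{equation*}
and substitute Lemma~\ref{lemma2.3}; this reproduces exactly the $V_2$ term of \eqref{2.13}. For the second summand the same manoeuvre combined with Lemma~\ref{lemma2.2} produces two pieces,
\begin{equation*}
2c(s)\int_0^\infty \tau^{s/2}\,\Delta_A(\tau-\Delta_A)^{-2}\,d\tau\;+\;c(s)\int_0^\infty \tau^{s/2}(\tau-\Delta_A)^{-1}V_1(\tau-\Delta_A)^{-1}\,d\tau.
\end{equation*}
The second is the $V_1$ term of \eqref{2.13}, so the task reduces to showing that the first cancels the leading term $s(-\Delta_A)^{s/2}$ in $V(s)$.

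To establish that cancellation I would integrate by parts in $\tau$ using $(\tau-\Delta_A)^{-2}=-\tfrac{d}{d\tau}(\tau-\Delta_A)^{-1}$. The boundary term $\bigl[\tau^{s/2}\Delta_A(\tau-\Delta_A)^{-1}\bigr]_0^\infty$ vanishes precisely in the range $0<s<2$: exploiting $\Delta_A(\tau-\Delta_A)^{-1}=-I+\tau(\tau-\Delta_A)^{-1}$ together with the uniform bound $\|(-\Delta_A)(\tau-\Delta_A)^{-1}\|\leqslant 1$ that is valid for any positive self-adjoint operator, the contribution is $O(\tau^{s/2})$ at $0$ and $O(\tau^{s/2-1})$ at $\infty$, both tending to $0$. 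The surviving interior term $\tfrac{s}{2}\int_0^\infty \tau^{s/2-1}\Delta_A(\tau-\Delta_A)^{-1}\,d\tau$, by Balakrishnan's formula applied once again, equals $-\tfrac{s}{2c(s)}(-\Delta_A)^{s/2}$; multiplying by the prefactor $2c(s)$ yields $-s(-\Delta_A)^{s/2}$, which absorbs the explicit $s(-\Delta_A)^{s/2}$ in $V(s)$.

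The main obstacle is making these formal manipulations rigorous. I would first work on Schwartz functions $u\in\mathscr{S}(\mathbb{R}^n)$, use the decay hypothesis \eqref{1.2} on $A$, $\nabla A$, $\nabla\nabla A$ together with standard resolvent estimates such as $\|(\tau-\Delta_A)^{-1}\|_{L^2\to L^2}\leqslant \tau^{-1}$ and $\|\nabla(\tau-\Delta_A)^{-1}\|_{L^2\to L^2}\lesssim \tau^{-1/2}$ to construct integrable majorants for the $V_1$- and $V_2$-integrals (the delicate region is $\tau\to 0^+$, where $V_1$ and $V_2$ contain the unbounded pieces $A\cdot\nabla$, $x\cdot(\nabla A)\cdot\nabla$, $\nabla\cdot A$), then exchange commutator and integral via Fubini, and close the identity on $L^2$ by density. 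The threshold $0<s<2$ enters in two places, namely in Balakrishnan's formula itself and in the vanishing of the boundary terms in the integration by parts.
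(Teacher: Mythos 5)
Your proposal is correct and follows essentially the same route as the paper: insert the Balakrishnan representation \eqref{2.14} into the commutators in \eqref{2.8}, apply Lemmas \ref{lemma2.2} and \ref{lemma2.3}, and cancel the term $s(-\Delta_{A})^{\frac{s}{2}}$ against $2c(s)\int_{0}^{\infty}\tau^{\frac{s}{2}}\Delta_{A}(\tau-\Delta_{A})^{-2}d\tau$. The only cosmetic difference is that you perform this cancellation by an operator-valued integration by parts in $\tau$, whereas the paper passes through the spectral theorem (its \eqref{2.16}), which reduces to the same scalar identity under the spectral measure.
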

\begin{proof}
	Let us recall the formula (see \cite{I})	
	\begin{equation}\label{2.14}
		(-\Delta_{A})^{\frac{s}{2}}=c(s)(-\Delta_{A})\int^{\infty}_{0}\tau^{\frac{s}{2}-1}(\tau-\Delta_{A})^{-1}d\tau, \ 0<s<2.
	\end{equation}	
	
	Inserting \eqref{2.14} into \eqref{2.8}, we obtain that
	\begin{equation*}
		\begin{split}
			V(s)&=s(-\Delta_{A})^{\frac{s}{2}}+c(s)\int^{\infty}_{0}\tau^{\frac{s}{2}-1}[x\cdot\nabla,(-\Delta_{A})(\tau-\Delta_{A})^{-1}]d\tau\\
			&+ic(s)\int^{\infty}_{0}\tau^{\frac{s}{2}-1}[x\cdot A,(-\Delta_{A})(\tau-\Delta_{A})^{-1}]d\tau.
		\end{split}
	\end{equation*}
	Using results from Lemma \ref{lemma2.2} and Lemma \ref{lemma2.3}, it follows that
	\begin{flalign}\label{2.15}
		V(s)&=s(-\Delta_{A})^{\frac{s}{2}}+2c(s)\int^{\infty}_{0}\tau^{\frac{s}{2}}\Delta_{A}(\tau-\Delta_{A})^{-2}d\tau\nonumber\\
		&+c(s)\int^{\infty}_{0}\tau^{\frac{s}{2}}(\tau-\Delta_{A})^{-1}V_{1}(\tau-\Delta_{A})^{-1}d\tau\nonumber\\
		&+ic(s)\int^{\infty}_{0}\tau^{\frac{s}{2}}(\tau-\Delta_{A})^{-1}V_{2}(\tau-\Delta_{A})^{-1}d\tau.
	\end{flalign}
	For the second term of the right hand side of \eqref{2.15}, we have
	\begin{flalign}\label{2.16}
		&2c(s)\int^{\infty}_{0}\tau^{\frac{s}{2}}\Delta_{A}(\tau-\Delta_{A})^{-2}d\tau\nonumber\\
		=&-2c(s)\int^{\infty}_{0}\lambda\cdot\frac{s}{2}\int^{\infty}_{0}\tau^{\frac{s}{2}-1}(\tau+\lambda)^{-1}d\tau dE_{\lambda}\nonumber\\
		=&-s(-\Delta_{A})^{\frac{s}{2}}.
	\end{flalign}
	
	Hence, combining \eqref{2.15} and \eqref{2.16}, it follows that
	\begin{equation*}
		\begin{split}
			V(s)=&s(-\Delta_{A})^{\frac{s}{2}}-s(-\Delta_{A})^{\frac{s}{2}}+c(s)\int^{\infty}_{0}\tau^{\frac{s}{2}}(\tau-\Delta_{A})^{-1}V_{1}(\tau-\Delta_{A})^{-1}d\tau\\
			&+ic(s)\int^{\infty}_{0}\tau^{\frac{s}{2}}(\tau-\Delta_{A})^{-1}V_{2}(\tau-\Delta_{A})^{-1}d\tau\\
			=&c(s)\int^{\infty}_{0}\tau^{\frac{s}{2}}(\tau-\Delta_{A})^{-1}V_{1}(\tau-\Delta_{A})^{-1}d\tau\\
			&+ic(s)\int^{\infty}_{0}\tau^{\frac{s}{2}}(\tau-\Delta_{A})^{-1}V_{2}(\tau-\Delta_{A})^{-1}d\tau.
		\end{split}
	\end{equation*}
	The proof of Lemma \ref{lemma2.4} is completed.
\end{proof}

Note that $V_{1}$ and $V_{2}$ have similar terms. Hence, we only prove the estimate of $V_{1}$. Now we separate it into two parts:
\begin{flalign}
	V_{1}=&(2+x\cdot\nabla)\arrowvert A\arrowvert^{2}-2i\nabla\cdot A-2ix\cdot(\nabla A)\cdot\nabla -ix\cdot(\nabla\nabla A)\nonumber\\
	=&((2+x\cdot\nabla)\arrowvert A\arrowvert^{2}-ix\cdot(\nabla\nabla A)-2i({\rm div}A))-(2iA\cdot\nabla+2ix\cdot\nabla A\cdot\nabla)\nonumber\\
	:=&\tilde{A}-\tilde{\tilde{ A}}\cdot\nabla,\label{2.17}
\end{flalign}
where $\tilde{A}=(2+x\cdot\nabla)\arrowvert A\arrowvert^{2}-ix\cdot(\nabla\nabla A)-2i({\rm div}A)$ and $\tilde{\tilde{ A}}=2iA+2ix\cdot(\nabla A)$.

Substituting \eqref{2.17} to the following integral representation
$$
c(s)\int^{\infty}_{0}\tau^{\frac{s}{2}}(\tau-\Delta_{A})^{-1}V_{1}(\tau-\Delta_{A})^{-1}d\tau,
$$
we obtain
\begin{flalign*}
	c(s)\int^{\infty}_{0}\tau^{\frac{s}{2}}(\tau-\Delta_{A})^{-1}(\tilde{A}-\tilde{\tilde{ A}}\cdot\nabla)(\tau-\Delta_{A})^{-1}d\tau,
\end{flalign*}
hence we need to study the resolvent $R_{A}=(\tau-\Delta_{A})^{-1}$ with some weight functions and the first-order derivative of $R_{A}$.

From the resolvent identity
$$R_{A}(\tau)=R_{0}(\tau)(I+(\arrowvert A\arrowvert^{2}-iA\cdot\nabla-i\nabla\cdot A)R_{0}(\tau))^{-1},$$
it implies that the terms of free resolvent $R_{0}(\tau)$ with weight functions shall be researched. Furthermore, in order to ensure the integrability of the integral $$\int^{\infty}_{0}\tau^{\frac{s}{2}}(\tau-\Delta_{A})^{-1}(\tilde{A}-\tilde{\tilde{ A}}\cdot\nabla)(\tau-\Delta_{A})^{-1}d\tau,$$we need to obtain the power of $\tau$ from the estimates of $R_{0}(\tau)$.

\begin{lemma}\label{lemma2.5}
	Let $R_{0}(\tau)=(\tau-\Delta)^{-1}$. Then we have the weighted resolvent estimates for $\tau>0$, $1\leqslant r\leqslant2$ and $N\geqslant0$
	\begin{flalign}
		&\|R_{0}(\tau)u\|_{L^{r}}\leqslant C\tau^{-1}\|u\|_{L^{r}},\label{2.18}\\
		&\|\langle x\rangle ^{-N}R_{0}(\tau)u\|_{L^{r}}\leqslant C\tau^{-1+\frac{N}{2}}\|u\|_{L^{r}},\label{2.19}\\
		&\|R_{0}(\tau)\langle x\rangle ^{-N}u\|_{L^{r}}\leqslant C\tau^{-1+\frac{N}{2}}\|u\|_{L^{r}},\label{2.20}\\
		&\|\langle x\rangle ^{N_{1}}R_{0}(\tau)\langle x\rangle ^{-N}u\|_{L^{r}}\leqslant C\tau^{-1+\frac{N-N_{1}}{2}}\|u\|_{L^{r}},\label{2.21}
	\end{flalign}
	where $N_{1}\leqslant N$.
\end{lemma}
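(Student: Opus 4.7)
The plan is to exploit the fact that $R_{0}(\tau)$ is the convolution operator with the non-negative kernel $G_\tau(x)=\mathscr{F}^{-1}[(\tau+|\xi|^2)^{-1}]$, for which two structural facts drive the analysis: first, $\|G_\tau\|_{L^1(\mathbb{R}^n)} = \widehat{G_\tau}(0) = 1/\tau$ (as $G_\tau$ is non-negative), and second, the scaling identity $G_\tau(x) = \tau^{(n-2)/2} G_1(\sqrt{\tau}\,x)$, where $G_1$ has a locally integrable singularity at the origin and exponential decay at infinity. The first estimate \eqref{2.18} is then immediate from Young's convolution inequality:
\[
\|R_0(\tau)u\|_{L^r} = \|G_\tau*u\|_{L^r}\leq \|G_\tau\|_{L^1}\|u\|_{L^r} = \tau^{-1}\|u\|_{L^r},
\]
valid for every $1\leq r\leq\infty$, and in particular on the range $r \in [1,2]$.

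For \eqref{2.19}, I would reduce the $L^r\to L^r$ bound to a weighted kernel estimate. Writing $\langle x\rangle^{-N} R_0(\tau)$ as the integral operator with kernel $K(x,y) = \langle x\rangle^{-N}G_\tau(x-y)$, I apply Schur's test together with Riesz--Thorin interpolation between the $L^1\to L^1$ and $L^\infty\to L^\infty$ endpoints, reducing the task to bounding $\sup_y \int \langle x\rangle^{-N} G_\tau(x-y)\,dx$. Via the substitution $w=\sqrt{\tau}\,(x-y)$, the scaling identity, and the factorization $\langle x\rangle^{-N} = \tau^{N/2}(\tau+|w+\sqrt{\tau}\,y|^2)^{-N/2}$, this supremum reduces to $\tau^{-1+N/2}$ times an explicit integral that is controlled uniformly in $\tau$ and $y$ by splitting into the regions $|w|\leq\sqrt{\tau}$ and $|w|>\sqrt{\tau}$, using respectively the local behavior of $G_1$ near the origin and its exponential decay at infinity. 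Estimate \eqref{2.20} then follows from \eqref{2.19} by duality, since $R_0(\tau)$ is self-adjoint on $L^2$ for $\tau>0$ and multiplication by $\langle x\rangle^{-N}$ is symmetric, so it is the adjoint of \eqref{2.19} on the dual range of exponents. For \eqref{2.21}, I would write
\[
\langle x\rangle^{N_1} R_0(\tau)\langle x\rangle^{-N} = \bigl[\langle x\rangle^{N_1}R_0(\tau)\langle x\rangle^{-N_1}\bigr]\cdot \langle x\rangle^{N_1-N},
\]
where the final factor is a bounded multiplier on $L^r$ (since $N_1\leq N$), and for the bracketed operator I apply the Peetre-type inequality $\langle x\rangle^{N_1}\leq C\langle y\rangle^{N_1}\langle x-y\rangle^{N_1}$ to its kernel $\langle x\rangle^{N_1} G_\tau(x-y)\langle y\rangle^{-N_1}$, reducing matters to convolution with $\langle z\rangle^{N_1}G_\tau(z)$, whose scaling analysis (analogous to the one for \eqref{2.19}) yields the stated $\tau^{-1+(N-N_1)/2}$ rate.

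The hard part will be the weighted kernel estimate underlying \eqref{2.19}, where both the singularity of $G_\tau$ at the origin (of strength depending on the dimension $n$) and the spreading of $G_\tau$ on scale $1/\sqrt{\tau}$ must be handled simultaneously; this requires a careful decomposition of the domain of integration and delicate use of both the dimensional decay of $G_1$ near zero and its exponential decay at infinity. An additional subtlety is that the endpoint estimates must be chosen sharp enough that Riesz--Thorin interpolation preserves the full factor of $\tau^{-1+N/2}$ uniformly in $r\in[1,2]$, since a crude interpolation between $L^1\to L^1$ with rate $\tau^{-1+N/2}$ and $L^\infty\to L^\infty$ with rate $\tau^{-1}$ would only yield the weaker rate $\tau^{-1+(N/(2r))}$ at $r=2$; recovering the claimed exponent uniformly therefore demands a more refined argument tailored to the weight structure.
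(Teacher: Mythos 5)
Your proof of \eqref{2.18} via Young's inequality, using that $G_\tau\geq0$ so $\|G_\tau\|_{L^1}=\widehat{G_\tau}(0)=\tau^{-1}$, is correct and in fact more direct than the paper's scaling computation. For the weighted estimates \eqref{2.19}--\eqref{2.21}, however, your sketch contains a gap that you yourself diagnose at the end but do not close. With $A:=\sup_y\int\langle x\rangle^{-N}G_\tau(x-y)\,dx$ and $B:=\sup_x\int\langle x\rangle^{-N}G_\tau(x-y)\,dy=\tau^{-1}$, the Schur test (equivalently, Riesz--Thorin between the $L^1\to L^1$ and $L^\infty\to L^\infty$ endpoints) gives only $\|\langle x\rangle^{-N}R_0(\tau)\|_{L^r\to L^r}\leq A^{1/r}B^{1-1/r}$; even granting $A\sim\tau^{-1+N/2}$, this yields $C\tau^{-1+N/(2r)}$, which degrades to $\tau^{-1+N/4}$ at $r=2$ rather than the claimed $\tau^{-1+N/2}$. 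You correctly flag this and remark that ``a more refined argument tailored to the weight structure'' is needed, but since that argument is never supplied, \eqref{2.19} is established by your method only at $r=1$.

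Your duality step for \eqref{2.20} has a second, independent defect: the adjoint of the bounded operator $\langle x\rangle^{-N}R_0(\tau)\colon L^r\to L^r$ is $R_0(\tau)\langle x\rangle^{-N}\colon L^{r'}\to L^{r'}$ with $\tfrac{1}{r}+\tfrac{1}{r'}=1$, so from \eqref{2.19} on $r\in[1,2]$ you would obtain \eqref{2.20} only on the dual range $r'\in[2,\infty]$, not the required $[1,2]$; and \eqref{2.21}, resting on the same weighted-kernel bound, inherits both problems. The paper proceeds differently: it rescales the kernel as $R_0(\tau,|x-y|)=\tau^{n/2-1}R_0(1,\tau^{1/2}|x-y|)$, then substitutes $x\mapsto\tau^{-1/2}x$ inside the $L^r$ integral so that the $\tau$-power is extracted directly, reducing each of \eqref{2.18}--\eqref{2.21} to a $\tau$-independent mapping property of a weighted version of $R_0(1)$. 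Because no interpolation between endpoints is invoked, the exponent dilution that defeats your Schur/Riesz--Thorin route does not enter. To repair your argument, you would need to carry out the scaling of the weighted kernel before applying the convolution bound on each fixed $L^r$, rather than interpolating from the $r=1$ and $r=\infty$ endpoints.
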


\begin{proof}
	By scaling, the free resolvent operator $R_{0}(\tau)$ is given by the kernel	
	\begin{flalign}\label{2.22}	
		R_{0}(\tau,\arrowvert x-y\arrowvert)=\tau^{-1+\frac{n}{2}}\int_{\mathbb{R}^{n}}\frac{e^{i\tau^{\frac{1}{2}}(x-y)\cdot\xi}}{\arrowvert\xi\arrowvert^{2}+\tau}d\xi:=\tau^{-1+\frac{n}{2}}R_{0}(1,\tau^{\frac{1}{2}}\arrowvert x-y\arrowvert).
	\end{flalign}	
	
	First, using \eqref{2.22}, we estimate that
	\begin{flalign}\label{2.23}		
		\|R_{0}(\tau)u\|_{L^{r}}^{r}
		=&\tau^{-r-\frac{n}{2}+\frac{n}{2}}\int_{\mathbb{R}^{n}}\arrowvert R_{0}(1)u(x)\arrowvert^{r}dx\nonumber\\	
		\leqslant&C\tau^{-r}\|u\|_{L^{r}}^{r}
	\end{flalign}	
	where $R_{0}(1)=(1-\Delta)^{-1}$. The last inequality of \eqref{2.23} is true since that $R_{0}(1)$ is bounded operator from $L^{r}(\mathbb{R}^{n})$ into  $L^{r}(\mathbb{R}^{n})$. Therefore, from \eqref{2.23} we have
	$$\|R_{0}(\tau)u\|_{L^{r}}\leqslant C\tau^{-1}\|u\|_{L^{r}},$$
	which completes the proof of \eqref{2.18}.
	
	Next, again using \eqref{2.22}, we estimate that
	\begin{flalign}\label{2.24}	
		\|\langle x\rangle ^{-N}R_{0}(\tau)u\|_{L^{r}}^{r}=&\tau^{-r+\frac{rN}{2}-\frac{n}{2}+\frac{n}{2}}\int_{\mathbb{R}^{n}}\arrowvert \langle x\rangle^{-rN}R_{0}(1)u(x)\arrowvert^{r}dx\nonumber\\	
		\leqslant&C\tau^{-r+\frac{rN}{2}}\|u\|_{L^{r}}^{r},
	\end{flalign}
	then from \eqref{2.24} we obtain
	$$\|\langle x\rangle ^{-N}R_{0}(\tau)u\|_{L^{r}}\leqslant C\tau^{-1+\frac{N}{2}}\|u\|_{L^{r}},$$
	which completes the proof of \eqref{2.19}.
	
	Similarly, we also obtain \eqref{2.20}. Indeed, using scaling as above, the term  $\int_{\mathbb{R}^{n}}\arrowvert R_{0}(1) \langle x\rangle^{-rN}u(x)\arrowvert^{r}dx$
	is same as $\int_{\mathbb{R}^{n}}\arrowvert\langle x\rangle^{-rN} R_{0}(1)u(x)\arrowvert^{r}dx.$
	
	Finally, using \eqref{2.22}, we estimate that
	\begin{flalign}\label{2.25}	
		&\|\langle x\rangle ^{N_{1}}R_{0}(\tau)\langle x\rangle ^{-N}u\|_{L^{r}}^{r}\nonumber\\
		=&\tau^{-r+\frac{r(N-N_{1})}{2}-\frac{n}{2}}\int_{\mathbb{R}^{n}}\langle x\rangle^{rN_{1}}\arrowvert\int_{\mathbb{R}^{n}}R_{0}(1,\arrowvert x-y\arrowvert)\langle y\rangle^{-N}u(\tau^{-\frac{1}{2}}y)dy\arrowvert^{r}dx\nonumber\\	
		\leqslant&\tau^{-r+\frac{r(N-N_{1})}{2}}\|u\|_{L^{r}}^{r}\cdot\int_{\mathbb{R}^{n}}\langle x\rangle^{r(N_{1}-N)}(\int_{\mathbb{R}^{n}}\arrowvert R_{0}(1,\arrowvert x-y\arrowvert)\langle x-y\rangle^{-N}\arrowvert^{r'}dy)^{\frac{r}{r'}}dx\nonumber\\
		\leqslant&C\tau^{-r+\frac{r(N-N_{1})}{2}}\|u\|_{L^{r}}^{r},\ \frac{1}{r}+\frac{1}{r'}=1.	
	\end{flalign}
	In fact, the resolvent $R_{0}(\tau)$ is given by the kernel
	\begin{flalign*}
		\frac{i}{4}(\frac{i\tau^{\frac{1}{2}}}{2\pi\arrowvert x-y\arrowvert})^{\frac{n}{2}-1}H_{\frac{n}{2}-1}^{(1)}(i\tau^{\frac{1}{2}}\arrowvert x-y\arrowvert),
	\end{flalign*}
	where $H_{\frac{n}{2}-1}^{(1)}$ is the first Hankel function. And we have the following estimates (see \cite{AS}),
	\begin{equation*}
		\arrowvert H_{\frac{n}{2}-1}^{(1)}(i\tau^{\frac{1}{2}}\arrowvert x-y\arrowvert)\arrowvert\leqslant
		\left\{
		\begin{split}
			&C(\tau^{\frac{1}{2}}\arrowvert x-y\arrowvert)^{-(\frac{n}{2}-1)},\, \qquad\   \tau^{\frac{1}{2}}\arrowvert x-y\arrowvert\leqslant1;\\
			&C(\tau^{\frac{1}{2}}\arrowvert x-y\arrowvert)^{-\frac{1}{2}}e^{-\tau^{\frac{1}{2}}\arrowvert x-y\arrowvert},\ \tau^{\frac{1}{2}}\arrowvert x-y\arrowvert>1.
		\end{split}
		\right.
	\end{equation*}
	Therefore, from \eqref{2.25} we obtain that
	$$\|\langle x\rangle ^{N_{1}}R_{0}(\tau)\langle x\rangle ^{-N}u\|_{L^{r}}\leqslant C\tau^{-1+\frac{N-N_{1}}{2}}\|u\|_{L^{r}},$$
	which completes the proof of \eqref{2.21}.
	
	Combining \eqref{2.23}, \eqref{2.24} and \eqref{2.25}, \eqref{2.18}-\eqref{2.21} follow. Hence, the proof of Lemma \ref{lemma2.5} is completed.
\end{proof}

\begin{remark}\label{remark2.1}
	When $0<\tau<1$, from \eqref{2.19}-\eqref{2.21}, we know that we can use the decay of negative weight functions to increase the power of $\tau$.
	
	When $\tau\gg1$, \eqref{2.18} is the resolvent estimate with the optimal decay rate $\tau^{-1}$.
\end{remark}

By applying these results above of resolvent $R_{0}(\tau)$, we can obtain the following estimates of $V(s)$.
\begin{lemma}\label{lemma2.6}
	When $n\leqslant3$, let $V(s)$ be the operator in  Lemma \ref{lemma2.1}, for any  $1\leqslant r\leqslant2$ and $\frac{2}{r}-1<s<1+\alpha$, we have
	\begin{equation}\label{2.26}
		\|V(s)u\|_{L_{x}^{r}}\leqslant C \|u\|_{\dot{H}_{x}^{\alpha}},\forall\ u\in \mathscr{S}(\mathbb{R}^{n}),
	\end{equation}
	where $0<\alpha<\min\{1,s\}$.
\end{lemma}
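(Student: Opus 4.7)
The plan is to start from the integral representation of $V(s)$ furnished by Lemma \ref{lemma2.4} together with the decomposition $V_1 = \tilde A - \tilde{\tilde A}\cdot \nabla$ in \eqref{2.17} (and an analogous splitting of $V_2$), so that bounding $V(s)u$ in $L_x^r$ reduces to estimating integrals of the type
$$\int_0^\infty \tau^{s/2}\, R_A(\tau)\, W\, R_A(\tau)\, u\, d\tau,$$
where $W$ is either a multiplication operator decaying like $\langle x\rangle^{-\beta+1}$ (coming from $|A|^2$, $x\cdot\nabla A$ or $x\cdot\nabla\nabla A$ via hypothesis \eqref{1.2}) or a first-order operator $\tilde{\tilde A}\cdot\nabla$ whose coefficients also decay like $\langle x\rangle^{-\beta+1}$. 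The magnetic resolvent is then reduced to the free one via the identity displayed before Lemma \ref{lemma2.5}, so that everything is expressed through $R_0(\tau)$ and $(I + (|A|^2 - iA\cdot\nabla - i\nabla\cdot A)R_0(\tau))^{-1}$; the latter is bounded uniformly in $\tau > 0$ because the decay $\beta > n+2$ makes the perturbation small in operator norm on the appropriate $L^p$-spaces, and because zero is neither an eigenvalue nor a resonance of $\Delta_A$.

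I would then split the $\tau$-integral at $\tau = 1$. On $(0,1)$, the $\langle x\rangle^{-\beta+1}$ decay of $\tilde A$ and $\tilde{\tilde A}$ is distributed onto the two adjacent resolvents via the weighted estimates \eqref{2.19}--\eqref{2.21} of Lemma \ref{lemma2.5}; this trades spatial decay for positive powers of $\tau$, making $\tau^{s/2}$ times the resulting factor $\tau^{-2 + N/2}$ integrable at $0$ once $N$ is chosen large enough (allowed because $\beta-1 > n+1$). On $[1,\infty)$ the unweighted bound \eqref{2.18} gives $\|R_A(\tau)\|_{L^r \to L^r}\lesssim \tau^{-1}$, producing a $\tau^{-2}$ factor that makes $\int_1^\infty \tau^{s/2}\cdot\tau^{-2}\,d\tau$ converge whenever $s < 2$, which is consistent with the hypothesis $s < 1 + \alpha < 2$.

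The delicate piece is the first-order term $R_A(\tau)\,\tilde{\tilde A}\cdot\nabla\,R_A(\tau)\,u$, since a full derivative on a resolvent at large $\tau$ costs only $\tau^{-1/2}$ and would restrict the argument to $s < 1$ with no interpolation room. To remedy this I would transfer $\alpha$ powers of the gradient onto $u$, generating the factor $\|u\|_{\dot H_x^{\alpha}}$ that appears in the conclusion, and leave $(1 - \alpha)$ derivatives to be absorbed by a single resolvent at the cost $\tau^{-(1-\alpha)/2}$; the high-$\tau$ integrand then behaves like $\tau^{s/2 - 5/2 + \alpha/2}$, integrable precisely when $s < 1 + \alpha$, which matches the upper bound in the hypothesis. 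The lower bound $s > \frac{2}{r} - 1$ together with the restriction $n \leq 3$ is used when the $L^2$-based resolvent bounds are converted to an $L^r$-output via Hölder against the spatial weights of $\tilde A$ and $\tilde{\tilde A}$, which lie in every $L^p$ with $p \geq 1$ thanks to $\beta > n+2$. The main obstacle will be the simultaneous bookkeeping of spatial weight, $\tau$-weight and fractional regularity in the derivative piece, together with the need to control the Neumann-type inversion $(I + W R_0(\tau))^{-1}$ uniformly in $\tau$ without eroding any of the positive powers of $\tau$ already extracted from the weighted resolvent estimates.
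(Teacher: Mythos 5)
Your proposal matches the paper's proof in all essentials: the same integral representation of $V(s)$ with the splitting $V_1=\tilde A-\tilde{\tilde A}\cdot\nabla$, the reduction to $R_0(\tau)$ and $(I+V_3R_0(\tau))^{-1}$ via the resolvent identity, the split of the $\tau$-integral at $\tau=1$ with weighted resolvent bounds trading the decay of $A$ for positive powers of $\tau$ near $0$, and the transfer of $\alpha$ derivatives onto $u$ in the first-order piece, which is exactly how the paper reaches the threshold $s<1+\alpha$. Only minor bookkeeping differs: the uniform bound on $(I+V_3R_0(\tau))^{-1}$ is obtained by compactness and the Fredholm alternative, not by smallness of the perturbation (no smallness of $A$ is assumed); the correct large-$\tau$ rate for the derivative piece is $\tau^{s/2}\cdot\tau^{-2+\frac{1-\alpha}{2}}=\tau^{\frac{s}{2}-\frac{3}{2}-\frac{\alpha}{2}}$ (your stated $\tau^{\frac{s}{2}-\frac{5}{2}+\frac{\alpha}{2}}$ carries the derivative cost with the wrong sign, and it is the former exponent that actually yields $s<1+\alpha$); and the lower bound $s>\frac{2}{r}-1$ is forced by integrability of $\tau^{\frac{s}{2}-\frac{1}{2}-\frac{1}{r}}$ near $\tau=0$, coming from the $\tau^{-\frac{1}{r}-\frac{1}{2}}$ small-$\tau$ bound, rather than by the H\"older conversion step itself.
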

\begin{proof}
	Let us recall Lemma \ref{lemma2.4}
	\begin{equation*}
		\begin{split}
			V(s)=&c(s)\int^{\infty}_{0}\tau^{\frac{s}{2}}(\tau-\Delta_{A})^{-1}V_{1}(\tau-\Delta_{A})^{-1}d\tau\\
			+&ic(s)\int^{\infty}_{0}\tau^{\frac{s}{2}}(\tau-\Delta_{A})^{-1}V_{2}(\tau-\Delta_{A})^{-1}d\tau,
		\end{split}
	\end{equation*}
	where
	\begin{equation*}
		\begin{split}
			&V_{1}:=(2+x\cdot\nabla)\arrowvert A\arrowvert^{2}-2i\nabla\cdot A-2ix\cdot(\nabla A)\cdot\nabla -ix\cdot(\nabla\nabla A),\\
			&V_{2}:=2A\cdot\nabla+2x\cdot(\nabla A)\cdot\nabla+2{\rm div}A+x\cdot(\nabla\nabla A)-\arrowvert A\arrowvert^{2}-x\cdot(\nabla A)\cdot A.
		\end{split}
	\end{equation*}
	Therefore,
	\begin{equation*}
		\begin{split}	
			\|V(s)u\|_{L_{x}^{r}}&\leqslant c(s)\int^{\infty}_{0}\tau^{\frac{s}{2}}\|(\tau-\Delta_{A})^{-1}V_{1}(\tau-\Delta_{A})^{-1}u\|_{L_{x}^{r}}d\tau\\
			&+c(s)\int^{\infty}_{0}\tau^{\frac{s}{2}}\|(\tau-\Delta_{A})^{-1}V_{2}(\tau-\Delta_{A})^{-1}u\|_{L_{x}^{r}}d\tau.
		\end{split}
	\end{equation*}
	
	Now we estimate $$\int^{\infty}_{0}\tau^{\frac{s}{2}}\|(\tau-\Delta_{A})^{-1}V_{1}(\tau-\Delta_{A})^{-1}u\|_{L_{x}^{r}}d\tau$$ and separate $V_{1}$ into two parts $V_{1}:=\tilde{A}-\tilde{\tilde{A}}\cdot\nabla$ in \eqref{2.17}. Furthermore, the integral of $\tau$ is considered in two parts, i.e.  $\int^{\infty}_{0}=\int^{1}_{0}+\int^{\infty}_{1}$.
	
	When $\tau\in(0,1)$, we discuss the following two parts of $V_{1}$ separately:
	
	$Setp\ 1.$ Estimate for $\tilde{A}$,
	\begin{flalign}\label{2.27}
		&\|(\tau-\Delta_{A})^{-1}\tilde{A}(\tau-\Delta_{A})^{-1}u\|_{L_{x}^{r}}\nonumber\\
		=&\|(\tau-\Delta_{A})^{-1}\langle x\rangle^{-\frac{\beta}{2}}\cdot\tilde{A}\cdot\langle x\rangle^{-\frac{\beta}{2}}(\tau-\Delta_{A})^{-1}u\|_{L_{x}^{r}}\nonumber\\
		\leqslant&C\|(\tau-\Delta_{A})^{-1}\langle x\rangle^{-\frac{\beta}{2}}\|_{L^{2}\rightarrow L^{r}}\cdot\|\langle x\rangle^{-\frac{\beta}{2}}(\tau-\Delta_{A})^{-1}u\|_{L^{2}}.
	\end{flalign}
	
	For the first term on the right of \eqref{2.27}, using the resolvent identity
	\begin{equation}\label{2.28}
		R_{A}+R_{A}(\arrowvert A\arrowvert^{2}-iA\cdot\nabla-i\nabla\cdot A)R_{0}=R_{0},
	\end{equation}
	i.e.
	\begin{flalign*}
		R_{A}=&R_{0}(I+(\arrowvert A\arrowvert^{2}-iA\cdot\nabla-i\nabla\cdot A)R_{0})^{-1}\\
		:=&R_{0}(I+V_{3}(x)R_{0})^{-1},
	\end{flalign*}
	where $V_{3}(x):=-iA\cdot\nabla-i\nabla\cdot A+\arrowvert A\arrowvert^{2}$,
	we have
	\begin{flalign}\label{2.29}
		&\|(\tau-\Delta_{A})^{-1}\langle x\rangle^{-\frac{\beta}{2}}\|_{L^{2}\rightarrow L^{r}}\nonumber\\
		=&\|R_{0}\langle x\rangle^{-\frac{\beta}{2}}\cdot\langle x\rangle^{\frac{\beta}{2}}(I+V_{3}(x)R_{0})^{-1}\langle x\rangle^{-\frac{\beta}{2}}\|_{L^{2}\rightarrow L^{r}}\nonumber\\
		\leqslant&\|R_{0}\langle x\rangle^{-\frac{\beta}{2}}\|_{L^{2}\rightarrow L^{r}}\cdot\|\langle x\rangle^{\frac{\beta}{2}}(I+V_{3}(x)R_{0})^{-1}\langle x\rangle^{-\frac{\beta}{2}}u\|_{L^{2}}.
	\end{flalign}
	
	Applying $\|R_{0}\langle x\rangle^{-\sigma}\|_{L^{r}\rightarrow L^{r}}\leqslant C\tau^{-\frac{1}{r}}$ with $\sigma=2(1-\frac{1}{r})$ in Lemma \ref{lemma2.5}, we obtain for $1\leqslant r\leqslant2$ and $\beta>n+2\sigma\geqslant n+2$
	\begin{flalign}
		&\|R_{0}\langle x\rangle^{-\frac{\beta}{2}}\|_{L^{2}\rightarrow L^{r}}\nonumber\\
		\leqslant&C\|R_{0}\langle x\rangle^{-\sigma}\|_{L^{r}\rightarrow L^{r}}\cdot\|\langle x\rangle^{-\frac{\beta}{2}+\sigma}\|_{ L^{2}\rightarrow L^{r}}\nonumber\\
		\leqslant& C\tau^{-\frac{1}{r}}.\label{2.30}
	\end{flalign}
	
	Furthermore, applying the limiting absorption principle (see Theorem 4.1 in \cite{Ag}), we have
	\begin{equation}\label{2.31}
		R_{0}:L^{2,\frac{\beta}{2}}\rightarrow H^{2,-\frac{\beta}{2}},\ {\rm for} \  \frac{\beta}{2}>\frac{1}{2},\tau>0.
	\end{equation}
	Computing directly, we obtain that
	\begin{flalign}\label{2.32}
		\|\tilde{A}u\|_{L^{2,\frac{\beta}{2}}}\leqslant C(\int_{R^{n}}\langle x\rangle^{-2\beta}\langle x\rangle^{\beta}\arrowvert u\arrowvert^{2}dx)^{\frac{1}{2}}\leqslant C\|u\|_{L^{2,-\frac{\beta}{2}}}.
	\end{flalign}
	Therefore, combining \eqref{2.31} and \eqref{2.32}, we obtain that (also see \cite{MMS})
	\begin{equation*}
		\tilde{A}\cdot\nabla R_{0}:L^{2,\frac{\beta}{2}}\rightarrow L^{2,\frac{\beta}{2}}.
	\end{equation*}
	Define the operator $T\in B(L^{2,\frac{\beta}{2}},L^{2,\frac{\beta}{2}})$ for $u\in L^{2,\frac{\beta}{2}}$ by
	\begin{equation}\label{2.33}
		Tu=(\arrowvert A\arrowvert^{2}-iA\cdot\nabla-i\nabla\cdot A)R_{0}u,
	\end{equation}
	then $(I+T)^{-1}$ exists in $B(L^{2,\frac{\beta}{2}},L^{2,\frac{\beta}{2}})$. Indeed, using the resolvent identity \eqref{2.28} and \eqref{2.33}, it follows that for any $u\in L^{2}$ and $g:=(I+T)u$, we have
	$$R_{A}(I+T)u=R_{0}u,$$
	i.e.
	\begin{equation}\label{2.34}
		R_{A}g=R_{0}u.	
	\end{equation}
	Letting $u$ vary on $L^{2}$, it follows from \eqref{2.34} that the range of $(I+T)\Subset L^{2}$, which implies that the range of $(I+T)=L^{2,\frac{\beta}{2}}$. From this it follows by well know results on compact operators in a Hilbert space (the Fredholm-Riesz theory) that the inverse $(I+T)^{-1}$ exists in $B(L^{2,\frac{\beta}{2}},L^{2,\frac{\beta}{2}})$, that is (also see the proof of Lemma 3.5 in \cite{KK})
	\begin{equation}\label{2.35}
		\|\langle x\rangle^{\frac{\beta}{2}}(I+A\cdot\nabla R_{0})^{-1}\langle x\rangle^{-\frac{\beta}{2}}\|_{L^{2}\rightarrow L^{2}}\leqslant C.
	\end{equation}
	
	Consequently, according \eqref{2.30} and \eqref{2.35} into \eqref{2.29}, we obtain that
	\begin{equation}\label{2.36}
		\|(\tau-\Delta_{A})^{-1}\langle x \rangle^{-\frac{\beta}{2}}\|_{L^{2}\rightarrow L^{r}}\leqslant C\tau^{-\frac{1}{r}},
	\end{equation}
	where $\beta>n+2$, $1\leqslant r\leqslant2$.
	
	For the second term on the right of \eqref{2.27}, using the resolvent identity and the estiamtes of $R_{0}(\tau)$, we have
	\begin{flalign}
		&\|\langle x\rangle^{-\frac{\beta}{2}}(\tau-\Delta_{A})^{-1}u\|_{L^{2}}\nonumber\\
		=&\|\langle x\rangle^{-\frac{\beta}{2}}(R_{0}(\tau)-R_{A}V_{3}(x)R_{0}(\tau))u\|_{L^{2}}\nonumber\\
		\leqslant&\|\langle x\rangle^{-\frac{\beta}{2}}R_{0}(\tau)u\|_{L^{2}}+C\|\langle x\rangle^{-\frac{\beta}{2}}R_{A}(\langle x\rangle^{-\beta}+\langle x\rangle^{-\beta}\cdot\nabla)R_{0}(\tau)u\|_{L^{2}}\nonumber\\
		\leqslant&\|\langle x\rangle^{-\frac{\beta}{2}}R_{0}(\tau)u\|_{L^{2}}+C\|\langle x\rangle^{-\frac{\beta}{2}}R_{0}(\tau)(I+V_{3}(x)R_{0})^{-1}\langle x\rangle^{-\beta}R_{0}(\tau)u\|_{L^{2}}\nonumber\\
		&+\|\langle x\rangle^{-\frac{\beta}{2}}R_{0}(\tau)(I+V_{3}(x)R_{0})^{-1}\langle x\rangle^{-\beta}\cdot\nabla R_{0}(\tau)u\|_{L^{2}}\nonumber\\
		\leqslant&\|\langle x\rangle^{-\frac{\beta}{2}}R_{0}(\tau)u\|_{L^{2}}+C\|\langle x\rangle^{-\frac{\beta}{2}}R_{0}\langle x\rangle^{-\frac{\beta}{2}}\cdot\langle x\rangle^{\frac{\beta}{2}}(I+V_{3}(x)R_{0})^{-1}\langle x\rangle^{-\frac{\beta}{2}}\nonumber\\
		&\cdot\langle x\rangle^{-\frac{\beta}{2}}R_{0}(\tau)u\|_{L^{2}}+C\|\langle x\rangle^{-\frac{\beta}{2}}R_{0}\langle x\rangle^{-\frac{\beta}{2}}\cdot\langle x\rangle^{\frac{\beta}{2}}(I+V_{3}(x)R_{0})^{-1}\langle x\rangle^{-\frac{\beta}{2}}\nonumber\\
		&\cdot\langle x\rangle^{-\frac{\beta}{2}}\nabla R_{0}(\tau)u\|_{L^{2}}\nonumber\\
		\leqslant&C\|\langle x\rangle^{-\frac{\beta}{2}}R_{0}(\tau)u\|_{L^{2}}+C\|\langle x\rangle^{-\frac{\beta}{2}}\nabla R_{0}(\tau)u\|_{L^{2}}\nonumber\\
		\leqslant&C\tau^{-\frac{1}{2}}\|\langle x\rangle^{-\frac{\beta}{2}+1}u\|_{L^{2}}+C\|\langle x\rangle^{-\frac{\beta}{2}}(-\Delta)^{\frac{1-\alpha}{2}} R_{0}(\tau)(-\Delta)^{\frac{\alpha}{2}}u\|_{L^{2}}\nonumber\\
		\leqslant&C\tau^{-\frac{1}{2}}\|\langle x\rangle^{-\frac{\beta}{2}+1}\|_{L^{\frac{n}{\alpha}}}\cdot\|u\|_{L^{\frac{2n}{n-2\alpha}}}+C\|\langle x\rangle^{-\frac{\beta}{2}}\|_{L^{\frac{n}{\alpha}}}\cdot\|(-\Delta)^{\frac{1-\alpha}{2}} R_{0}(-\Delta)^{\frac{\alpha}{2}}u\|_{L^{\frac{2n}{n-2\alpha}}}\nonumber\\
		\leqslant&C\tau^{-\frac{1}{2}}\|(-\Delta)^{\frac{\alpha}{2}}u\|_{L^{2}}+C\|\nabla R_{0}(\tau)(-\Delta)^{\frac{\alpha}{2}}u\|_{L^{2}}\nonumber\\
		\leqslant&C\tau^{-\frac{1}{2}}\|(-\Delta)^{\frac{\alpha}{2}}u\|_{L^{2}},\label{2.37}
	\end{flalign}
	where $\beta>2\alpha+2$, $0<\alpha<1$.
	
	Combining \eqref{2.36} and \eqref{2.37}, we have
	\begin{equation}\label{2.38}
		\|(\tau-\Delta_{A})^{-1}\tilde{A}(\tau-\Delta_{A})^{-1}u\|_{L_{x}^{r}}\leqslant C\tau^{-(\frac{1}{r}+\frac{1}{2})}\|u\|_{\dot{H}^{\alpha}_{x}},
	\end{equation}
	where $0<\alpha<1$ and $\beta>n+2$.
	
	$Setp\ 2.$ Estimate for $ \tilde{\tilde{A}}\cdot\nabla$,
	\begin{flalign}
		&\|(\tau-\Delta_{A})^{-1}\tilde{\tilde{ A}}\cdot\nabla(\tau-\Delta_{A})^{-1}u\|_{L_{x}^{r}}\nonumber\\
		=&\|(\tau-\Delta_{A})^{-1}\langle x \rangle^{-\frac{\beta}{2}}\cdot\tilde{\tilde{ A}}\cdot\langle x \rangle^{-\frac{\beta}{2}}\nabla(\tau-\Delta_{A})^{-1}u\|_{L_{x}^{r}}\nonumber\\
		\leqslant&C\|(\tau-\Delta_{A})^{-1}\langle x \rangle^{-\frac{\beta}{2}}\|_{L^{2}\rightarrow L^{r}}\cdot\|\langle x \rangle^{-\frac{\beta}{2}}\nabla(\tau-\Delta_{A})^{-1}u\|_{L^{2}}.\label{2.39}
	\end{flalign}
	
	The first term on the right of \eqref{2.39} has been estimated in \eqref{2.36}. Now, we main estimate the second term on the right of \eqref{2.39}.
	\begin{flalign}
		&\|\langle x \rangle^{-\frac{\beta}{2}}\nabla(\tau-\Delta_{A})^{-1}u\|_{L^{2}}\nonumber\\
		=&\|\langle x\rangle^{-\frac{\beta}{2}}\nabla(R_{0}(\tau)-R_{A}V_{3}(x)R_{0}(\tau))u\|_{L^{2}}\nonumber\\
		\leqslant&\|\langle x\rangle^{-\frac{\beta}{2}}\nabla R_{0}(\tau)u\|_{L^{2}}+C\|\langle x\rangle^{-\frac{\beta}{2}}\nabla R_{A}(\langle x\rangle^{-\beta}+\langle x\rangle^{-\beta}\cdot\nabla)R_{0}(\tau)u\|_{L^{2}}\nonumber\\
		\leqslant&\|\langle x\rangle^{-\frac{\beta}{2}}\nabla R_{0}(\tau)u\|_{L^{2}}+C\|\langle x\rangle^{-\frac{\beta}{2}}\nabla R_{0}(\tau)(I+V_{3}(x)R_{0})^{-1}\langle x\rangle^{-\beta}R_{0}(\tau)u\|_{L^{2}}\nonumber\\
		&+\|\langle x\rangle^{-\frac{\beta}{2}}\nabla R_{0}(\tau)(I+V_{3}(x)R_{0})^{-1}\langle x\rangle^{-\beta}\cdot\nabla R_{0}(\tau)u\|_{L^{2}}\nonumber\\
		\leqslant&C\|\langle x\rangle^{-\frac{\beta}{2}}\nabla R_{0}(\tau)u\|_{L^{2}}+C\|\langle x\rangle^{-\frac{\beta}{2}}R_{0}(\tau)u\|_{L^{2}}\nonumber\\
		\leqslant&C\tau^{-\frac{1}{2}}\|(-\Delta)^{\frac{\alpha}{2}}u\|_{L^{2}}.\label{2.40}
	\end{flalign}
	
	Combining \eqref{2.39} and \eqref{2.40}, we have
	\begin{equation}\label{2.41}
		\|(\tau-\Delta_{A})^{-1}\tilde{\tilde{ A}}\cdot\nabla(\tau-\Delta_{A})^{-1}u\|_{L_{x}^{r}}\leqslant C\tau^{-(\frac{1}{r}+\frac{1}{2})}\|u\|_{\dot{H}^{\alpha}_{x}},
	\end{equation}
	where $0<\alpha<1$ and $\beta>n+2$.
	
	Consequently, we obtain from $Setp\ 1$ and  $Setp\ 2$ that for $\tau\in(0,1)$
	\begin{equation*}
		\|(\tau-\Delta_{A})^{-1}(\tilde{A}-\tilde{\tilde{ A}}\cdot\nabla)(\tau-\Delta_{A})^{-1}u\|_{L_{x}^{r}}\leqslant C\tau^{-(\frac{1}{2}+\frac{1}{r})}\|u\|_{\dot{H}^{\alpha}_{x}},
	\end{equation*}
	\begin{equation}\label{2.42}
		1\leqslant r\leqslant2,\ 0<\alpha<1,\  \beta>n+2.
	\end{equation}
	
	When $\tau\in(1,\infty)$, we discuss again the following two parts of $V_{1}$ separately:
	
	$Setp\ 3.$ Estimate for $\tilde{A}$,
	\begin{flalign}
		&\|(\tau-\Delta_{A})^{-1}\tilde{A}(\tau-\Delta_{A})^{-1}u\|_{L_{x}^{r}}\nonumber\\
		\leqslant&C\|(\tau-\Delta_{A})^{-1}\langle x\rangle^{-\beta}(\tau-\Delta_{A})^{-1}u\|_{L_{x}^{r}}\nonumber\\
		\leqslant&C\|(\tau-\Delta_{A})^{-1}\|_{L^{r}\rightarrow L^{r}}\cdot\|\langle x\rangle^{-\frac{\beta}{2}} \|_{L^{2}\rightarrow L^{r}}\cdot\|\langle x\rangle^{-\frac{\beta}{2}}(\tau-\Delta_{A})^{-1}u\|_{L^{2}}.\label{2.43}
	\end{flalign}
	
In fact, for $\tau>0$, from Lemma \ref{lemma2.5}, we have
\begin{flalign}\label{2.44}
\|(\tau-\Delta)^{-1}u\|_{L^{r}}\leqslant C \tau^{-1}\|u\|_{L^{r}},\ i. e.,\ \|u\|_{L^{r}}\leqslant C \tau^{-1}\|(\tau-\Delta)u\|_{L^{r}}.
\end{flalign}
Furthermore, when $\tau>1$, it's easy to obtain
	\begin{flalign*}
		&\|(\tau-\Delta_{A})u\|_{L^{r}}\nonumber\\
		=&\|(\tau-\Delta)u-iA\cdot\nabla u-i\nabla\cdot Au+\arrowvert A\arrowvert^{2}u\|_{L^{r}}\nonumber\\
		\geqslant&\|(\tau-\Delta)u\|_{L^{2}}-C\|A\cdot\nabla u\|_{L^{2}}-C\|\arrowvert A\arrowvert^{2}u\|_{L^{r}},
	\end{flalign*}
	and $$\|A\cdot\nabla u\|_{L^{r}}\leqslant C\|\nabla u\|_{L^{r}}\leqslant C\|(-\Delta) u\|_{L^{r}}+C\|u\|_{L^{r}},$$
	Hence, we have
	\begin{flalign}\label{2.45}
		\|(\tau-\Delta_{A})u\|_{L^{r}}\geqslant &C\|(\tau-\Delta)u\|_{L^{2}}-C\|(-\Delta) u\|_{L^{r}}-C\|u\|_{L^{r}}\nonumber\\
		\geqslant&C\|(\tau-\Delta)u\|_{L^{r}}.
	\end{flalign}
	From \eqref{2.44} and \eqref{2.45}, we have
	\begin{flalign*}
		\|u\|_{L^{r}}\leqslant C \tau^{-1}\|(\tau-\Delta)u\|_{L^{r}}\leqslant C\tau^{-1} \|(\tau-\Delta_{A})u\|_{L^{r}},
	\end{flalign*}
	i. e.
	\begin{flalign*}
		\|(\tau-\Delta_{A})^{-1}u\|_{L^{r}}\leqslant C \tau^{-1}\|u\|_{L^{r}},
	\end{flalign*}
	then
	\begin{flalign}\label{2.46}
		\|(\tau-\Delta_{A})^{-1}\|_{L^{r}\rightarrow {L^{r}}}\leqslant C \tau^{-1},
	\end{flalign}
	which completes the estimate of the first term on the right of \eqref{2.43}.
	
	For $1\leqslant r\leqslant2$, $\beta>n$,
	\begin{flalign}\label{2.47}
		\|\langle x\rangle^{-\frac{\beta}{2}}u\|_{L^{r}}\leqslant\|\langle x\rangle^{-\frac{\beta}{2}}\|_{L^{\frac{2r}{2-r}}}\|u\|_{L^{2}}\leqslant C\|u\|_{L^{2}},\ i. e.,\ \|\langle x\rangle^{-\frac{\beta}{2}}\|_{L^{2}\rightarrow L^{r}}\leqslant C,
	\end{flalign}
which completes the estimate of the second term on the right of \eqref{2.43}.
	
	For the third term on the right of \eqref{2.43}, we estimate that
	\begin{flalign}
		&\|\langle x\rangle^{-\frac{\beta}{2}}(\tau-\Delta_{A})^{-1}u\|_{L^{2}}\nonumber\\
		=&\|\langle x\rangle^{-\frac{\beta}{2}}(R_{0}(\tau)-R_{A}V_{3}(x)R_{0}(\tau))u\|_{L^{2}}\nonumber\\
		\leqslant&\|\langle x\rangle^{-\frac{\beta}{2}}R_{0}(\tau)u\|_{L^{2}}+C\|\langle x\rangle^{-\frac{\beta}{2}}R_{0}(\tau)(I+V_{3}(x)R_{0})^{-1}\langle x\rangle^{-\beta}R_{0}(\tau)u\|_{L^{2}}\nonumber\\
		&+\|\langle x\rangle^{-\frac{\beta}{2}}R_{0}(\tau)(I+V_{3}(x)R_{0})^{-1}\langle x\rangle^{-\beta}\cdot\nabla R_{0}(\tau)u\|_{L^{2}}\nonumber\\
		\leqslant&C\|\langle x\rangle^{-\frac{\beta}{2}}R_{0}(\tau)u\|_{L^{2}}+C\|\langle x\rangle^{-\frac{\beta}{2}}\|_{L^{\infty}}\cdot\|R_{0}(\tau)\langle x\rangle^{-\frac{\beta}{2}}\cdot\langle x\rangle^{\frac{\beta}{2}}\nonumber\\
		&(I+V_{3}(x)R_{0})^{-1}\langle x\rangle^{-\frac{\beta}{2}}\cdot\langle x\rangle^{-\frac{\beta}{2}}\nabla R_{0}(\tau)u\|_{L^{2}}\nonumber\\
		\leqslant&C\|\langle x\rangle^{-\frac{\beta}{2}}R_{0}(\tau)u\|_{L^{2}}+C\tau^{-\frac{1}{2}}\|\langle x\rangle^{-\frac{\beta}{2}}(-\Delta)^{\frac{1-\alpha}{2}} R_{0}(\tau)(-\Delta)^{\frac{\alpha}{2}}u\|_{L^{2}}\nonumber\\
		\leqslant&C\|\langle x\rangle^{-\frac{\beta}{2}}R_{0}(\tau)u\|_{L^{2}}+C\tau^{-\frac{1}{2}}\|\langle x\rangle^{-\frac{\beta}{2}}\|_{L^{\frac{n}{\alpha}}}\cdot\|(-\Delta)^{\frac{1-\alpha}{2}} R_{0}(\tau)(-\Delta)^{\frac{\alpha}{2}}u\|_{L^{\frac{2n}{n-2\alpha}}}\nonumber\\
		\leqslant&C\|\langle x\rangle^{-\frac{\beta}{2}}\|_{L^{\frac{n}{\alpha}}}\cdot\|R_{0}(\tau)u\|_{L^{\frac{2n}{n-2\alpha}}}+C\tau^{-\frac{1}{2}}\|\nabla R_{0}(\tau)(-\Delta)^{\frac{\alpha}{2}}u\|_{L^{2}}\nonumber\\
		\leqslant&C\tau^{-1}\|(-\Delta)^{\frac{\alpha}{2}}u\|_{L^{2}}.\label{2.48}
	\end{flalign}
	
	Combining \eqref{2.46}, \eqref{2.47} and \eqref{2.48}, we obtain that
	\begin{flalign}\label{2.49}
		\|(\tau-\Delta_{A})^{-1}\tilde{A}(\tau-\Delta_{A})^{-1}u\|_{L_{x}^{r}}\leqslant C\tau^{-2}\|(-\Delta)^{\frac{\alpha}{2}}u\|_{L^{2}},
	\end{flalign}
	where $0<\alpha<1$ and $\beta>n$.
	
	$Setp\ 4.$ Estimate for $\tilde{\tilde{ A}}\cdot\nabla$,
	\begin{flalign}
		&\|(\tau-\Delta_{A})^{-1}\tilde{\tilde{ A}}\cdot\nabla(\tau-\Delta_{A})^{-1}u\|_{L_{x}^{r}}\nonumber\\
		\leqslant&C\|(\tau-\Delta_{A})^{-1}\langle x\rangle^{-\beta}\nabla(\tau-\Delta_{A})^{-1}u\|_{L_{x}^{r}}\nonumber\\
		\leqslant&C\|(\tau-\Delta_{A})^{-1}\|_{L^{r}\rightarrow L^{r}}\cdot\|\langle x\rangle^{-\frac{\beta}{2}} \|_{L^{2}\rightarrow L^{r}}\cdot\|\langle x\rangle^{-\frac{\beta}{2}}\nabla(\tau-\Delta_{A})^{-1}u\|_{L^{2}}.\label{2.50}
	\end{flalign}
	
	The first and second terms on the right of \eqref{2.50} have been estimated in \eqref{2.46} and \eqref{2.47} respectively. Now, we estimate the third term on the right of \eqref{2.50}.
	\begin{flalign}
		&\|\langle x\rangle^{-\frac{\beta}{2}}\nabla(\tau-\Delta_{A})^{-1}u\|_{L^{2}}\nonumber\\
		=&\|\langle x\rangle^{-\frac{\beta}{2}}\nabla(R_{0}(\tau)-R_{A}V_{3}(x)R_{0}(\tau))u\|_{L^{2}}\nonumber\\
		\leqslant&\|\langle x\rangle^{-\frac{\beta}{2}}\nabla R_{0}(\tau)u\|_{L^{2}}+C\|\langle x\rangle^{-\frac{\beta}{2}}\nabla R_{0}(\tau)(I+V_{3}(x)R_{0})^{-1}\langle x\rangle^{-\beta}R_{0}(\tau)u\|_{L^{2}}\nonumber\\
		&+\|\langle x\rangle^{-\frac{\beta}{2}}\nabla R_{0}(\tau)(I+V_{3}(x)R_{0})^{-1}\langle x\rangle^{-\beta}\cdot\nabla R_{0}(\tau)u\|_{L^{2}}\nonumber\\
		\leqslant&\|\langle x\rangle^{-\frac{\beta}{2}}\nabla R_{0}(\tau)u\|_{L^{2}}+C\|\langle x\rangle^{-\frac{\beta}{2}}R_{0}(\tau)u\|_{L^{2}}+C\|\langle x\rangle^{-\frac{\beta}{2}}\nabla R_{0}(\tau)u\|_{L^{2}}\nonumber\\
		\leqslant&C\|(-\Delta)^{\frac{1-\alpha}{2}}R_{0}(\tau)(-\Delta)^{\frac{\alpha}{2}}u\|_{L^{2}}+C\|\langle x\rangle^{-\frac{\beta}{2}}\|_{L^{\frac{n}{\alpha}}}\cdot\|R_{0}(\tau)u\|_{L^{\frac{2n}{n-2\alpha}}}\nonumber\\
		\leqslant&C\tau^{-1+\frac{1-\alpha}{2}}\|(-\Delta)^{\frac{\alpha}{2}}u\|_{L^{2}}.\label{2.51}
	\end{flalign}
	
	Combining \eqref{2.46}, \eqref{2.47} and \eqref{2.51}, we have
	\begin{flalign}\label{2.52}
		\|(\tau-\Delta_{A})^{-1}\tilde{\tilde {A}}\cdot\nabla(\tau-\Delta_{A})^{-1}u\|_{L_{x}^{r}}\leqslant C\tau^{-2+\frac{1-\alpha}{2}}\|u\|_{\dot{H}^{\alpha}_{x}},
	\end{flalign}
	where $0<\alpha<1$ and $\beta>n$.
	
	Consequently, we obtain from $Setp\ 3$ and $Setp\ 4$ that
	\begin{equation*}
		\|(\tau-\Delta_{A})^{-1}(\tilde{A}-\tilde{\tilde{ A}}\cdot\nabla)(\tau-\Delta_{A})^{-1}u\|_{L_{x}^{r}}\leqslant C\tau^{-2+\frac{1-\alpha}{2}}\|u\|_{\dot{H}^{\alpha}_{x}},
	\end{equation*}
	\begin{equation}\label{2.53}
		1\leqslant r\leqslant2,\ 0<\alpha<1,\  \beta>n.
	\end{equation}
	
	Therefore, according to \eqref{2.42} and \eqref{2.53}, we obtain that
	\begin{flalign*}
		&\|V(s)u\|_{L^{r}_{x}}\nonumber\\
		\leqslant&C\int^{1}_{0}\tau^{\frac{s}{2}}\|(\tau-\Delta_{A})^{-1}(\tilde{A}-\tilde{\tilde {A}}\cdot\nabla)(\tau-\Delta_{A})^{-1}u\|_{L_{x}^{r}}d\tau\\
		&+C\int^{\infty}_{1}\tau^{\frac{s}{2}}\|(\tau-\Delta_{A})^{-1}(\tilde{A}-\tilde{\tilde {A}}\cdot\nabla)(\tau-\Delta_{A})^{-1}u\|_{L_{x}^{r}}d\tau\\
		\leqslant&C\int^{1}_{0}\tau^{\frac{s}{2}-\frac{1}{2}-\frac{1}{r}}\|u\|_{\dot{H}^{\alpha}_{x}}d\tau+C\int^{\infty}_{1}\tau^{\frac{s}{2}-2+\frac{1-\alpha}{2}}\|u\|_{\dot{H}^{\alpha}_{x}}d\tau\\
		\leqslant&C\|u\|_{\dot{H}^{\alpha}_{x}},
	\end{flalign*}
	where $1\leqslant r\leqslant2$,  $\frac{2}{r}-1<s<1+\alpha$ and $0<\alpha<\min\{1,s\}$. This completes the proof of Lemma \ref{lemma2.6}.
\end{proof}

\begin{lemma}\label{lemma2.7}
	When $n>3$, let $V(s)$ be the operator in  Lemma \ref{lemma2.1}, for $2k+1<s<2k+2$, $k=1,2,...$, we have
	\begin{equation}\label{2.54}
		\|V(s)u\|_{L_{x}^{r}}\leqslant C \|u\|_{\dot{H}_{A}^{2k+\alpha}},\ \forall\ u\in \mathscr{S}(\mathbb{R}^{n}),
	\end{equation}
	where $1\leqslant r\leqslant2$, $0<\alpha<1$ and  $\|u\|_{\dot{H}_{A}^{2k+\alpha}}:=\|(-\Delta_{A})^{\frac{2k+\alpha}{2}}u\|_{L^{2}_{x}}$.
\end{lemma}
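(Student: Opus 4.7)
The strategy is to reduce the estimate for $s\in(2k+1,2k+2)$ to the case already handled by Lemma \ref{lemma2.6} by peeling off $k$ powers of the magnetic Laplacian. Set $P:=-\Delta_A$ and $s_0:=s-2k\in(1,2)$, so that $P^{s/2}=P^{s_0/2}P^k$. Expanding both commutators in \eqref{2.8} via the Leibniz rule and using the same top-order cancellation mechanism that removed the leading $sP^{s/2}$ in the proof of Lemma \ref{lemma2.4}, I expect to obtain the algebraic identity
\begin{equation*}
V(s)\;=\;V(s_0)\,P^k \;+\; P^{s_0/2}\,V(2k),
\end{equation*}
where iterating $[x\cdot\nabla,P]=-2P+V_1$ from \eqref{2.10} and $[P,x\cdot A]=-V_2$ from \eqref{2.12} produces the closed form
\begin{equation*}
V(2k)\;=\;\sum_{j=0}^{k-1}P^{\,j}\bigl(V_1+iV_2\bigr)P^{\,k-1-j}.
\end{equation*}

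For the first summand $V(s_0)P^k u$, I apply Lemma \ref{lemma2.6} with exponent $s_0\in(1,2)$ to the function $P^k u$, obtaining $\|V(s_0)P^k u\|_{L^r_x}\leqslant C\|P^k u\|_{\dot H^\alpha_x}$. The norm equivalence $\|(-\Delta)^{\alpha/2}v\|_{L^2_x}\sim\|(-\Delta_A)^{\alpha/2}v\|_{L^2_x}$ for $0<\alpha<n/2$, which applies here since $n>3$ and $\alpha<1$, together with the spectral-calculus identity $P^{\alpha/2}P^k=P^{k+\alpha/2}$, then upgrades this to $C\|u\|_{\dot H^{2k+\alpha}_A}$, matching the right-hand side of \eqref{2.54}.

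For the second summand $P^{s_0/2}V(2k)u$ I substitute the integral representation \eqref{2.14} of $P^{s_0/2}$ (valid on $(0,2)$) together with the explicit form of $V(2k)$, so that each piece becomes a $\tau$-integral of a composition $(\tau-\Delta_A)^{-1}W(\tau-\Delta_A)^{-1}$ with $W\in\{V_1,V_2\}$, acting on $P^{k-1-j}u$ on the right. This is structurally identical to the integrands treated in the proof of Lemma \ref{lemma2.6}. Splitting $V_1=\tilde A-\tilde{\tilde A}\cdot\nabla$ as in \eqref{2.17}, applying the resolvent identity \eqref{2.28}, the weighted resolvent bounds of Lemma \ref{lemma2.5}, and the decay hypothesis \eqref{1.2}, the $\tau$-integrals split as $\int_0^1+\int_1^\infty$ and converge by exactly the same bookkeeping as in Steps 1--4 of Lemma \ref{lemma2.6}. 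The surviving factor of $P^{k-1-j}u$ on the right is then absorbed into $\|u\|_{\dot H^{2k+\alpha}_A}$ by spectral calculus and the same norm equivalence invoked above.

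The main technical obstacle I anticipate is the bookkeeping when the intermediate factors $P^{\,j}$ must be commuted past the coefficient operators $V_1,V_2$: each commutator $[P^{\,j},V_i]$ produces lower-order remainders whose decaying weights must still cover the loss. Here the hypothesis $\beta>n+2$ on $A,\nabla A,\nabla\nabla A$ in \eqref{1.2} is used repeatedly to guarantee integrability of the $\tau$-integrals near both $0$ and $\infty$, and the restriction $n>3$ enters through the Sobolev embeddings $\dot H^\alpha\hookrightarrow L^{2n/(n-2\alpha)}$ already employed in Lemma \ref{lemma2.6}, now applied at the higher derivative level $2k+\alpha$.
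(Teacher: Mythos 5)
Your algebraic reduction coincides with the paper's: writing $(-\Delta_A)^{s/2}=(-\Delta_A)^{(s-2k)/2}(-\Delta_A)^{k}$ and splitting the two commutators reproduces \eqref{2.57}, and your closed form $V(2k)=\sum_{j=0}^{k-1}P^{j}(V_1+iV_2)P^{k-1-j}$ (with $P=-\Delta_A$) is a correct repackaging of the cancellation of $2k(-\Delta_A)^{s/2}$ against the leading part of $[x\cdot\nabla,(-\Delta_A)^{k}]$, which the paper performs explicitly. Your treatment of the first summand $V(s_0)P^{k}u$ — run the $\tau$-integral estimates of Lemma \ref{lemma2.6} on $P^{k}u$, then use the equivalence of $(-\Delta)^{\alpha/2}$ and $(-\Delta_A)^{\alpha/2}$ to pass to $\|u\|_{\dot H_A^{2k+\alpha}}$ — is exactly the paper's \eqref{2.58} (choose $\alpha\in(s_0-1,1)$ so that the hypothesis $s_0<1+\alpha$ of Lemma \ref{lemma2.6} holds; you only say $\alpha<1$).

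The gap is in the second summand. Substituting \eqref{2.14} into $P^{s_0/2}P^{j}WP^{k-1-j}u$, $W\in\{V_1,iV_2\}$, gives $c(s_0)\int_0^\infty\tau^{\frac{s_0}{2}-1}P^{j+1}(\tau+P)^{-1}WP^{k-1-j}u\,d\tau$: there is only \emph{one} resolvent, with the unbounded factor $P^{j+1}$ sitting to its left, not the two-sided composition $(\tau-\Delta_A)^{-1}W(\tau-\Delta_A)^{-1}$ you assert. The two-resolvent structure in Lemma \ref{lemma2.4} arises only because $x\cdot\nabla$ and $x\cdot A$ are \emph{commuted} with $(-\Delta_A)(\tau-\Delta_A)^{-1}$ (Lemmas \ref{lemma2.2} and \ref{lemma2.3}); here $P^{s_0/2}$ is merely composed with $V(2k)$, so the weighted bounds and $\tau$-powers of Steps 1--4 of Lemma \ref{lemma2.6} do not apply: the symbol $\lambda^{j+1}/(\tau+\lambda)$ is unbounded in $\lambda$, and for $j\geqslant1$ the integral over $\tau>1$ does not converge by that bookkeeping. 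The commutations of $P^{j}$ past $V_1,V_2$ that you defer as anticipated bookkeeping are in fact the whole content of this estimate, and they are not carried out. The paper avoids the issue by never reintroducing the integral representation there: after the $2k$-cancellation it writes the remainder as $(-\Delta_A)^{(s-2k)/2}$ applied to an explicit differential operator of order at most $2k-1$ whose coefficients decay like $\langle x\rangle^{-\beta}$, and estimates this directly in $L^r$ by H\"older with $\|\langle x\rangle^{-\beta}\|_{L^{n/\alpha_i}}$ and Sobolev embedding, landing on $C\|(-\Delta_A)^{\frac{2k+\alpha}{2}}u\|_{L^2}$ (its \eqref{2.59}). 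To complete your argument you must either reproduce that direct weighted estimate of $P^{s_0/2}V(2k)u$ or actually perform and control the commutators $[P^{j},V_i]$; as written, the claimed reduction to Lemma \ref{lemma2.6} for the second summand fails.
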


\begin{proof}
	Let's recall some results which have been proved before:\\
	(1) the identical equation \eqref{2.8} for $0<s<2$:
	\begin{equation*}
		V(s)=s(-\Delta_{A})^{\frac{s}{2}}+[x\cdot \nabla,(-\Delta_{A})^{\frac{s}{2}}]+i[x\cdot A,(-\Delta_{A})^{\frac{s}{2}}];
	\end{equation*}
	(2) the formula about fractional Schr\"{o}dinger operators:
	\begin{equation*}
		(-\Delta_{A})^{\frac{s}{2}}=c(s)(-\Delta_{A})\int_{0}^{\infty}\tau^{\frac{s}{2}-1}(\tau-\Delta_{A})^{-1}d\tau,
	\end{equation*}		
	where $(c(s))^{-1}=\int_{0}^{\infty}\tau^{\frac{s}{2}-1}(\tau+1)^{-1}d\tau$ with $0<s<2$;\\
	(3) the estimate of $V(s)$:
	$$\|V(s)u\|_{L^{r}_{x}(\mathbb{R}^{n})}\leqslant C\|u\|_{\dot{H}^{\alpha}_{x}(\mathbb{R}^{n})},\ \forall\ u\in\mathscr{S}(\mathbb{R}^{n}),\ n\leqslant3,$$
	where $1\leqslant r\leqslant2$, $0<\alpha<\min\{1,s\}$ and $\frac{2}{r}-1<s<1+\alpha$.
	
	Since $2k+1<s<2k+2$, we can split $s$ into two parts:
	\begin{equation}\label{2.55}
		(-\Delta_{A})^{\frac{s}{2}}=(-\Delta_{A})^{\frac{s-2k}{2}}\cdot(-\Delta_{A})^{k},
	\end{equation}
	and then
	\begin{equation}\label{2.56}
		(-\Delta_{A})^{\frac{s-2k}{2}}=c(s,k)(-\Delta_{A})\int_{0}^{\infty}\tau^{\frac{s-2k}{2}-1}(\tau-\Delta_{A})^{-1}d\tau,
	\end{equation}
	where $(c(s,k))^{-1}=\int_{0}^{\infty}\tau^{\frac{s-2k}{2}-1}(\tau+1)^{-1}d\tau$.
	
	Substituting \eqref{2.55} and \eqref{2.56} into identity \eqref{2.8} of $V(s)$, we obtain that
	\begin{flalign}
		V(s)=&(s-2k)(-\Delta_{A})^{\frac{s-2k}{2}}\cdot(-\Delta_{A})^{k}+2k(-\Delta_{A})^{\frac{s}{2}}\nonumber\\
		&+[x\cdot \nabla,(-\Delta_{A})^{\frac{s-2k}{2}}\cdot(-\Delta_{A})^{k}]+i[x\cdot A,(-\Delta_{A})^{\frac{s-2k}{2}}\cdot(-\Delta_{A})^{k}]\nonumber\\
		=&(s-2k)(-\Delta_{A})^{\frac{s-2k}{2}}\cdot(-\Delta_{A})^{k}+2k(-\Delta_{A})^{\frac{s}{2}}\nonumber\\
		&+c(s,k)\int_{0}^{\infty}\tau^{\frac{s-2k}{2}-1}[x\cdot \nabla,(-\Delta_{A})(\tau-\Delta_{A})^{-1}]d\tau(-\Delta_{A})^{k}\nonumber\\
		&+ic(s,k)\int_{0}^{\infty}\tau^{\frac{s-2k}{2}-1}[x\cdot A,(-\Delta_{A})(\tau-\Delta_{A})^{-1}]d\tau(-\Delta_{A})^{k}\nonumber\\
		&+(-\Delta_{A})^{\frac{s-2k}{2}}[x\cdot \nabla,(-\Delta_{A})^{k}]+i(-\Delta_{A})^{\frac{s-2k}{2}}[x\cdot A,(-\Delta_{A})^{k}]\nonumber\\
		=&c(s,k)\int_{0}^{\infty}\tau^{\frac{s-2k}{2}}(\tau-\Delta_{A})^{-1}V_{1}(\tau-\Delta_{A})^{-1}d\tau\cdot(-\Delta_{A})^{k}\nonumber\\
		&+ic(s,k)\int_{0}^{\infty}\tau^{\frac{s-2k}{2}}(\tau-\Delta_{A})^{-1}V_{2}(\tau-\Delta_{A})^{-1}d\tau\cdot(-\Delta_{A})^{k}\nonumber\\
		&+2k(-\Delta_{A})^{\frac{s}{2}}+(-\Delta_{A})^{\frac{s-2k}{2}}[x\cdot \nabla,(-\Delta_{A})^{k}]\nonumber\\
		&+i(-\Delta_{A})^{\frac{s-2k}{2}}[x\cdot A,(-\Delta_{A})^{k}],\label{2.57}
	\end{flalign}
	where $V_{1}$ and $V_{2}$ are same as \eqref{2.9} in Lemma \ref{lemma2.2} and \eqref{2.11} in Lemma \ref{lemma2.3} respectively.
	
	Notice that the first and second terms on the right side of \eqref{2.57} have same terms because of definition of $V_{1}$ and $V_{2}$, hence we only show the estimate of the first term. Because that $1<s-2k<2$, we can use the result in (2) directly and obtain that
	\begin{flalign}
		&\|c(s,k)\int_{0}^{\infty}\tau^{\frac{s-2k}{2}}(\tau-\Delta_{A})^{-1}V_{1}(\tau-\Delta_{A})^{-1}(-\Delta_{A})^{k}ud\tau\|_{L^{r}_{x}}\nonumber\\
		\leqslant&C\int_{0}^{\infty}\tau^{\frac{s-2k}{2}}\|(\tau-\Delta_{A})^{-1}V_{1}(\tau-\Delta_{A})^{-1}(-\Delta_{A})^{k}u\|_{L^{r}_{x}}d\tau\nonumber\\
		\leqslant&C\|(-\Delta)^{\frac{\alpha}{2}}(-\Delta_{A})^{k}u\|_{L^{2}_{x}}\nonumber\\
		\leqslant&C\|(-\Delta_{A})^{\frac{2k+\alpha}{2}}u\|_{L^{2}_{x}},\label{2.58}
	\end{flalign}
	where the third inequality of \eqref{2.58} is proved in the later Lemma \ref{lemma2.11}.
	
	Next, we discuss the commutators $[x\cdot \nabla,(-\Delta_{A})^{k}]$ and $[x\cdot A,(-\Delta_{A})^{k}]$. Since the method to estimate $[x\cdot \nabla,(-\Delta_{A})^{k}]$ and $[x\cdot A,(-\Delta_{A})^{k}]$ is same, and the estimate of $[x\cdot \nabla,(-\Delta_{A})^{k}]$ is more complicated, we only show the proof of the first commutator here. Now, we expand the integer order Schr\"{o}dinger operator.
	\begin{flalign*}
		&(-\Delta_{A})^{k}\nonumber\\
		=&(-\Delta-2iA\cdot\nabla+\arrowvert A\arrowvert^{2}-i{\rm div} A)^{k}\nonumber\\
		=&(-\Delta)^{k}-2kiA\cdot\nabla(-\Delta)^{k-1}+C_{1,A}(\nabla A)\cdot(-\Delta)^{k-1}\nonumber\\
		&+C_{2,A}((-\Delta) A)\cdot\nabla(-\Delta)^{k-2}+...+C_{2k,A}(\arrowvert A\arrowvert^{2}-i{\rm div} A)^{k}\nonumber\\
		=&(-\Delta)^{k}-2kiA\cdot\nabla(-\Delta)^{k-1}+R(A)+C_{2k,A}(\arrowvert A\arrowvert^{2}-i{\rm div} A)^{k},
	\end{flalign*}
	where $R(A)=\sum^{2k-1}\limits_{m=1}C_{A,m}((-\Delta)^{\frac{m}{2}}A)\cdot(-\Delta)^{k-\frac{1+m}{2}}$, $C_{A,m}$ is constant dependent on $m$ and $A(x)$, i.e., the order of differential operators in the remainder $R(A)$ are less than $2k-1$. Applying the  expanded formula above, we have
	\begin{flalign*}
		&[x\cdot \nabla, (-\Delta_{A})^{k}]u\nonumber\\
		=&[x\cdot\nabla, (-\Delta)^{k}]u-2ki[x\cdot\nabla,A\cdot\nabla(-\Delta)^{k-1}]u\nonumber\\
		&+C_{2k,A}[x\cdot\nabla,(\arrowvert A\arrowvert^{2}-i{\rm div} A)^{k}]u+[x\cdot\nabla,R(A)]u\nonumber\\
		=&-2k(-\Delta)^{k}u-iC_{k,A}A\cdot\nabla(-\Delta)^{k-1}u+C_{k,A}Au+x\cdot\nabla R(A)u\nonumber\\
		=&-2k(-\Delta_{A})^{k}u-iC_{k,A}A\cdot\nabla(-\Delta)^{k-1}u\\
		&+C_{k,A}Au+x\cdot\nabla R(A)u.
	\end{flalign*}
	Therefore, we estimate for the third and fourth terms on the right side of \eqref{2.57} that
	\begin{flalign}
		&\|2k(-\Delta_{A})^{\frac{s}{2}}u+(-\Delta_{A})^{\frac{s-2k}{2}}[x\cdot \nabla,(-\Delta_{A})^{k}]u\|_{L^{r}_{x}}\nonumber\\	
		=&\|(-\Delta_{A})^{\frac{s-2k}{2}}(C_{A,k}A\cdot\nabla(-\Delta)^{k-1}u+C_{k,A}Au+x\cdot\nabla R(A)u)\|_{L^{r}_{x}}\nonumber\\		
		\leqslant&C\|\langle x\rangle^{-\beta}\|_{L^{\frac{n}{\alpha_{1}}}_{x}}\cdot\|(-\Delta_{A})^{\frac{s-2k}{2}}\nabla(-\Delta)^{k}u\|_{L^{\frac{2n}{n-2\alpha_{1}}}_{x}}\nonumber\\
		&+C\|(-\Delta_{A})^{\frac{s-2k}{2}}u\|_{L^{2}_{x}\rightarrow L^{2}_{x}}\cdot\|\langle x\rangle^{-\beta}\|_{L^{\frac{n}{\alpha_{2}}}_{x}}\cdot\|\sum^{2k-1}_{m=1}(-\Delta)^{k-\frac{1+m}{2}}u\|_{L^{\frac{2n}{n-2\alpha_{2}}}_{x}}\nonumber\\
		\leqslant&C\|(-\Delta_{A})^{\frac{2k+\alpha}{2}}u\|_{L^{2}_{x}},\label{2.59}
	\end{flalign}	
	where $0<\alpha_{1}<1$, $1<\alpha_{2}\leqslant4k-s+\alpha_{3}$ with $0<\alpha_{3}<1$. Since $2k+1<s<2k+2$, it is possible to choose $0<\alpha<1$ such that $s-1+\alpha_{1}=s-2k+\alpha_{2}=2k+\alpha$.
	
	Therefore, combining \eqref{2.58} and \eqref{2.59}, we obtain that
	$$\|V(s)u\|_{L^{r}_{x}}\leqslant C\|(-\Delta_{A})^{\frac{2k+\alpha}{2}}u\|_{L^{2}_{x}},\ 0<\alpha<1,\ k=1,2,....$$
	That is, the proof of Lemma \ref{lemma2.7} is completed.
\end{proof}

\subsection{Construction of the fractional distorted Fourier transforms}

In this subsection, we constructe the distorted Fourier transforms at resolvent points
$$F^{A}u(\xi)=\mathscr{F}((I+V_{x}R^{s}_{0}(\tau))^{-1}u)(\xi)$$
of fractional magnetic Schr\"{o}dinger operators $(-\Delta_{A})^{\frac{s}{2}}$ for any $s>0$. This tool is very useful for the next subsection to obtain the equivalence property in $L^{2}$-norm for $(-\Delta)^{\frac{s}{2}}$ and $(-\Delta_{A})^{\frac{s}{2}}$ with $0<s<\frac{n}{2}$.

To make this structure meaningful, we need to study the operator $V_{x}=(-\Delta_{A})^{\frac{s}{2}}-(-\Delta)^{\frac{s}{2}}$ and the resolvent $R_{0}^{s}(\tau)=(\tau+(-\Delta)^{\frac{s}{2}})^{-1}$ for some $\tau>0$.

\subsubsection{Short range perturbations}
 Firstly, we prove that $V_{x}$ is a short range perturbation of $\mathscr{H}_{0}$, i.e. $V_{x}$ is a bounded and compact operator from $H^{s,-\sigma}$ into $L^{2,\sigma}$ for some $\sigma\geqslant0$ and any $s>0$.

\begin{lemma}\label{lemma2.8}
	Let $A(x)$ satisfy hypothesis \eqref{1.2}, then for $0<s<2$, we have
	\begin{flalign}
		&(-\Delta_{A})^{\frac{s}{2}}u-(-\Delta)^{\frac{s}{2}}u\nonumber\\
		=&c(s)\int^{\infty}_{0}\tau^{\frac{s}{2}}(\tau-\Delta_{A})^{-1}V_{3}(x)(\tau-\Delta)^{-1}ud\tau\nonumber\\
		=&c(s)\int^{\infty}_{0}\tau^{\frac{s}{2}}(\tau-\Delta)^{-1}V_{3}(x)(\tau-\Delta_{A})^{-1}ud\tau,\label{4.1}
	\end{flalign}
	where $V_{3}(x):=-iA\cdot\nabla-i\nabla\cdot A+\arrowvert A\arrowvert^{2}$ and  $(c(s))^{-1}=\int^{\infty}_{0}\tau^{\frac{s}{2}-1}(\tau+1)^{-1}d\tau$.		
\end{lemma}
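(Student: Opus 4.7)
The plan is to start from the Balakrishnan-type integral representation \eqref{2.14}, which the excerpt has already cited, applied to both operators $-\Delta_{A}$ and $-\Delta$. Writing
\begin{equation*}
(-\Delta_{A})^{\frac{s}{2}}u-(-\Delta)^{\frac{s}{2}}u=c(s)\int_{0}^{\infty}\tau^{\frac{s}{2}-1}\bigl[(-\Delta_{A})(\tau-\Delta_{A})^{-1}-(-\Delta)(\tau-\Delta)^{-1}\bigr]u\,d\tau,
\end{equation*}
the first step is to use the algebraic identity $(-\Delta_{A})(\tau-\Delta_{A})^{-1}=I-\tau(\tau-\Delta_{A})^{-1}$ (and the analogous identity for $-\Delta$). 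The identity operators cancel, so the bracketed expression reduces to $-\tau\bigl[(\tau-\Delta_{A})^{-1}-(\tau-\Delta)^{-1}\bigr]$.

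The second step is to apply the standard resolvent identity. Since $-\Delta_{A}=-\Delta+V_{3}(x)$ by the very definition of the magnetic Schr\"odinger operator given at the start of the paper, one obtains
\begin{equation*}
(\tau-\Delta_{A})^{-1}-(\tau-\Delta)^{-1}=-(\tau-\Delta_{A})^{-1}V_{3}(x)(\tau-\Delta)^{-1},
\end{equation*}
where the same identity, read in the opposite order, also yields the factor $-(\tau-\Delta)^{-1}V_{3}(x)(\tau-\Delta_{A})^{-1}$. Inserting either expression and absorbing the extra factor of $\tau$ converts $\tau^{\frac{s}{2}-1}$ into $\tau^{\frac{s}{2}}$, which gives exactly the two representations claimed in the lemma.

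The third step is to justify all the interchanges of operator and integral on a dense domain, for which I would work with $u\in\mathscr{S}(\mathbb{R}^{n})$ (so that all operators are applied to smooth decaying functions). Convergence of the integrals must be checked at both endpoints: near $\tau=0$ the factor $\tau^{\frac{s}{2}}$ with $s>0$ eliminates any singularity coming from the resolvents acting on Schwartz functions, while near $\tau=\infty$ the two $L^{2}\to L^{2}$ bounds $\|(\tau-\Delta)^{-1}\|,\|(\tau-\Delta_{A})^{-1}\|\lesssim \tau^{-1}$ from Lemma \ref{lemma2.5} (and the analogous bound for $\tau-\Delta_{A}$, which also follows from the resolvent identity together with hypothesis \eqref{1.2} on $V_{3}$) give a decay of $\tau^{-2}$ for the operator sandwich, so the integrand is controlled by $\tau^{\frac{s}{2}-2}$, which is integrable precisely in the range $0<s<2$.

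The main obstacle is the second endpoint: one must confirm that the composition $(\tau-\Delta_{A})^{-1}V_{3}(x)(\tau-\Delta)^{-1}$ (and its mirror) makes sense as a bounded operator on $L^{2}$ uniformly on compact subsets of $(0,\infty)$ and decays like $\tau^{-2}$ at infinity. The potentially delicate piece is $V_{3}$, which contains the unbounded first-order terms $-iA\cdot\nabla$ and $-i\nabla\cdot A$; however, under \eqref{1.2} the coefficients $A$, $\nabla A$ are bounded, so $V_{3}(x)(\tau-\Delta)^{-1}$ is a bounded operator on $L^{2}$ uniformly in $\tau\geq 1$ (because $\nabla(\tau-\Delta)^{-1}$ is bounded on $L^{2}$), and this yields the required $\tau^{-2}$ decay. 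Once the integrals are absolutely convergent the formal manipulations are justified by Fubini's theorem, and extension from $\mathscr{S}(\mathbb{R}^{n})$ to the natural domain of $(-\Delta_{A})^{\frac{s}{2}}$ closes the argument.
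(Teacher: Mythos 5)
Your derivation of the identity is correct and uses exactly the same two ingredients as the paper: the Balakrishnan representation \eqref{2.14} and the second resolvent identity with $-\Delta_A=-\Delta+V_3$. The only cosmetic difference is bookkeeping — the paper adds and subtracts $(\tau-\Delta)^{-1}$ inside the single Balakrishnan integral for $(-\Delta_A)^{\frac{s}{2}}$ and lets the compensating $\tau^{\frac{s}{2}-1}V_3(\tau-\Delta)^{-1}u$ terms cancel, whereas you subtract the two Balakrishnan integrals and cancel the identity operators via $(-\Delta_A)(\tau-\Delta_A)^{-1}=I-\tau(\tau-\Delta_A)^{-1}$; these are the same computation.

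One quantitative remark on your convergence discussion (which the paper omits entirely): as written, your justification of the $\tau^{-2}$ decay at infinity does not quite deliver it. Uniform boundedness of $V_3(\tau-\Delta)^{-1}$ on $L^2$ together with $\|(\tau-\Delta_A)^{-1}\|_{L^2\to L^2}\lesssim\tau^{-1}$ only gives $\tau^{-1}$ for the sandwich, and even the sharper multiplier bound $\|\nabla(\tau-\Delta)^{-1}\|_{L^2\to L^2}\lesssim\tau^{-1/2}$ gives only $\tau^{-3/2}$, which makes $\tau^{\frac{s}{2}}\cdot\tau^{-3/2}$ integrable at infinity only for $s<1$. The fix is to use that you are acting on a fixed Schwartz (or $H^1$) function and that $\nabla$ commutes with the constant-coefficient resolvent: $\|V_3(\tau-\Delta)^{-1}u\|_{L^2}\lesssim\tau^{-1}(\|\nabla u\|_{L^2}+\|u\|_{L^2})$, so the integrand is $O(\tau^{\frac{s}{2}-2})$ and the integral converges for all $0<s<2$, as you claimed.
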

\begin{proof}
	Let us recall the formula	
	\begin{equation*}
		(-\Delta_{A})^{\frac{s}{2}}=c(s)(-\Delta_{A})\int^{\infty}_{0}\tau^{\frac{s}{2}-1}(\tau-\Delta_{A})^{-1}d\tau,
	\end{equation*}
	where $(c(s))^{-1}=\int^{\infty}_{0}\tau^{\frac{s}{2}-1}(\tau+1)^{-1}d\tau$ with $0<s<2$ (also can see \cite{I}). Applying the integral representation above, we deduce that
	\begin{flalign*}
		&(-\Delta_{A})^{\frac{s}{2}}u=c(s)(-\Delta_{A})\int^{\infty}_{0}\tau^{\frac{s}{2}-1}(\tau-\Delta_{A})^{-1}ud\tau\\
		=&c(s)(-\Delta_{A})\int^{\infty}_{0}\tau^{\frac{s}{2}-1}((\tau-\Delta_{A})^{-1}-(\tau-\Delta)^{-1})ud\tau\\
		&+c(s)(-\Delta_{A})\int^{\infty}_{0}\tau^{\frac{s}{2}-1}(\tau-\Delta)^{-1}ud\tau\\
		=&-c(s)\int^{\infty}_{0}\tau^{\frac{s}{2}-1}(\arrowvert A \arrowvert^{2}-iA\cdot\nabla-i\nabla\cdot A)(\tau-\Delta)^{-1}ud\tau\\
		&+c(s)\int^{\infty}_{0}\tau^{\frac{s}{2}}(\tau-\Delta_{A})^{-1}(\arrowvert A \arrowvert^{2}-iA\cdot\nabla-i\nabla\cdot A)(\tau-\Delta)^{-1}ud\tau\\
		&+c(s)(-\Delta)\int^{\infty}_{0}\tau^{\frac{s}{2}-1}(\tau-\Delta)^{-1}ud\tau\\
		&+c(s)\int^{\infty}_{0}\tau^{\frac{s}{2}-1}(\arrowvert A \arrowvert^{2}-iA\cdot\nabla-i\nabla\cdot A)(\tau-\Delta)^{-1}ud\tau\\
		=&(-\Delta)^{\frac{s}{2}}u+c(s)\int^{\infty}_{0}\tau^{\frac{s}{2}}(\tau-\Delta_{A})^{-1}(\arrowvert A \arrowvert^{2}-iA\cdot\nabla-i\nabla\cdot A)(\tau-\Delta)^{-1}ud\tau.
	\end{flalign*}
	That is,
	\begin{flalign}
		&(-\Delta_{A})^{\frac{s}{2}}u-(-\Delta)^{\frac{s}{2}}u\nonumber\\
		=&c(s)\int^{\infty}_{0}\tau^{\frac{s}{2}}(\tau-\Delta_{A})^{-1}(\arrowvert A \arrowvert^{2}-iA\cdot\nabla-i\nabla\cdot A)(\tau-\Delta)^{-1}ud\tau\nonumber\\
		=&c(s)\int^{\infty}_{0}\tau^{\frac{s}{2}}(\tau-\Delta_{A})^{-1}V_{3}(x)(\tau-\Delta)^{-1}ud\tau,\label{4.2}
	\end{flalign}
	where $V_{3}(x):=-iA\cdot\nabla-i\nabla\cdot A+\arrowvert A\arrowvert^{2}$.
	
	Because $\Delta_{A}$ and $\Delta$ have the same position in the expression. Using the formula
	\begin{equation*}
		(-\Delta)^{\frac{s}{2}}=c(s)(-\Delta)\int^{\infty}_{0}\tau^{\frac{s}{2}-1}(\tau-\Delta)^{-1}d\tau
	\end{equation*}
	and the similar method above, we obtain
	\begin{flalign}
		(-\Delta_{A})^{\frac{s}{2}}u-(-\Delta)^{\frac{s}{2}}u=c(s)\int^{\infty}_{0}\tau^{\frac{s}{2}}(\tau-\Delta)^{-1}V_{3}(x)(\tau-\Delta_{A})^{-1}ud\tau.\label{4.3}
	\end{flalign}
	
	Combining \eqref{4.2} and \eqref{4.3}, \eqref{4.1} is proved. Hence the proof of Lemma \ref{lemma2.8} is completed.
\end{proof}

\begin{definition}\label{definition2.1}
	An operator $V_{x}$ is said to be a short range perturbation of $\mathscr{H}_{0}$ if $V_{x}$ maps the unit ball in $H^{s,-\sigma}$ into a precompact subset of $L^{2,\sigma}$ with some $\sigma\geqslant0$.
\end{definition}

Applying the integral representation of $V_{x}$ and Lemma \ref{lemma2.8}, we obtain the boundedness and compactness of $V_{x}$ as follows.

\begin{lemma}\label{lemma2.9}
	Let $A(x)$ satisfy hypothesis \eqref{1.2}, then $V_{x}$ is a short range perturbation of $\mathscr{H}_{0}$.
\end{lemma}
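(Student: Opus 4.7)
The plan is to combine the integral representation of $V_x$ from Lemma \ref{lemma2.8} with the weighted resolvent estimates of Lemma \ref{lemma2.5} and those obtained from the resolvent identity in the proof of Lemma \ref{lemma2.6}. This reduces both the boundedness and the compactness of $V_x\colon H^{s,-\sigma}\to L^{2,\sigma}$ to controlling the weighted operator norm of $(\tau-\Delta_A)^{-1}V_3(x)(\tau-\Delta)^{-1}$ and integrating against $\tau^{s/2}d\tau$.

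For the boundedness step, start from
$$V_x u = c(s)\int_0^\infty \tau^{s/2}(\tau-\Delta_A)^{-1}V_3(x)(\tau-\Delta)^{-1}u\,d\tau,$$
and exploit $|A|+|\nabla A|+|\nabla\nabla A|\leqslant C\langle x\rangle^{-\beta}$ with $\beta>n+2$ to factor each of the three pieces in $V_3(x)=-iA\cdot\nabla-i\nabla\cdot A+|A|^2$ as $\langle x\rangle^{-\beta/2}$ times a bounded (at most first-order) operator times $\langle x\rangle^{-\beta/2}$. Inserting weights $\langle x\rangle^{\pm\sigma}$ on the outside, one reduces to controlling products such as $\|\langle x\rangle^{\sigma}(\tau-\Delta_A)^{-1}\langle x\rangle^{-\beta/2}\|$ and $\|\langle x\rangle^{-\beta/2}\nabla(\tau-\Delta)^{-1}\langle x\rangle^{\sigma}\|$. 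These are controlled by Lemma \ref{lemma2.5} together with the $R_A$-analogue derived from the resolvent identity $R_A=R_0(I+V_3R_0)^{-1}$ and the limiting-absorption bound \eqref{2.35}. Splitting $\int_0^\infty=\int_0^1+\int_1^\infty$ ensures convergence exactly as in Steps 1--4 of the proof of Lemma \ref{lemma2.6}, and yields boundedness of $V_x$ from $H^{s,-\sigma}$ into $L^{2,\sigma}$ for a suitable $\sigma\geqslant 0$.

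For the compactness step, approximate $A$ by $A_R:=\chi_R A$, where $\chi_R$ is a smooth cutoff to $\{|x|\leqslant 2R\}$ equal to $1$ on $\{|x|\leqslant R\}$, and let $V_x^{(R)}$ denote the operator obtained by replacing $A$ by $A_R$ throughout the construction. Re-running the boundedness estimate for the difference, whose generating potential is supported in $\{|x|>R\}$ with magnitude $\leqslant CR^{-\beta}$, gives $\|V_x-V_x^{(R)}\|_{H^{s,-\sigma}\to L^{2,\sigma}}\to 0$ as $R\to\infty$. For each fixed $R$, the integrand in the representation of $V_x^{(R)}$ produces outputs that gain one derivative from $R_A(\tau)$ and that are essentially spatially localized to $\{|x|\leqslant 2R\}$ via the compact support of $\chi_R A$; consequently the Rellich--Kondrachov theorem shows that $V_x^{(R)}\colon H^{s,-\sigma}\to L^{2,\sigma}$ is compact. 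Since a norm limit of compact operators is compact, $V_x$ itself is compact, and Definition \ref{definition2.1} is verified.

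The main obstacle will be balancing the weight $\sigma$ against $\beta$ and against the power of $\tau$ so that all three conditions hold simultaneously: the $\tau^{-1+\sigma/2}$-type gain from Lemma \ref{lemma2.5} must absorb the $\tau^{s/2}$ factor near $\tau=0$; the tail at $\tau\to\infty$ must still be integrable after the cost of positive weights on $R_A$; and $\sigma$ must be positive enough for the image to sit in a genuinely weighted $L^2$-space so that Rellich--Kondrachov gives true compactness. The hypothesis $\beta>n+2$ is precisely tailored to make these three demands compatible.
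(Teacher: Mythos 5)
Your boundedness step reproduces the paper's argument: the integral representation from Lemma~\ref{lemma2.8}, the factoring of the decay of $A$ through the pieces of $V_3(x)$, the resolvent identity with \eqref{2.35}, the weighted estimates of Lemma~\ref{lemma2.5}, and the $\int_0^1 + \int_1^\infty$ split. That part is sound and matches Steps 1--4 of the paper's proof.

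The compactness step is where there is a real gap. You propose to cut off $A$ to $A_R=\chi_R A$, show $\|V_x - V_x^{(R)}\|\to 0$ (plausible, since the difference is generated by $(1-\chi_R)A$ which has outer support and amplitude $O(R^{-\beta})$), and then claim $V_x^{(R)}$ is compact because its output is \emph{``essentially spatially localized to $\{|x|\leqslant 2R\}$''} so that Rellich--Kondrachov applies. This last claim is false. In the representation
$$V_x^{(R)}u = c(s)\int_0^\infty \tau^{s/2}(\tau-\Delta_{A_R})^{-1}V_3^R(x)(\tau-\Delta)^{-1}u\,d\tau,$$
the compactly supported multiplier $V_3^R$ sits between two resolvents, both nonlocal, and the outer resolvent $(\tau-\Delta_{A_R})^{-1}$ spreads compact support to all of $\mathbb{R}^n$; moreover the exponential decay of its kernel degenerates as $\tau\to 0$, which is exactly the regime where the integrand is largest. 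So the image of the unit ball of $H^{s,-\sigma}$ under $V_x^{(R)}$ is not supported in any fixed ball, and you cannot invoke the classical Rellich--Kondrachov theorem. To make the fixed-$R$ compactness rigorous you would in any case have to prove that $V_x^{(R)}$ gains a positive weight (not just a derivative) relative to the target $L^{2,\sigma}$ and then invoke the \emph{weighted} compact embedding $H^{\alpha,\sigma'}\hookrightarrow L^{2,\sigma}$ with $\sigma'>\sigma$, $\alpha>0$. But once you have such a weighted smoothing estimate, the cutoff is superfluous: it holds for $V_x$ itself. This is exactly the paper's route: prove $\langle x\rangle^{\delta}V_x\colon H^{s,-\sigma}\to H^{\alpha,-\sigma}$ bounded for some $\delta>1$, write $V_x=\langle x\rangle^{-\delta}\circ(\langle x\rangle^{\delta}V_x)$, and use that the multiplication $\langle x\rangle^{-\delta}\colon H^{\alpha,-\sigma}\to L^{2,\sigma}$ is compact (Agmon class). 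I would recommend replacing the cutoff argument with that decomposition; the weighted estimates you already carry out in the boundedness step are precisely what is needed.
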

\begin{proof}
	First, we prove
	$$\langle x\rangle^{\delta}V_{x}: H^{s,-\sigma}\rightarrow H^{\alpha,-\sigma},$$
	is a bounded operator for $\delta>1$, $\sigma\geqslant0$ and $0<\alpha<\min\{1,s\}$.
	
	From the definition of space norm
	\begin{equation}\label{4.4}
		\|\langle x\rangle^{\delta}V_{x}u\|^{2}_{H^{\alpha,-\sigma}(\mathbb{R}^{n})}=\|\langle x\rangle^{\delta-\sigma}V_{x}u\|^{2}_{L^{2}}+\|\langle x\rangle^{\delta-\sigma}(-\Delta)^{\frac{\alpha}{2}}V_{x}u\|^{2}_{L^{2}},
	\end{equation}	
	we split proof into two parts. Let's recall \eqref{4.1} for $0<s<2$, we have
	\begin{flalign}
		V_{x}u=&((-\Delta_{A})^{\frac{s}{2}}-(-\Delta)^{\frac{s}{2}})u\nonumber\\
		=&c(s)\int^{\infty}_{0}\tau^{\frac{s}{2}}(\tau-\Delta_{A})^{-1}V_{3}(x)(\tau-\Delta)^{-1}ud\tau\nonumber\\
		=&c(s)\int^{\infty}_{0}\tau^{\frac{s}{2}}(\tau-\Delta)^{-1}V_{3}(x)(\tau-\Delta_{A})^{-1}ud\tau,\label{4.5}
	\end{flalign}
	where $V_{3}(x):=-iA\cdot\nabla-i\nabla\cdot A+\arrowvert A\arrowvert^{2}$ and  $(c(s))^{-1}=\int^{\infty}_{0}\tau^{\frac{s}{2}-1}(\tau+1)^{-1}d\tau$ for any $0<s<2$.
	
	For the first term on the right of \eqref{4.4}, we estimate that
	\begin{flalign}
		&\|\langle x\rangle^{\delta-\sigma}V_{x}u\|_{L^{2}(\mathbb{R}^{n})}\nonumber\\
		=&\|\langle x\rangle^{\delta-\sigma}c(s)\int^{\infty}_{0}\tau^{\frac{s}{2}}(\tau-\Delta)^{-1}V_{3}(x)(\tau-\Delta_{A})^{-1}ud\tau\|_{L^{2}(\mathbb{R}^{n})}\nonumber\\
		\leqslant&C\int^{\infty}_{0}\tau^{\frac{s}{2}}\|\langle x\rangle^{\delta-\sigma}(\tau-\Delta)^{-1}\langle x\rangle^{-\beta}(\tau-\Delta_{A})^{-1}u\|_{L^{2}(\mathbb{R}^{n})}d\tau\nonumber\\
		+&C\int^{\infty}_{0}\tau^{\frac{s}{2}}\|\langle x\rangle^{\delta-\sigma}(\tau-\Delta)^{-1}\langle x\rangle^{-\beta}\nabla(\tau-\Delta_{A})^{-1}u\|_{L^{2}(\mathbb{R}^{n})}d\tau.\label{4.6}
	\end{flalign}	
	
	Because the integral of $\tau$, we divide it into two parts, i.e. $\int_{0}^{\infty}=\int_{0}^{1}+\int_{1}^{\infty}$.
	
	$Step\ 1$. When $\tau\in(0,1)$, for the first term on the right of \eqref{4.6}, we have
	\begin{flalign}
		&\|\langle x\rangle^{\delta-\sigma}(\tau-\Delta)^{-1}\langle x\rangle^{-\beta}(\tau-\Delta_{A})^{-1}u\|_{L^{2}}\nonumber\\
		\leqslant&\|\langle x\rangle^{\delta-\sigma}(\tau-\Delta)^{-1}\langle x\rangle^{-\frac{\beta}{2}}\|_{L^{2}\rightarrow L^{2}}\cdot\|\langle x\rangle^{-\frac{\beta}{2}}(\tau-\Delta_{A})^{-1}u\|_{L^{2}}\nonumber\\
		\leqslant&C\tau^{-1+\frac{\frac{\beta}{2}-\delta+\sigma}{2}}\|\langle x\rangle^{-\frac{\beta}{2}}(R_{0}-R_{A}V_{3}(x)R_{0})u\|_{L^{2}}\nonumber\\
		\leqslant&C\tau^{-1+\frac{\frac{\beta}{2}-\delta+\sigma}{2}}(\|\langle x\rangle^{-\frac{\beta}{2}}R_{0}u\|_{L^{2}}+\|\langle x\rangle^{-\frac{\beta}{2}}R_{A}V_{3}(x)R_{0}u\|_{L^{2}})\nonumber\\
		\leqslant&C\tau^{-1+\frac{\frac{\beta}{2}-\delta+\sigma}{2}}(\tau^{-1+\frac{\frac{\beta}{2}-\sigma}{2}}\|u\|_{L^{2,-\sigma}}+\|\langle x\rangle^{-\frac{\beta}{2}} R_{0}(-\Delta)^{\frac{\alpha}{2}}\langle x\rangle^{\sigma}\cdot \langle x\rangle^{-\sigma}u\|_{L^{2}})\nonumber\\
		\leqslant&C\tau^{-1+\frac{\frac{\beta}{2}-\delta+\sigma}{2}}(\tau^{-1+\frac{\frac{\beta}{2}-\sigma}{2}}\|u\|_{L^{2,-\sigma}}+\tau^{-1+\frac{\frac{\beta}{2}-\sigma}{2}}\|u\|_{H^{\alpha,-\sigma}})\nonumber\\
		\leqslant&C\tau^{-2+\frac{\beta-\delta}{2}}\|u\|_{H^{\alpha,-\sigma}}.\label{4.7}
	\end{flalign}
	
	For the second term on the right of \eqref{4.6}, we have
	\begin{flalign}
		&\|\langle x\rangle^{\delta-\sigma}(\tau-\Delta)^{-1}\langle x\rangle^{-\beta}\nabla(\tau-\Delta_{A})^{-1}u\|_{L^{2}}\nonumber\\
		\leqslant&\|\langle x\rangle^{\delta-\sigma}(\tau-\Delta)^{-1}\langle x\rangle^{-\frac{\beta}{2}}\|_{L^{2}\rightarrow L^{2}}\cdot\|\langle x\rangle^{-\frac{\beta}{2}}\nabla(\tau-\Delta_{A})^{-1}u\|_{L^{2}}\nonumber\\
		\leqslant&C\tau^{-1+\frac{\frac{\beta}{2}-\delta+\sigma}{2}}\|\langle x\rangle^{-\frac{\beta}{2}}\nabla(R_{0}-R_{A}V_{3}(x)R_{0})u\|_{L^{2}}\nonumber\\
		\leqslant&C\tau^{-1+\frac{\frac{\beta}{2}-\delta+\sigma}{2}}(\|\langle x\rangle^{-\frac{\beta}{2}}\nabla R_{0}u\|_{L^{2}}+\|\langle x\rangle^{-\frac{\beta}{2}}R_{0}u\|_{L^{2}})\nonumber\\
		\leqslant&C\tau^{-1+\frac{\frac{\beta}{2}-\delta+\sigma}{2}}(\tau^{-1+\frac{\frac{\beta}{2}-\sigma}{2}}\|u\|_{H^{\alpha,-\sigma}}+\tau^{-1+\frac{\frac{\beta}{2}-\sigma}{2}}\|u\|_{L^{2,-\sigma}})\nonumber\\
		\leqslant&C\tau^{-2+\frac{\beta-\delta}{2}}\|u\|_{H^{\alpha,-\sigma}}.\label{4.8}
	\end{flalign}
	
	Substitute \eqref{4.7} and \eqref{4.8} into \eqref{4.6}, we obtain that
	\begin{flalign}
		&\int^{1}_{0}\tau^{\frac{s}{2}}\|\langle x\rangle^{\delta-\sigma}(\tau-\Delta)^{-1}V_{3}(x)(\tau-\Delta_{A})^{-1}u\|_{L^{2}(\mathbb{R}^{n})}d\tau\nonumber\\
		\leqslant&C\int^{1}_{0}\tau^{\frac{s}{2}-2+\frac{\beta-\delta}{2}}d\tau\cdot\|u\|_{H^{\alpha,-\sigma}}\nonumber\\
		\leqslant&C\|u\|_{H^{\alpha,-\sigma}},\label{4.9}
	\end{flalign}
	where the last line of \eqref{4.9} is valid as long as the conditions are met, which is $\beta\geqslant2+\delta-\frac{n}{2}$.
	
	$Step\ 2$. When $\tau>1$, for the first term on the right of \eqref{4.6}, we have
	\begin{flalign}
		&\|\langle x\rangle^{\delta-\sigma}(\tau-\Delta)^{-1}\langle x\rangle^{-\beta}(\tau-\Delta_{A})^{-1}u\|_{L^{2}}\nonumber\\
		\leqslant&\|\langle x\rangle^{\delta-\sigma}(\tau-\Delta)^{-1}\langle x\rangle^{-\frac{\beta}{4}}\|_{L^{2}\rightarrow L^{2}}\cdot\|\langle x\rangle^{-\frac{\beta}{2}}\|_{L^{2}\rightarrow L^{2}}\cdot\|\langle x\rangle^{-\frac{\beta}{4}}(\tau-\Delta_{A})^{-1}u\|_{L^{2}}\nonumber\\
		\leqslant&C\tau^{-1+\frac{\frac{\beta}{4}-\delta+\sigma}{2}}\|\langle x\rangle^{-\frac{\beta}{4}}(R_{0}-R_{A}V_{3}(x)R_{0})u\|_{L^{2}}\nonumber\\
		\leqslant&C\tau^{-1+\frac{\frac{\beta}{4}-\delta+\sigma}{2}}(\|\langle x\rangle^{-\frac{\beta}{4}}R_{0}\langle x\rangle^{\sigma}\cdot \langle x\rangle^{-\sigma}u\|_{L^{2}}+\tau^{-\frac{1}{2}}\|\langle x\rangle^{-\frac{\beta}{4}}\nabla R_{0}u\|_{L^{2}})\nonumber\\
		\leqslant&C\tau^{-1+\frac{\frac{\beta}{4}-\delta+\sigma}{2}}(\tau^{-1+\frac{\frac{\beta}{4}-\sigma}{2}}\|u\|_{L^{2,-\sigma}}+\tau^{-\frac{1}{2}+(-1+\frac{\frac{\beta}{4}-\sigma+1}{2})}\|u\|_{H^{\alpha,-\sigma}})\nonumber\\
		\leqslant&C\tau^{-2+\frac{\beta-2\delta}{4}}\|u\|_{H^{\alpha,-\sigma}}.\label{4.10}
	\end{flalign}
	
	For the second term on the right of \eqref{4.6}, we have
	\begin{flalign}
		&\|\langle x\rangle^{\delta-\sigma}(\tau-\Delta)^{-1}\langle x\rangle^{-\beta}\nabla(\tau-\Delta_{A})^{-1}u\|_{L^{2}}\nonumber\\
		\leqslant&\|\langle x\rangle^{\delta-\sigma}(\tau-\Delta)^{-1}\langle x\rangle^{-\frac{\beta}{4}}\|_{L^{2}\rightarrow L^{2}}\cdot\|\langle x\rangle^{-\frac{\beta}{2}}\|_{L^{2}\rightarrow L^{2}}\cdot\|\langle x\rangle^{-\frac{\beta}{4}}\nabla(\tau-\Delta_{A})^{-1}u\|_{L^{2}}\nonumber\\
		\leqslant&C\tau^{-1+\frac{\frac{\beta}{4}-\delta+\sigma}{2}}\|\langle x\rangle^{-\frac{\beta}{2}}\nabla(R_{0}-R_{A}V_{3}(x)R_{0})u\|_{L^{2}}\nonumber\\
		\leqslant&C\tau^{-1+\frac{\frac{\beta}{4}-\delta+\sigma}{2}}(\|\langle x\rangle^{-\frac{\beta}{4}}\nabla R_{0}u\|_{L^{2}}+\tau^{-\frac{1}{2}}\|\langle x\rangle^{-\frac{\beta}{4}}\nabla R_{0}u\|_{L^{2}})\nonumber\\
		\leqslant&C\tau^{-2+\frac{\beta-2\delta+2-2\alpha}{4}}\|u\|_{H^{\alpha,-\sigma}}.\label{4.11}
	\end{flalign}
	
	Substitute \eqref{4.10} and \eqref{4.11} into \eqref{4.6}, we obtain that
	\begin{flalign}
		&\int^{\infty}_{1}\tau^{\frac{s}{2}}\|\langle x\rangle^{\delta-\sigma}(\tau-\Delta)^{-1}V_{3}(x)(\tau-\Delta_{A})^{-1}u\|_{L^{2}(\mathbb{R}^{n})}d\tau\nonumber\\
		\leqslant&C\int^{1}_{0}\tau^{\frac{s}{2}-2+\frac{\beta-2\delta+2-2\alpha}{4}}d\tau\cdot\|u\|_{H^{\alpha,-\sigma}}\nonumber\\
		\leqslant&C\|u\|_{H^{\alpha,-\sigma}},\label{4.12}
	\end{flalign}
	where the last line of \eqref{4.12} is valid as long as the conditions are met, which is $\beta<2\delta+2+2\alpha-n$.
	
	Next, for the second term on the right of \eqref{4.4}, we estimate that
	\begin{flalign}
		&\|\langle x\rangle^{\delta-\sigma}(-\Delta)^{\frac{\alpha}{2}} V_{x}u\|_{L^{2}(\mathbb{R}^{n})}\nonumber\\
		\leqslant&C\int^{\infty}_{0}\tau^{\frac{s}{2}}\|\langle x\rangle^{\delta-\sigma}(-\Delta)^{\frac{\alpha}{2}}(\tau-\Delta)^{-1}V_{3}(x)(\tau-\Delta_{A})^{-1}u\|_{L^{2}(\mathbb{R}^{n})}d\tau\nonumber\\
		\leqslant&C\int^{\infty}_{0}\tau^{\frac{s}{2}}\|\langle x\rangle^{\delta-\sigma}(-\Delta)^{\frac{\alpha}{2}}(\tau-\Delta)^{-1}\langle x\rangle^{-\beta}(\tau-\Delta_{A})^{-1}u\|_{L^{2}(\mathbb{R}^{n})}d\tau\nonumber\\
		&+C\int^{\infty}_{0}\tau^{\frac{s}{2}}\|\langle x\rangle^{\delta-\sigma}(-\Delta)^{\frac{\alpha}{2}}(\tau-\Delta)^{-1}\langle x\rangle^{-\beta}\nabla(\tau-\Delta_{A})^{-1}u\|_{L^{2}(\mathbb{R}^{n})}d\tau.\label{4.13}
	\end{flalign}
	
	$Step\ 3$. When $\tau\in(0,1)$, for the first term on the right of \eqref{4.13}, we have
	\begin{flalign}
		&\|\langle x\rangle^{\delta-\sigma}(-\Delta)^{\frac{\alpha}{2}}(\tau-\Delta)^{-1}\langle x\rangle^{-\beta}(\tau-\Delta_{A})^{-1}u\|_{L^{2}}\nonumber\\
		\leqslant&\|\langle x\rangle^{\delta-\sigma}(-\Delta)^{\frac{\alpha}{2}}(\tau-\Delta)^{-1}\langle x\rangle^{-\frac{\beta}{2}}\|_{L^{2}\rightarrow L^{2}}\cdot\|\langle x\rangle^{-\frac{\beta}{2}}(\tau-\Delta_{A})^{-1}u\|_{L^{2}}\nonumber\\
		\leqslant&C\tau^{-1+\frac{\frac{\beta}{2}-\delta+\sigma+\alpha}{2}}(\tau^{-1+\frac{\frac{\beta}{2}-\sigma}{2}}\|\langle x\rangle^{-\frac{\beta}{2}}(\tau-\Delta_{A})^{-1}u\|_{L^{2}})\nonumber\\
		\leqslant&C\tau^{-1+\frac{\frac{\beta}{2}-\delta+\sigma+\alpha}{2}}\cdot\tau^{-1+\frac{\frac{\beta}{2}-\sigma}{2}}\|u\|_{H^{\alpha,-\sigma}}\nonumber\\
		\leqslant&C\tau^{-2+\frac{\beta-\delta+\alpha}{2}}\|u\|_{H^{\alpha,-\sigma}}.\label{4.14}
	\end{flalign}
	
	For the second term on the right of \eqref{4.13}, we have
	\begin{flalign}
		&\|\langle x\rangle^{\delta-\sigma}(-\Delta)^{\frac{\alpha}{2}}(\tau-\Delta)^{-1}\langle x\rangle^{-\beta}\nabla(\tau-\Delta_{A})^{-1}u\|_{L^{2}}\nonumber\\
		\leqslant&\|\langle x\rangle^{\delta-\sigma}(-\Delta)^{\frac{\alpha}{2}}(\tau-\Delta)^{-1}\langle x\rangle^{-\frac{\beta}{2}}\|_{L^{2}\rightarrow L^{2}}\cdot\|\langle x\rangle^{-\frac{\beta}{2}}\nabla(\tau-\Delta_{A})^{-1}u\|_{L^{2}}\nonumber\\
		\leqslant&C\tau^{-1+\frac{\frac{\beta}{2}-\delta+\sigma+\alpha}{2}}\|\langle x\rangle^{-\frac{\beta}{2}}\nabla(R_{0}-R_{A}V_{3}(x)R_{0})u\|_{L^{2}}\nonumber\\
		\leqslant&C\tau^{-2+\frac{\beta-\delta+\alpha}{2}}\|u\|_{H^{\alpha,-\sigma}}.\label{4.15}
	\end{flalign}
	
	Combining \eqref{4.14} and \eqref{4.15}, we obtain that
	\begin{flalign}
		&\int^{1}_{0}\tau^{\frac{s}{2}}\|\langle x\rangle^{\delta-\sigma}(-\Delta)^{\frac{\alpha}{2}}(\tau-\Delta)^{-1}V_{3}(x)(\tau-\Delta_{A})^{-1}u\|_{L^{2}(\mathbb{R}^{n})}d\tau\nonumber\\
		\leqslant&C\int^{1}_{0}\tau^{\frac{s}{2}-2+\frac{\beta-\delta+\alpha}{2}}d\tau\cdot\|u\|_{H^{\alpha,-\sigma}}\nonumber\\
		\leqslant&C\|u\|_{H^{\alpha,-\sigma}},\label{4.16}
	\end{flalign}
	where the last line of \eqref{4.16} is valid as long as the conditions are met, which is $\beta\geqslant1+\delta-\frac{n}{2}$.
	
	$Step\ 4$. When $\tau>1$, for the first term on the right of \eqref{4.13}, we have
	\begin{flalign}
		&\|\langle x\rangle^{\delta-\sigma}(-\Delta)^{\frac{\alpha}{2}}(\tau-\Delta)^{-1}\langle x\rangle^{-\beta}(\tau-\Delta_{A})^{-1}u\|_{L^{2}}\nonumber\\
		\leqslant&\|\langle x\rangle^{\delta-\sigma}(-\Delta)^{\frac{\alpha}{2}}(\tau-\Delta)^{-1}\langle x\rangle^{-\frac{\beta}{4}}\|_{L^{2}\rightarrow L^{2}}\cdot\|\langle x\rangle^{-\frac{\beta}{2}}\|_{L^{2}\rightarrow L^{2}}\cdot\|\langle x\rangle^{-\frac{\beta}{4}}(\tau-\Delta_{A})^{-1}u\|_{L^{2}}\nonumber\\
		\leqslant&C\tau^{-1+\frac{\frac{\beta}{4}-\delta+\sigma+\alpha}{2}}\|\langle x\rangle^{-\frac{\beta}{4}}(\tau-\Delta_{A})^{-1}u\|_{L^{2}}\nonumber\\
		\leqslant&C\tau^{-2+\frac{\beta-2\delta+2\alpha}{4}}\|u\|_{H^{\alpha,-\sigma}}.\label{4.17}	
	\end{flalign}
	
	For the second term on the right of \eqref{4.13}, we have
	\begin{flalign}
		&\|\langle x\rangle^{\delta-\sigma}(-\Delta)^{\frac{\alpha}{2}}(\tau-\Delta)^{-1}\langle x\rangle^{-\beta}\nabla(\tau-\Delta_{A})^{-1}u\|_{L^{2}}\nonumber\\
		\leqslant&\|\langle x\rangle^{\delta-\sigma}(-\Delta)^{\frac{\alpha}{2}}(\tau-\Delta)^{-1}\langle x\rangle^{-\frac{\beta}{4}}\|_{L^{2}\rightarrow L^{2}}\cdot\|\langle x\rangle^{-\frac{\beta}{4}}\nabla(\tau-\Delta_{A})^{-1}u\|_{L^{2}}\nonumber\\
		\leqslant&C\tau^{-2+\frac{\beta-2\delta+2\alpha}{4}}\|u\|_{H^{\alpha,-\sigma}}.\label{4.18}
	\end{flalign}
	
	Combining \eqref{4.17} and \eqref{4.18}, we obtain that
	\begin{flalign}
		&\int^{\infty}_{1}\tau^{\frac{s}{2}}\|\langle x\rangle^{\delta-\sigma}(-\Delta)^{\frac{\alpha}{2}}(\tau-\Delta)^{-1}V_{3}(x)(\tau-\Delta_{A})^{-1}u\|_{L^{2}(\mathbb{R}^{n})}d\tau\nonumber\\
		\leqslant&C\int^{1}_{0}\tau^{\frac{s}{2}-2+\frac{\beta-2\delta+2\alpha}{4}}d\tau\cdot\|u\|_{H^{\alpha,-\sigma}}\nonumber\\
		\leqslant&C\|u\|_{H^{\alpha,-\sigma}},\label{4.19}
	\end{flalign}
	where the last line of \eqref{4.19} is valid as long as the conditions are met, which is $\beta<2\delta+2\alpha-n$.
	
	The estimates of $Step\ 1$-$Step\ 4$  can be obtained
	$$\|\langle x\rangle^{\delta}V_{x}u\|_{H^{\alpha,-\sigma}(\mathbb{R}^{n})}\leqslant C\|u\|_{H^{s,-\sigma}}, \ 0<\alpha<\min\{1,s\}.$$
	
	Furthermore, when $n>3$ with $2k+1<s<2k+2$, $k=1,2,...$, from \eqref{2.54} in Lemma \ref{lemma2.7} and split $s=s-k+k$, using the similar methods as $n\leqslant3$, \eqref{4.4} can also be estimated. In fact, the high-dimensional and low-dimensional processing methods are the same, hence we omit the proof of this part here.
	
	Therefore, the boundedness of operator $$\langle x\rangle^{\delta}V_{x}: H^{s,-\sigma}\rightarrow H^{\alpha,-\sigma},$$
	is completed for $\delta>1$, $\sigma\geqslant0$ and $0<\alpha<\min\{1,s\}$.
	
	Finally, we shall prove the compactness of operator $V_{x}$. Recall the fact that $\mathbb{A}\circ\mathbb{B}$ is a compact operator if $\mathbb{A}$ is a compact operator and $\mathbb{B}$ is a bounded operator. Write $V_{x}=\langle x\rangle^{-\delta}\cdot\langle x\rangle^{\delta}V_{x}$ with $\delta>1$, and we have proved the boundedness of operator $\langle x\rangle^{\delta}V_{x}$, next the compactness of operator $\langle x\rangle^{-\delta}: H^{\alpha,-\sigma}\rightarrow L^{2,\sigma}$ shall be dealt with. When $\delta>1$, the operator $\langle x\rangle^{-\delta}$ is a multiplication operator, and belongs to Agmon potential class (see \cite{Ag}). Then, the compactness of operator $\langle x\rangle^{-\delta}: H^{\alpha,-\sigma}\rightarrow L^{2,\sigma}$ is obvious.
	
	Combining the boundedness of $\langle x\rangle^{\delta}V_{x}: H^{s,-\sigma}\rightarrow H^{\alpha,-\sigma}$ and the compactness of $\langle x\rangle^{-\delta}: H^{\alpha,-\sigma}\rightarrow L^{2,\sigma}$, we have the compactness of $V_{x}:H^{s,-\sigma}\rightarrow L^{2,\sigma}$. Therefore, the proof of Lemma \ref{lemma2.9} is completed.
\end{proof}

\subsubsection{The boundary values of the resolvent}
Next, the following lemmas study the resolvent operator $R_{0}^{s}(\tau)=(\tau+(-\Delta)^{\frac{s}{2}})^{-1}$.
\begin{lemma}\label{lemma2.10}
	Let $R_{0}^{s}(\tau)=(\tau+(-\Delta)^{\frac{s}{2}})^{-1}$, then $R_{0}^{s}(\tau)$ is a bounded operator from $L^{2,\sigma}$ to $H^{s,-\sigma}$ for any $s>0$, some $\sigma\geqslant0$ and $\tau>0$.
\end{lemma}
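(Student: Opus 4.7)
The plan is to reduce everything to Plancherel. Since $R_0^s(\tau)$ is a Fourier multiplier with symbol $(\tau+|\xi|^s)^{-1}$, which is a bounded function of $\xi\in\mathbb{R}^n$ for every fixed $\tau>0$ and $s>0$ (indeed $|(\tau+|\xi|^s)^{-1}|\leqslant \tau^{-1}$), and since $(-\Delta)^{s/2}R_0^s(\tau)$ corresponds to the symbol $|\xi|^s/(\tau+|\xi|^s)$ which is pointwise bounded by $1$, both $R_0^s(\tau)$ and $(-\Delta)^{s/2}R_0^s(\tau)$ are bounded operators on $L^2(\mathbb{R}^n)$. The resolvent identity
\[
(-\Delta)^{s/2}R_0^s(\tau) = I-\tau R_0^s(\tau)
\]
gives the second bound independently of Plancherel if one prefers.

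Following the same convention used in \eqref{4.4} for the weighted Sobolev norms, I would then write
\[
\|R_0^s(\tau) u\|_{H^{s,-\sigma}}^2
=\|\langle x\rangle^{-\sigma}R_0^s(\tau)u\|_{L^2}^2
+\|\langle x\rangle^{-\sigma}(-\Delta)^{s/2}R_0^s(\tau)u\|_{L^2}^2.
\]
For $\sigma\geqslant 0$ the weight satisfies $\langle x\rangle^{-\sigma}\leqslant 1$, so both summands are controlled by their unweighted counterparts. Combining this with the two $L^2\to L^2$ bounds from the previous paragraph yields
\[
\|R_0^s(\tau) u\|_{H^{s,-\sigma}} \leqslant \bigl(\tau^{-1}+1\bigr)\|u\|_{L^2}.
\]
Finally, since $\sigma\geqslant 0$ implies $\langle x\rangle^{\sigma}\geqslant 1$, one has the trivial embedding $\|u\|_{L^2}\leqslant \|u\|_{L^{2,\sigma}}$, which closes the estimate.

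There is no real analytic obstacle here: the only subtlety is a notational one, namely to pin down the meaning of $H^{s,-\sigma}$ for non-integer $s$ consistently with the convention already in use in \eqref{4.4}, after which the argument reduces to the elementary Fourier multiplier bounds above. The constant produced in this way is $C(\tau,\sigma)\sim 1+\tau^{-1}$; the lemma only asserts boundedness for each fixed $\tau>0$, so no uniformity in $\tau$ is needed and this dependence is acceptable.
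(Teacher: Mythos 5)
Your proposal is correct and follows essentially the same route as the paper: both arguments reduce the claim to Plancherel-type symbol bounds for the multipliers $(\tau+\arrowvert\xi\arrowvert^{s})^{-1}$ and the $s$-th order derivative of $R_{0}^{s}(\tau)$, and then discard the weights via $\langle x\rangle^{-\sigma}\leqslant1\leqslant\langle x\rangle^{\sigma}$. The only cosmetic difference is that the paper bounds the derivative part through the inhomogeneous symbol $(1+\arrowvert\xi\arrowvert^{2})^{\frac{s}{2}}(\tau+\arrowvert\xi\arrowvert^{s})^{-1}\leqslant C\max\{\tau^{-1},1\}$ rather than your homogeneous bound $\arrowvert\xi\arrowvert^{s}(\tau+\arrowvert\xi\arrowvert^{s})^{-1}\leqslant1$, which changes nothing of substance.
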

\begin{proof}
	From the Parseval identity, we have
	\begin{flalign*}	&\|R_{0}^{s}(\tau)u\|_{L^{2,-\sigma}}\leqslant C\|R_{0}^{s}(\tau)u\|_{L^{2}}=\|\mathscr{F}(R_{0}^{s}u)\|_{L^{2}}\nonumber\\
		=&\|\frac{\hat{u}}{\arrowvert\xi\arrowvert^{s}+\tau}\|_{L^{2}}\leqslant C\cdot max\{\frac{1}{\tau}, 1\}\|u\|_{L^{2}}\leqslant C\cdot max\{\frac{1}{\tau}, 1\}\|u\|_{L^{2,\sigma}}.
	\end{flalign*}
	Furthermore, using the Parseval identity again, we have
	\begin{flalign*}	&\|R_{0}^{s}(\tau)u\|_{H^{s,-\sigma}}\leqslant\|R_{0}^{s}(\tau)u\|_{H^{s}}=\|(I-\Delta)^{\frac{s}{2}}R_{0}^{s}(\tau)u\|_{L^{2}}=\|\mathscr{F}((I-\Delta)^{\frac{s}{2}}R_{0}^{s}u)\|_{L^{2}}\nonumber\\
		=&\|(1+\arrowvert\xi\arrowvert^{2})^{\frac{s}{2}}\cdot\frac{\hat{u}}{\tau+\arrowvert\xi\arrowvert^{s}}\|_{L^{2}}\leqslant C\cdot max\{\frac{1}{\tau}, 1\}\|u\|_{L^{2}}\leqslant C\cdot max\{\frac{1}{\tau}, 1\}\|u\|_{L^{2,\sigma}}.
	\end{flalign*}
	Therefore the proof of Lemma \ref{lemma2.10} is completed.
\end{proof}

Combining the property of $V_{x}$, which is a bounded operator from $H^{s,-\sigma}$ into $L^{2,\sigma}$ in Lemma \ref{lemma2.9}, and the property of $R_{0}^{s}(\tau)$, which is a bounded operator from $L^{2,\sigma}$ to $H^{s,-\sigma}$, for some $\sigma\geqslant0$ and any $s>0$, we can construct the distorted Fourier transforms at resolvent points as follows. 

\begin{lemma}\label{lemma2.11}
	If $\tau\in\mathbb{R}^{+}$ and $V_{x}$ is a short range perturbation of $\mathscr{H}_{0}$, then $(I+V_{x}R_{0}^{s}(\tau))^{-1}$ exist and is continuous in $L^{2,\sigma}$. Moreover, the map
	\begin{equation}\label{2.60}
		\mathbb{R}^+\ni\tau\mapsto(I+V_{x}R_{0}^{s}(\tau))^{-1}u\in L^{2,\sigma}
	\end{equation}
	is continuous if $u\in L^{2,\sigma}$ for any $s>0$ and some $\sigma\geqslant0$.
\end{lemma}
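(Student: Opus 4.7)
The plan is to combine the Fredholm alternative on $L^{2,\sigma}$ with a perturbation argument in $\tau$. The first key observation is that $V_{x}R_{0}^{s}(\tau)$ is a compact operator on $L^{2,\sigma}$ for each $\tau>0$: by Lemma \ref{lemma2.10} the resolvent $R_{0}^{s}(\tau)$ maps $L^{2,\sigma}$ boundedly into $H^{s,-\sigma}$, and by Lemma \ref{lemma2.9} the operator $V_{x}:H^{s,-\sigma}\to L^{2,\sigma}$ is compact, so the composition is compact on $L^{2,\sigma}$.

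Next I would check injectivity of $I+V_{x}R_{0}^{s}(\tau)$. The relevant algebraic identity, using $V_{x}=(-\Delta_{A})^{s/2}-(-\Delta)^{s/2}$, is
\[
\bigl(\tau+(-\Delta_{A})^{s/2}\bigr)R_{0}^{s}(\tau)=\bigl(\tau+(-\Delta)^{s/2}+V_{x}\bigr)R_{0}^{s}(\tau)=I+V_{x}R_{0}^{s}(\tau).
\]
If $u\in L^{2,\sigma}$ satisfies $(I+V_{x}R_{0}^{s}(\tau))u=0$, then $w:=R_{0}^{s}(\tau)u\in H^{s,-\sigma}$ is a solution of $(\tau+(-\Delta_{A})^{s/2})w=0$. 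Because $\tau>0$ and $\mathscr{H}$ is a nonnegative self-adjoint realization of $(-\Delta_{A})^{s/2}$ on $L^{2}(\mathbb{R}^{n})$, the operator $\tau+\mathscr{H}$ is strictly positive, hence injective on its domain; this forces $w=0$ and then $u=(\tau+(-\Delta)^{s/2})w\equiv 0$ in the distributional sense. The Fredholm alternative applied to the compact operator $V_{x}R_{0}^{s}(\tau)$ on $L^{2,\sigma}$ then yields that $I+V_{x}R_{0}^{s}(\tau)$ is invertible in $\mathcal{B}(L^{2,\sigma})$.

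For the continuity of the map $\tau\mapsto (I+V_{x}R_{0}^{s}(\tau))^{-1}u$, I would start from the resolvent identity
\[
R_{0}^{s}(\tau_{1})-R_{0}^{s}(\tau_{2})=(\tau_{2}-\tau_{1})\,R_{0}^{s}(\tau_{1})R_{0}^{s}(\tau_{2}),
\]
and combine it with Lemmas \ref{lemma2.9} and \ref{lemma2.10} to show that $\tau\mapsto V_{x}R_{0}^{s}(\tau)$ is continuous as a map from $\mathbb{R}^{+}$ into $\mathcal{B}(L^{2,\sigma})$. Writing $I+V_{x}R_{0}^{s}(\tau_{2})=(I+V_{x}R_{0}^{s}(\tau_{1}))\bigl(I+(I+V_{x}R_{0}^{s}(\tau_{1}))^{-1}V_{x}(R_{0}^{s}(\tau_{2})-R_{0}^{s}(\tau_{1}))\bigr)$ and expanding the inner factor via a Neumann series for $|\tau_{2}-\tau_{1}|$ sufficiently small transfers the continuity to the inverse, which gives the stated continuity of $\tau\mapsto (I+V_{x}R_{0}^{s}(\tau))^{-1}u$ in $L^{2,\sigma}$.

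I expect the most delicate step to be the injectivity argument: one must verify that $w=R_{0}^{s}(\tau)u$ lies in the $L^{2}$-domain of $\mathscr{H}$, so that the relation $(\tau+\mathscr{H})w=0$ has an unambiguous operator-theoretic meaning and positivity can genuinely be invoked. This requires combining the mapping property $R_{0}^{s}(\tau):L^{2,\sigma}\to H^{s,-\sigma}$ from Lemma \ref{lemma2.10} with the self-adjointness and strict positivity of $\mathscr{H}$ recorded in the Notation section and the spectral information $\sigma(\Delta_{A})=(-\infty,0)$ proved in \eqref{1.3}. The rest of the argument is essentially bookkeeping with compactness and the Fredholm alternative.
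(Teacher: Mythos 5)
Your argument is correct and follows essentially the paper's own route: compactness of $V_xR_0^s(\tau)$ on $L^{2,\sigma}$ via Lemmas \ref{lemma2.9} and \ref{lemma2.10}, injectivity from the identity $(\tau+(-\Delta_A)^{s/2})R_0^s(\tau)=I+V_xR_0^s(\tau)$ (the paper's \eqref{2.61} read backwards), and the Fredholm alternative for invertibility and continuity in $\tau$. The domain concern you flag for $w=R_0^s(\tau)u$ is in fact the first line of the paper's proof — since $L^{2,\sigma}\subset L^2$ for $\sigma\geqslant0$, one has $R_0^s(\tau)u\in H^s$, the $L^2$-domain of $\mathscr{H}$ — and your Neumann-series treatment of the continuity step, once operator-norm continuity of $\tau\mapsto V_xR_0^s(\tau)$ is read off the resolvent identity, is a minor and equally valid variant of the paper's strong-continuity-plus-precompactness argument.
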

\begin{proof}
	When $u\in L^{2,\sigma}$ we have $R_0^{s}(\tau)u\in H^s$, and it is easy to deduce that
	\begin{equation}\label{2.61}
		R_0^{s}(\tau)u=R^{s}_{A}(\tau)(I+V_{x}R_0^{s}(\tau))u,
	\end{equation}
	where $I+V_{x}R_0^{s}(\tau)$ can be interpreted as an operator in $L^{2,\sigma}$. Thus $I+V_{x}R_0^{s}(\tau)$ has kernel $\{0\}$, which is also true when $\tau\in\mathbb{R}^{+}$. By Fredholm theory, $(I+V_{x}R_0^{s}(\tau))^{-1}$ is bounded in $L^{2,\sigma}$ and strongly continuous in $\tau\in\mathbb{R}^{+}$, where the proof needs the facts that  $\mathbb{R}^+\ni\tau\mapsto V_{x}R_0^{s}(\tau)u\in L^{2,\sigma}$ is continuous when  $u\in L^{2,\sigma}$ and $\{V_{x}R_0^{s}(\tau)u;~\|u\|_{L^{2,\sigma}}\leqslant1,\,\tau>0\}$  is precompact in $L^{2,\sigma}$ (see Lemma \ref{lemma2.9} and Lemma \ref{lemma2.10}). Therefore the proof of Lemma \ref{lemma2.11} is completed.
\end{proof}

\begin{definition}\label{definition2.2}
	If $u\in L^{2,\sigma}$ for $\sigma\geqslant0$ and $\tau>0$. then the $L^{2}$ functions defined by
	\begin{equation}\label{2.62}
		F^{A}u(\xi)=\mathscr{F}((I+V_{x}R_0^{s}(\tau ))^{-1}u)(\xi)
	\end{equation}
	is called the distorted Fourier transforms of $u$.
\end{definition}

\begin{lemma}\label{lemma2.12} (Quasi-intertwining property)
	
	If $u\in L^{2,\sigma}$ with $\sigma\geqslant0$, then we have
	\begin{equation}\label{2.63}
		\arrowvert\xi\arrowvert^{s}\mathscr{F} u=F^{A}\mathscr{H}u+\tau(F^{A}u-\mathscr{F}u),
	\end{equation}
	for $s>0$ and $\tau>0$.
\end{lemma}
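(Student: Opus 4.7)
The plan is to derive \eqref{2.63} by pure operator algebra, starting from the definition $F^{A}u=\mathscr{F}((I+V_{x}R_{0}^{s}(\tau))^{-1}u)$, writing $\mathscr{H}=\mathscr{H}_{0}+V_{x}$, and exploiting the intertwining identity
\[
(R_{0}^{s})^{-1}+V_{x}=(I+V_{x}R_{0}^{s})(R_{0}^{s})^{-1},
\]
which follows instantly upon multiplying out on the right. First I would argue the domain issue: by Lemma \ref{lemma2.11} and the density arguments used there, it suffices to prove the identity for $u$ in a dense subset (e.g.\ $\mathscr{S}(\mathbb{R}^{n})$ or $D(\mathscr{H})\cap L^{2,\sigma}$) on which $\mathscr{H}u$, $\mathscr{H}_{0}u$ and $V_{x}u$ all lie in $L^{2,\sigma}$, so that $(I+V_{x}R_{0}^{s})^{-1}$ and $\mathscr{F}$ act legitimately on every term, and then extend to $L^{2,\sigma}$ by continuity.

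Given such a $u$, the computational core is to decompose
\[
\mathscr{H}u=(\mathscr{H}_{0}+V_{x})u=\bigl((R_{0}^{s})^{-1}-\tau+V_{x}\bigr)u,
\]
and apply $(I+V_{x}R_{0}^{s})^{-1}$. By the intertwining identity above, $(I+V_{x}R_{0}^{s})^{-1}\bigl((R_{0}^{s})^{-1}+V_{x}\bigr)=(R_{0}^{s})^{-1}$, whence
\[
(I+V_{x}R_{0}^{s})^{-1}\mathscr{H}u=(R_{0}^{s})^{-1}u-\tau(I+V_{x}R_{0}^{s})^{-1}u.
\]
Now I would take the Fourier transform of both sides. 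Since $\mathscr{F}\circ\mathscr{H}_{0}$ acts as multiplication by $|\xi|^{s}$, we have $\mathscr{F}((R_{0}^{s})^{-1}u)=(\tau+|\xi|^{s})\mathscr{F}u$, and by definition $\mathscr{F}((I+V_{x}R_{0}^{s})^{-1}u)=F^{A}u$ and $\mathscr{F}((I+V_{x}R_{0}^{s})^{-1}\mathscr{H}u)=F^{A}\mathscr{H}u$. Substituting and rearranging yields \eqref{2.63}.

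The main obstacle is not algebraic but functional-analytic: ensuring $\mathscr{H}u\in L^{2,\sigma}$ so that $F^{A}\mathscr{H}u$ is meaningful, and verifying that the manipulation $(R_{0}^{s})^{-1}u=(\mathscr{H}_{0}+\tau)u$ is justified (i.e.\ $u\in D(\mathscr{H}_{0})$). I would handle this by first working on $\mathscr{S}(\mathbb{R}^{n})$, where all operators act classically and all quantities lie in weighted $L^{2}$ spaces by hypothesis \eqref{1.2} on $A$, and then appealing to the continuity of $(I+V_{x}R_{0}^{s}(\tau))^{-1}$ on $L^{2,\sigma}$ from Lemma \ref{lemma2.11}, together with the $L^{2,\sigma}\to L^{2,-\sigma}$ boundedness of the relevant operators, to close the identity on the full space.
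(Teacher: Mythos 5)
Your proposal is essentially the paper's own argument, just written with the inverse applied first and the Fourier transform second: the identity $(R_0^s)^{-1}+V_x=(I+V_xR_0^s)(R_0^s)^{-1}$ that you invert is exactly the paper's $(\mathscr{H}+\tau)u=(I+V_xR_0^s(\tau))(\mathscr{H}_0+\tau)u$, and both lead immediately to $F^A(\mathscr{H}+\tau)u=(|\xi|^s+\tau)\hat u$. The one place where the paper is more careful than your sketch is the choice of dense subset: it works on $\mathscr{F}^{-1}C_c^\infty(\mathbb{R}^n\setminus\{0\})$ rather than $\mathscr{S}(\mathbb{R}^n)$, because for non-even $s$ the symbol $|\xi|^s$ is not smooth at $\xi=0$, so $(-\Delta)^{s/2}u$ is generally not Schwartz (it decays only like $|x|^{-n-s}$); with the paper's choice $(\mathscr{H}_0+\tau)u$ genuinely lies in $\mathscr{S}$, making the weighted-$L^2$ bookkeeping automatic before passing to $H^s$ by closedness of $\mathscr{H}$ and then to $L^2$ by density.
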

\begin{proof}
	We first consider $u\in\mathscr{F}^{-1}C_c^\infty(\mathbb{R}^n\setminus\{0\})$. For any $\tau\in\mathbb{R}^{+}$, we have $(\mathscr{H}_0+\tau)u\in\mathscr{S}$ and thus $R_0^{s}(\tau)(\mathscr{H}_0+\tau)u=u$ by weak* continuity of $R_0^{s}$ and dominated convergence. Using the notation of $V_{x}$, we have $$(\mathscr{H}+\tau)u=(\mathscr{H}_{0}+\tau)u+V_{x}u.$$
	Thus
	\begin{equation*}
		(\mathscr{H}+\tau)u=(I+V_{x}R_0^{s}(\tau))(\mathscr{H}_0+\tau)u,
	\end{equation*}
	which implies
	\begin{equation*}
		(F^{A}(\mathscr{H}+\tau)u)(\xi)=(\arrowvert\xi\arrowvert^s+\tau)\hat{u}(\xi),
	\end{equation*}
	and this just means
	\begin{equation}\label{2.64}
		\arrowvert\xi\arrowvert^s\hat{u}(\xi)=F^{A}\mathscr{H}u(\xi)+\tau(F^{A}u(\xi)-\hat{u}(\xi)).
	\end{equation}
	Since $\mathscr{F}^{-1}C_c^\infty(\mathbb{R}^n\setminus\{0\})$ is dense in $H^s$ and $\mathscr{H}$ is closed, \eqref{2.64} holds when $u\in H^s$. Therefore \eqref{2.63} holds for $H^s$ is dense in $L^2$ and the proof of Lemma \ref{lemma2.12} is completed.
\end{proof}

\begin{remark}\label{remark2.2}
Similarly, we can construct the distorted Fourier transforms at spectral points, the boundness of operators $V_{x}$ and $R_0^{s}(\lambda\pm i0)$ need to be discussed, see our other paper \cite{DW} for detailed treatment. The $L^{2}$ functions defined by
\begin{equation*}
	F_\pm^{A}u(\xi)=\mathscr{F}((I+V_{x}R_0^{s}(\lambda\pm i0 ))^{-1}u)(\xi)
\end{equation*}
almost everywhere in $M_\lambda=\{\xi\in\mathbb{R}^n;~\arrowvert\xi\arrowvert^s=\lambda\}$ is called the distorted Fourier transforms of $u$. And the intertwining property of it has been proved in \cite{DW} as following
\begin{equation}\label{4.36}
F_\pm^{A}\mathscr{H}u=\arrowvert\xi\arrowvert^sF_\pm^{A}u,
\end{equation}
which will be uesd later.
\end{remark}

\subsection{Application of the distorted Fourier transforms}
In this subsection, we use the quasi-intertwining property of the distorted Fourier transforms at resolvent points, the estimates of $V_{x}$ and $R_{0}^{s}(\tau)$ to obtain the equivalence of the operators $(-\Delta)^{\frac{s}{2}}$ and $(-\Delta_{A})^{\frac{s}{2}}$ for $0<s<\frac{n}{2}$. This equivalence property is important because it is the link between operators $\arrowvert J(t)\arrowvert^{s}u$ and $\arrowvert J_{A}(t)\arrowvert^{s}u$.

Furthermore, applying the intertwining property of the distorted Fourier transforms at the spectral points in \cite{DW}, we obtain the estimate of the solutions of the linear magnetic Schr\"{o}dinger equations in $L^{\infty}$ space. This estimate is the key for the proof of Theorem\ref{theorem1.1}, which links between the $L^{\infty}$-norm of solutions and the Strichartz estimate of $\arrowvert J_{A}(t)\arrowvert^{s}u$.

\begin{lemma}\label{lemma2.13}
	Let $A(x)$ satisfy hypothesis \eqref{1.2}, then for $0<s<\frac{n}{2}$, we have
	\begin{equation}\label{2.65}
		\|(-\Delta)^{\frac{s}{2}}u\|_{L^{2}(\mathbb{R}^{n})}\sim \|(-\Delta_{A})^{\frac{s}{2}}u\|_{L^{2}(\mathbb{R}^{n})}.
	\end{equation}
\end{lemma}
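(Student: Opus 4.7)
The plan is to exploit the quasi-intertwining identity from Lemma \ref{lemma2.12} algebraically and then pass to the limit $\tau\to0^+$. First, I would rearrange \eqref{2.63} as
\begin{equation*}
F^{A}((\mathscr{H}+\tau)u)(\xi)=(\arrowvert\xi\arrowvert^{s}+\tau)\hat u(\xi),
\end{equation*}
which, after applying $\mathscr{F}^{-1}$ and using $F^{A}=\mathscr{F}\circ(I+V_{x}R_{0}^{s}(\tau))^{-1}$, becomes the operator identity
\begin{equation*}
((-\Delta)^{s/2}+\tau)u=(I+V_{x}R_{0}^{s}(\tau))^{-1}((-\Delta_{A})^{s/2}+\tau)u,\qquad u\in\mathscr{S}(\mathbb{R}^{n}),\ \tau>0.
\end{equation*}
Running the same manipulation in reverse yields the companion identity $((-\Delta_{A})^{s/2}+\tau)u=(I+V_{x}R_{0}^{s}(\tau))((-\Delta)^{s/2}+\tau)u$, which one can also verify directly by expanding the right-hand side and using $R_{0}^{s}(\tau)((-\Delta)^{s/2}+\tau)=I$ together with the definition of $V_{x}$.

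Next I would take $L^{2}$-norms. Setting $C(\tau):=\|(I+V_{x}R_{0}^{s}(\tau))^{-1}\|_{L^{2}\to L^{2}}$ (finite for each fixed $\tau>0$ by Lemma \ref{lemma2.11} with $\sigma=0$) and using that $(-\Delta)^{s/2}$ is non-negative, so that $\|(-\Delta)^{s/2}u\|_{L^{2}}\leqslant\|((-\Delta)^{s/2}+\tau)u\|_{L^{2}}$, the first identity yields
\begin{equation*}
\|(-\Delta)^{s/2}u\|_{L^{2}}\leqslant C(\tau)\bigl(\|(-\Delta_{A})^{s/2}u\|_{L^{2}}+\tau\|u\|_{L^{2}}\bigr).
\end{equation*}
For $u\in\mathscr{S}$ the $L^{2}$-norm is finite, so the $\tau\|u\|_{L^{2}}$ remainder vanishes when $\tau\to0^{+}$, provided $C(\tau)$ stays bounded along this limit. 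The reverse inequality will follow by the symmetric argument applied to the companion identity, using that $\|I+V_{x}R_{0}^{s}(\tau)\|_{L^{2}\to L^{2}}\leqslant1+\|V_{x}\|_{H^{s}\to L^{2}}\cdot\|R_{0}^{s}(\tau)\|_{L^{2}\to H^{s}}$ is bounded by Lemmas \ref{lemma2.9}--\ref{lemma2.10}. A density argument (since $\mathscr{S}$ is dense in the natural domain $\dot H^{s}\cap\dot H_{A}^{s}$) will then extend the equivalence.

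The main obstacle is the uniform bound $\limsup_{\tau\to0^{+}}C(\tau)<\infty$, and this is precisely where the restriction $0<s<\tfrac{n}{2}$ is used. Heuristically $R_{0}^{s}(\tau)\to(-\Delta)^{-s/2}$ as $\tau\to0^{+}$, the limit being the Riesz potential of order $s$; for $0<s<\tfrac{n}{2}$ this maps $L^{2}$ into $L^{2n/(n-2s)}$, so that composing with the short-range operator $V_{x}$ from Lemma \ref{lemma2.9} yields a compact operator on an appropriate weighted $L^{2}$-space. I would then show $V_{x}R_{0}^{s}(\tau)\to V_{x}(-\Delta)^{-s/2}$ in operator norm and invoke Fredholm theory, combined with the hypothesis recalled in the Introduction (via \cite{KK}) that $0$ is neither an eigenvalue nor a resonance of $\Delta_{A}$, to conclude that $I+V_{x}(-\Delta)^{-s/2}$ is invertible. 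Controlling the norms uniformly across the $\tau\to0^{+}$ transition through the weighted space is the delicate point, but the no-resonance assumption is exactly what allows the Fredholm alternative to give the required uniform bound on $C(\tau)$.
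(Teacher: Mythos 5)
Your starting identities are correct and are exactly what the paper extracts from Lemma \ref{lemma2.12}, but the step on which your whole argument rests --- $\limsup_{\tau\to0^{+}}C(\tau)<\infty$ for $C(\tau)=\|(I+V_{x}R_{0}^{s}(\tau))^{-1}\|_{L^{2}\to L^{2}}$ --- is a genuine gap, and nothing you cite supplies it. Lemma \ref{lemma2.11} gives invertibility in $L^{2,\sigma}$ for each fixed $\tau>0$ only, with no uniformity near the threshold; a uniform bound up to $\tau=0$ would amount to a zero-energy limiting absorption statement for the fractional Birman--Schwinger operator $I+V_{x}(-\Delta)^{-s/2}$, and the hypothesis you invoke from \cite{KK} concerns distributional solutions of $-\Delta_{A}u=0$, not the kernel of $I+V_{x}(-\Delta)^{-s/2}$; relating the two, proving norm convergence $V_{x}R_{0}^{s}(\tau)\to V_{x}(-\Delta)^{-s/2}$ in a weighted space, and then transferring the resulting weighted Fredholm bound into the \emph{unweighted} $L^{2}\to L^{2}$ bound $C(\tau)$ (note $(-\Delta)^{-s/2}$ is not bounded on $L^{2}$) is substantial work that the sketch does not contain. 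The reverse direction has the same defect: your bound $\|I+V_{x}R_{0}^{s}(\tau)\|_{L^{2}\to L^{2}}\leqslant 1+\|V_{x}\|_{H^{s}\to L^{2}}\|R_{0}^{s}(\tau)\|_{L^{2}\to H^{s}}$ degenerates like $\tau^{-1}$ as $\tau\to0^{+}$ by Lemma \ref{lemma2.10}, so the limit gives nothing there, while at a fixed $\tau$ you only recover the inhomogeneous estimate $\|(-\Delta_{A})^{\frac{s}{2}}u\|_{L^{2}}\leqslant C(\|(-\Delta)^{\frac{s}{2}}u\|_{L^{2}}+\|u\|_{L^{2}})$, not \eqref{2.65}.

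The paper closes exactly this gap without any threshold limit: it keeps one fixed $\tau\in(0,1)$ and estimates the remainder not by $\tau\|u\|_{L^{2}}$ but by a small multiple of the homogeneous norm itself. Writing $\tau(F^{A}u-\mathscr{F}u)=-\tau\,\mathscr{F}(I+V_{x}R_{0}^{s}(\tau))^{-1}V_{x}R_{0}^{s}(\tau)u$, it proves the weighted bound $\|\langle x\rangle^{\sigma}V_{x}R_{0}^{s}(\tau)u\|_{L^{2}}\leqslant C\tau^{-1+\frac{n}{s}}\|u\|_{\dot H^{s}}$ by scaling the kernel of $R_{0}^{s}(\tau)$ and Sobolev embedding --- this is precisely where $0<s<\frac{n}{2}$ and $\sigma>s$ enter --- and since $-1+\frac{n}{s}>1$ the remainder is $O(\tau^{\frac{n}{s}})\|u\|_{\dot H^{s}}$ and can be absorbed into the left-hand side once $\tau$ is chosen small; the converse inequality then follows from the same identity using only the $L^{2}$-boundedness of $(I+V_{x}R_{0}^{s}(\tau))^{-1}$ at that fixed $\tau$. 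To salvage your route you would have to actually prove the uniform threshold bound on $C(\tau)$ (essentially redoing a zero-energy resolvent analysis for the fractional operator), or else replace the $\tau\|u\|_{L^{2}}$ bookkeeping by the quantitative $\dot H^{s}$ estimate above, which is the mechanism of the paper's proof of Lemma \ref{lemma2.13}.
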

\begin{proof}
	On the one hand, applying the quasi-intertwining property of the distorted Fourier transforms \eqref{2.63} in Lemma \ref{lemma2.12}, we have
	\begin{flalign}
		&\|\arrowvert\xi\arrowvert^s\mathscr{F}u\|^{2}_{L^{2}(\mathbb{R}^{n})}\nonumber\\
		=&\|F^{A}\mathscr{H}u+\tilde{\tau}(F^{A}u-\mathscr{F}u)\|^{2}_{L^{2}(\mathbb{R}^{n})}\nonumber\\
		\leqslant&\|F^{A}\mathscr{H}u\|^{2}_{L^{2}(\mathbb{R}^{n})}+\tilde{\tau}^{2}\|((I+V_{x}R^{s}_{0}(\tilde{\tau}))^{-1}-I)u\|^{2}_{L^{2}(\mathbb{R}^{n})}\nonumber\\
		=&\|F^{A}\mathscr{H}u\|^{2}_{L^{2}(\mathbb{R}^{n})}+\tilde{\tau}^{2}\|(I+V_{x}R^{s}_{0}(\tilde{\tau}))^{-1}(-V_{x}R^{s}_{0}(\tilde{\tau}))u\|^{2}_{L^{2}(\mathbb{R}^{n})}.\label{2.66}
	\end{flalign}
	
	For the second term on the right of \eqref{2.66}, we estimate that
	\begin{flalign}
		&\tilde{\tau}^{2}\|(I+V_{x}R^{s}_{0}(\tilde{\tau}))^{-1}(-V_{x}R^{s}_{0}(\tau))u\|^{2}_{L^{2}(\mathbb{R}^{n})}\nonumber\\
		\leqslant&\tilde{\tau}^{2}\|(I+V_{x}R^{s}_{0}(\tilde{\tau}))^{-1}\langle x\rangle^{-\sigma}\|_{L^{2}\rightarrow L^{2}}\cdot\|\langle x\rangle^{\sigma}V_{x}R^{s}_{0}(\tilde{\tau})u\|^{2}_{L^{2}(\mathbb{R}^{n})}\nonumber\\
		\leqslant&C\tilde{\tau}^{2}\|\langle x\rangle^{\sigma}V_{x}R^{s}_{0}(\tilde{\tau})u\|^{2}_{L^{2}(\mathbb{R}^{n})}\nonumber\\
		\leqslant&C\tilde{\tau}^{2}\|\langle x\rangle^{-\sigma}R^{s}_{0}(\tilde{\tau})u\|_{H^{s}(\mathbb{R}^{n})}^{2}\nonumber\\
		\leqslant&C\tilde{\tau}^{2}(\|\langle x\rangle^{-\sigma}R^{s}_{0}(\tilde{\tau})u\|_{L^{2}(\mathbb{R}^{n})}^{2}+\|\langle x\rangle^{-\sigma}(-\Delta)^{\frac{s}{2}}R^{s}_{0}(\tilde{\tau})u\|_{L^{2}(\mathbb{R}^{n})}^{2}).\label{2.67}
	\end{flalign}
	By scaling for the kernel of $R_{0}^{s}(\tilde{\tau})$, we have
	\begin{flalign*}
		R_{0}^{s}(\tilde{\tau},\arrowvert x-y\arrowvert)=&\int_{\mathbb{R}^{n}}\frac{e^{i(x-y)\cdot\xi}}{\arrowvert\xi\arrowvert^{s}+\tilde{\tau}}d\xi\nonumber\\
		=&\int_{\mathbb{R}^{n}}\frac{\tilde{\tau}^{-1}\cdot e^{i\tilde{\tau}^{\frac{1}{s}}(x-y)\cdot\tilde{\tau}^{-\frac{1}{s}}\xi}}{\arrowvert\tilde{\tau}^{-\frac{1}{s}}\xi\arrowvert^{s}+1}d(\tilde{\tau}^{-\frac{1}{s}}\xi\cdot\tilde{\tau}^{\frac{1}{s}})\nonumber\\
		:=&\tilde{\tau}^{-1+\frac{n}{s}}R_{0}^{s}(1,\tilde{\tau}^{-\frac{1}{s}}\arrowvert x-y\arrowvert),
	\end{flalign*}
	Then using Sobolev embedding, we have
	\begin{flalign*}
		&\|\langle x\rangle^{-\sigma}R^{s}_{0}(\tilde{\tau})u\|_{L^{2}(\mathbb{R}^{n})}+\|\langle x\rangle^{-\sigma}(-\Delta)^{\frac{s}{2}}R^{s}_{0}(\tilde{\tau})u\|_{L^{2}(\mathbb{R}^{n})}\nonumber\\
		\leqslant&\|\langle x\rangle^{-\sigma}\|_{L\frac{n}{s}}\cdot\|R^{s}_{0}(\tilde{\tau})u\|_{L^{\frac{2n}{n-2s}}(\mathbb{R}^{n})}\nonumber\\
		&+\|\langle x\rangle^{-\sigma}\|_{L^{\infty}}\cdot\|(-\Delta)^{\frac{s}{2}}R^{s}_{0}(\tilde{\tau})u\|_{L^{2}(\mathbb{R}^{n})}\nonumber\\
		\leqslant&C\tilde{\tau}^{-1+\frac{n}{s}}\|u\|_{\dot{H}^{s}},
	\end{flalign*}
	where $0<s<\frac{n}{2}$ and $\sigma>s$. Substitute this estimate into \eqref{2.67}, we have
	\begin{flalign*}
		&\tilde{\tau}^{2}\|(I+V_{x}R^{s}_{0}(\tilde{\tau}))^{-1}(-V_{x}R^{s}_{0}(\tau))u\|^{2}_{L^{2}(\mathbb{R}^{n})}\nonumber\\
		\leqslant&C\tilde{\tau}^{2}(\tilde{\tau}^{-1+\frac{n}{s}}\|u\|_{\dot{H}^{s}})^{2}\\
		=&C\tilde{\tau}^{\frac{2n}{s}}\|u\|_{\dot{H}^{s}(\mathbb{R}^{n})}^{2}.
	\end{flalign*}
	And then substitute it into \eqref{2.66}, we obtain that
	\begin{flalign*}
		\|u\|_{\dot{H}^{s}}^{2}=\|\arrowvert\xi\arrowvert^s\mathscr{F}u\|^{2}_{L^{2}(\mathbb{R}^{n})}
		\leqslant\|F^{A}\mathscr{H}u\|^{2}_{L^{2}(\mathbb{R}^{n})}+\tilde{\tau}^{\frac{2n}{s}}\|u\|_{\dot{H}^{s}(\mathbb{R}^{n})}^{2}.
	\end{flalign*}
	Choosing $\tilde{\tau}\in(0,1)$, we have
	\begin{flalign*}
		\|u\|_{\dot{H}^{s}}^{2}\leqslant C\|F^{A}\mathscr{H}u\|^{2}_{L^{2}(\mathbb{R}^{n})},
	\end{flalign*}
	i. e.
	\begin{flalign}
		\|(-\Delta)^{\frac{s}{2}}u\|_{L^{2}(\mathbb{R}^{n})}\leqslant C\|(-\Delta_{A})^{\frac{s}{2}}u\|_{L^{2}(\mathbb{R}^{n})}. \label{2.68}
	\end{flalign}
	
	On the other hand, applying the quasi-intertwining property of the distorted Fourier transforms again in \eqref{2.63}, we have
	\begin{flalign}
		&\|F^{A}\mathscr{H}u\|_{L^{2}(\mathbb{R}^{n})}\nonumber\\
		\leqslant&\|\arrowvert\xi\arrowvert^s\mathscr{F}u\|_{L^{2}(\mathbb{R}^{n})}+\tilde{\tau}\|\mathscr{F}u-F^{A}u\|_{L^{2}(\mathbb{R}^{n})}\nonumber\\
		\leqslant&C\|\arrowvert\xi\arrowvert^s\mathscr{F}u\|_{L^{2}(\mathbb{R}^{n})}+\tilde{\tau}\|(I+V_{x}R^{s}_{0}(\tilde{\tau}))^{-1}(V_{x}R^{s}_{0}(\tilde{\tau}))u\|_{L^{2}(\mathbb{R}^{n})}\nonumber\\
		\leqslant&C\|\arrowvert\xi\arrowvert^s\mathscr{F}u\|_{L^{2}(\mathbb{R}^{n})}+C\tilde{\tau}^{-1+\frac{n}{s}}\|u\|_{\dot{H}^{s}}\nonumber\\
		\leqslant&C\|\arrowvert\xi\arrowvert^s\mathscr{F}u\|_{L^{2}(\mathbb{R}^{n})}\nonumber\\
		=&C\|u\|_{\dot{H}^{s}(\mathbb{R}^{n})}.\label{2.69}
	\end{flalign}
	Since
	\begin{flalign*}
		\|u\|_{\dot{H}_{A}^{s}(\mathbb{R}^{n})}:=&\|(-\Delta_{A})^{\frac{s}{2}}u\|_{L^{2}(\mathbb{R}^{n})}\nonumber\\
		=&\|\mathscr{H}u\|_{L^{2}(\mathbb{R}^{n})}\nonumber\\
		\leqslant&C\|(I+V_{x}R^{s}_{0}(\tilde{\tau}))^{-1}\mathscr{H}u\|_{L^{2}(\mathbb{R}^{n})}\nonumber\\
		=&C\|F^{A}\mathscr{H}u\|_{L^{2}(\mathbb{R}^{n})},
	\end{flalign*}
	we substitute it into \eqref{2.69}, then obtain that
	\begin{flalign}
		\|(-\Delta_{A})^{\frac{s}{2}}u\|_{L^{2}(\mathbb{R}^{n})}\leqslant C\|(-\Delta)^{\frac{s}{2}}u\|_{L^{2}(\mathbb{R}^{n})}. \label{2.70}
	\end{flalign}
	
	Combining \eqref{2.68} and \eqref{2.70}, we have the equivalence of operators
	$$\|(-\Delta)^{\frac{s}{2}}u\|_{L^{2}(\mathbb{R}^{n})}\sim \|(-\Delta_{A})^{\frac{s}{2}}u\|_{L^{2}(\mathbb{R}^{n})}.$$
	Therefore, \eqref{2.65} follows. This completes the proof of Lemma \ref{lemma2.13}.
\end{proof}

In order to estimate the solution of the equation, we also need the properties of the distorted Fourier transforms at the spectral points and the following lemma.
\begin{lemma}\label{lemma2.14}
	Let $R_{0}^{s_{1}}(z)=((-\Delta)^{\frac{s_{1}}{2}}-z)^{-1}$, $V_{x}=(-\Delta_{A})^{\frac{s_{1}}{2}}-(-\Delta)^{\frac{s_{1}}{2}}$, and $A(x)$ satisfy hypothesis \eqref{1.2}, then for $0<s_{1}<1$, we have
	\begin{flalign}\label{2.71}	
		\|R_{0}^{s_{1}}(z)V_{x}u\|_{L^{\infty}}\leqslant C\|u\|_{L^{\infty}},
	\end{flalign}
	where $z\in\mathbb{C}^{\pm}\backslash\{0\}$.
\end{lemma}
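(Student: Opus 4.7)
\medskip

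The plan is to combine the integral representation of $V_{x}$ from Lemma \ref{lemma2.8} with kernel/convolution estimates for the composition of the fractional and usual resolvents. First, apply Lemma \ref{lemma2.8} with $s$ replaced by $s_{1}$ to write
\begin{equation*}
R_{0}^{s_{1}}(z)V_{x}u = c(s_{1})\int_{0}^{\infty}\tau^{\frac{s_{1}}{2}}R_{0}^{s_{1}}(z)(\tau-\Delta)^{-1}V_{3}(x)(\tau-\Delta_{A})^{-1}u\,d\tau,
\end{equation*}
where $V_{3}(x)=-iA\cdot\nabla-i\nabla\cdot A+\arrowvert A\arrowvert^{2}$. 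Since $R_{0}^{s_{1}}(z)$ and $(\tau-\Delta)^{-1}$ are both Fourier multipliers in $-\Delta$, they commute, and their product is the multiplier with symbol
\begin{equation*}
m_{z,\tau}(\xi)=\frac{1}{(\arrowvert\xi\arrowvert^{s_{1}}-z)(\arrowvert\xi\arrowvert^{2}+\tau)},
\end{equation*}
which is smooth in $\xi$ for $z\in\mathbb{C}^{\pm}\setminus\{0\}$ and $\tau>0$, and decays like $\arrowvert\xi\arrowvert^{-s_{1}-2}$ at infinity.

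The strategy is to split $V_{3}=V_{3}^{(0)}+V_{3}^{(1)}$ with $V_{3}^{(0)}=-i\nabla\cdot A+\arrowvert A\arrowvert^{2}$ of order zero and $V_{3}^{(1)}=-iA\cdot\nabla$ of first order, and split the $\tau$-integral into $(0,1]\cup[1,\infty)$. For the zero-th order piece, bound $\|R_{0}^{s_{1}}(z)(\tau-\Delta)^{-1}V_{3}^{(0)}(\tau-\Delta_{A})^{-1}u\|_{L^{\infty}}$ by pairing the $L^{1}$-norm of the convolution kernel of $R_{0}^{s_{1}}(z)(\tau-\Delta)^{-1}$ against the pointwise decay $\arrowvert V_{3}^{(0)}\arrowvert\leqslant C\langle x\rangle^{-\beta}$ and the $L^{\infty}\to L^{\infty}$ bound $\|(\tau-\Delta_{A})^{-1}\|\lesssim\tau^{-1}$ (derived from the non-magnetic bound $\|(\tau-\Delta)^{-1}\|_{L^{\infty}\to L^{\infty}}\lesssim\tau^{-1}$ via the resolvent identity as in the proof of \eqref{2.45}--\eqref{2.46}). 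For the first-order piece, commute and absorb the gradient into the magnetic resolvent, using $\|\nabla(\tau-\Delta_{A})^{-1}u\|_{L^{\infty}}\lesssim\tau^{-\frac{1}{2}}\|u\|_{L^{\infty}}$, again by Bessel function asymptotics on the kernel of $\nabla(\tau-\Delta)^{-1}$ combined with the resolvent identity and the decay of $A$.

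With these bounds in place, on $[1,\infty)$ the integrand decays like $\tau^{\frac{s_{1}}{2}-2}$, which is integrable since $s_{1}<1$. On $(0,1]$, the rescaling $\xi=\tau^{\frac{1}{2}}\eta$ in the symbol $m_{z,\tau}$ trades the $\tau$-dependence for a dependence on $z\tau^{-s_{1}/2}$, and combined with the $\tau^{\frac{s_{1}}{2}}$ weight and the spatial decay of $V_{3}$, integrability near $\tau=0$ follows from the hypothesis $\beta>n+2$ via the same weight-exchange mechanism used in the proof of Lemma \ref{lemma2.6}.

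The main obstacle is obtaining the uniform $L^{1}$-kernel estimate for the multiplier $m_{z,\tau}$ with explicit control on the dependence on $z$ and $\tau$; here the extra smoothing factor $(\arrowvert\xi\arrowvert^{2}+\tau)^{-1}$ is essential to compensate for the limited smoothness of $(\arrowvert\xi\arrowvert^{s_{1}}-z)^{-1}$, especially as $z$ approaches the positive real axis where the latter develops a singular sphere of Sommerfeld type. The restriction $0<s_{1}<1$ is used both to ensure the local integrability of $\arrowvert\xi\arrowvert^{-s_{1}}$ near the origin and to make the tail integral in $\tau$ converge.
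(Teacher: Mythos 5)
There is a genuine gap, and it sits exactly at the point you yourself flag as ``the main obstacle.'' The whole content of Lemma \ref{lemma2.14} is a bound on $R_{0}^{s_{1}}(z)$ acting on $L^{\infty}$ that is uniform in $z\in\mathbb{C}^{\pm}\setminus\{0\}$, in particular as $z$ approaches the positive real axis (this uniformity is what is later used with $z=\lambda\pm i0$ in Lemma \ref{lemma2.15}). Your proposal reduces everything to ``a uniform $L^{1}$-kernel estimate for the multiplier $m_{z,\tau}(\xi)=\bigl((\arrowvert\xi\arrowvert^{s_{1}}-z)(\arrowvert\xi\arrowvert^{2}+\tau)\bigr)^{-1}$ with explicit control on $z$ and $\tau$,'' but you never prove it: you only assert that the extra factor $(\arrowvert\xi\arrowvert^{2}+\tau)^{-1}$ ``is essential to compensate'' for the Sommerfeld-type singular sphere of $(\arrowvert\xi\arrowvert^{s_{1}}-z)^{-1}$. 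That is precisely the hard analytic step. The paper does it differently and concretely: writing $z=z_{0}^{s_{1}/2}$, it invokes the explicit representation \eqref{2.73} of $((-\Delta)^{\frac{s_{1}}{2}}-z_{0}^{\frac{s_{1}}{2}})^{-1}$ as $K_{1}+K_{2}$, where $K_{1}$ is a multiple of $(-\Delta-z_{0})^{-1}$ (estimated via the Bessel/Hankel kernel \eqref{2.72}) and $K_{2}$ is a $\tau$-integral of $(\tau-\Delta)^{-1}$ against an explicit rational weight; the uniformity in $z$ comes from the lower bounds $\arrowvert\tau^{\frac{s_{1}}{2}}-z_{0}^{\frac{s_{1}}{2}}e^{\pm i\pi\frac{s_{1}}{2}}\arrowvert\geqslant C_{s_{1}}$ and $\arrowvert t^{s_{1}}-\arrowvert x-y\arrowvert^{s_{1}}z_{0}^{\frac{s_{1}}{2}}e^{\pm i\pi\frac{s_{1}}{2}}\arrowvert\geqslant C_{s_{1}}\arrowvert x-y\arrowvert^{s_{1}}$, which exploit the phase separation between $\tau>0$ and $z_{0}^{s_{1}/2}e^{\pm i\pi s_{1}/2}$, and from the heat-kernel computation \eqref{2.75}--\eqref{2.76}. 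This yields the pointwise kernel bound \eqref{2.80} ($\arrowvert x-y\arrowvert^{-(n-2)}$ near the diagonal, $\arrowvert x-y\arrowvert^{-(n+s_{1})}$ at infinity), hence an integrable convolution kernel and the $L^{\infty}\rightarrow L^{\infty}$ bound for $R_{0}^{s_{1}}(z)$; only after that does the paper estimate $\|V_{x}u\|_{L^{\infty}}$ through the integral representation, which is the part your proposal shares. Without an argument of this type (or an equivalent multiplier computation with explicit, quantified $z$-uniformity), your proof does not close.

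Two secondary points would also need repair even granting the multiplier bound. First, the estimates you import, namely $\|(\tau-\Delta_{A})^{-1}\|_{L^{\infty}\rightarrow L^{\infty}}\lesssim\tau^{-1}$, $\|\nabla(\tau-\Delta_{A})^{-1}\|_{L^{\infty}\rightarrow L^{\infty}}\lesssim\tau^{-\frac{1}{2}}$, and the ``weight-exchange'' bounds, are proved in the paper only in $L^{r}$ with $1\leqslant r\leqslant2$ (Lemma \ref{lemma2.5}, \eqref{2.45}--\eqref{2.46}) or with $\dot{H}^{\alpha}$ norms on the right (Lemma \ref{lemma2.6}); the $L^{\infty}$ analogues are plausible via the Bessel-kernel bounds and the decay of $A$, but they are not consequences of the cited displays and must be established, including uniform invertibility of $(I+V_{3}R_{0})^{-1}$ on the relevant $L^{\infty}$-type spaces for small $\tau$, which the paper only addresses in weighted $L^{2}$ (see \eqref{2.35}). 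Second, your small-$\tau$ bookkeeping is too loose: near $\tau=0$ you need the composed operator norm to be $O(\tau^{-1+\delta})$ with $\delta>1-\tfrac{s_{1}}{2}$ excluded, i.e.\ strictly better than $\tau^{-1-\frac{s_{1}}{2}}$, and invoking Lemma \ref{lemma2.6} ``by analogy'' does not supply that in the $L^{\infty}$ setting; this needs an actual weighted $L^{\infty}$ resolvent estimate of the type $\|(\tau-\Delta)^{-1}\langle x\rangle^{-N}\|_{L^{\infty}\rightarrow L^{\infty}}\leqslant C$ uniformly for $\tau\in(0,1)$ (true for $N>2$, $n\geqslant3$, but to be proved, and with a separate discussion for $n=2$ where the kernel is logarithmic).
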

\begin{proof}
	Denote by $K(x,y)$ the Schwartz kernel of the resolvent $R_{0}^{s_{1}}(z)$. Now we estimate $K$. Let's recall the following expression of the Green function of $(-\Delta-z)^{-1}$ (see \cite{KRS})
	\begin{flalign}\label{2.72}
		(-\Delta-z)^{-1}(x,y)=(\frac{-z}{\arrowvert x-y\arrowvert^{2}})^{\frac{n-2}{4}}K_{\frac{n-2}{2}}((-z)^{\frac{1}{2}}\arrowvert x-y\arrowvert),
	\end{flalign}
	where
	\begin{equation}
		\arrowvert K_{\frac{n-2}{2}}(z)\arrowvert\leqslant
		\left\{
		\begin{split}
			&C\arrowvert z\arrowvert^{-(\frac{n}{2}-1)},\ \arrowvert z\arrowvert\leqslant1;\nonumber\\
			&C\arrowvert z\arrowvert^{-\frac{1}{2}},\quad\quad \ \arrowvert z\arrowvert>1.
		\end{split}
		\right.
	\end{equation}
	Then
	\begin{equation}
		\arrowvert K(x,y)\arrowvert\leqslant
		\left\{
		\begin{split}
			&C\arrowvert x-y\arrowvert^{-(n-2)},\, \, \  \arrowvert x-y\arrowvert\leqslant1;\nonumber\\
			&C\arrowvert x-y\arrowvert^{-\frac{1}{2}},\quad\quad \ \arrowvert x-y\arrowvert>1.
		\end{split}
		\right.
	\end{equation}
	
	Write $z=z_{0}^{\frac{s_{1}}{2}}$, where $0<\arrowvert z_{0}\arrowvert<\pi$. We recall the following expression of the resolvent of the fractional power of the negative Laplacian (see \cite{MS1})
	\begin{flalign}\label{2.73}
		((-\Delta)^{\frac{s_{1}}{2}}-z_{0}^{\frac{s_{1}}{2}})^{-1}=&\frac{2z_{0}^{1-\frac{s_{1}}{2}}}{s_{1}}(-\Delta-z_{0})^{-1}\nonumber\\
		+&\frac{\sin\frac{s_{1}}{2}\pi}{\pi}\int_{0}^{\infty}\frac{\tau^{\frac{s_{1}}{2}}(\tau-\Delta)^{-1}}{\tau^{s_{1}}-2\tau^{\frac{s_{1}}{2}}z_{0}^{\frac{s_{1}}{2}}\cos\frac{s_{1}}{2}\pi+z_{0}^{s_{1}}}d\tau\nonumber\\
		=:&K_{1}+K_{2}.
	\end{flalign}
	First, it's easy to see that $K_{1}$ is the \textquotedblleft good term\textquotedblright\ for estimating. Indeed, from \eqref{2.73}, we have
	\begin{equation}
		\arrowvert K_{1}(x,y)\arrowvert\leqslant
		\left\{
		\begin{split}
			&C\arrowvert x-y\arrowvert^{-(n-2)},\, \, \  \arrowvert x-y\arrowvert\leqslant1;\\
			&C\arrowvert x-y\arrowvert^{-\frac{1}{2}},\quad\quad \ \arrowvert x-y\arrowvert>1.\label{2.74}
		\end{split}
		\right.
	\end{equation}
	
	Next, for $K_{2}$,  when $\arrowvert x-y\arrowvert\leqslant1$, we recall that
	\begin{flalign}\label{2.75}
		(\tau-\Delta)^{-1}(x,y)=C\int_{0}^{\infty}e^{-\tau\delta-\frac{\arrowvert x-y\arrowvert^{2}}{4\pi\delta}}\delta^{-\frac{-n+2}{2}}\frac{d\delta}{\delta}.
	\end{flalign}
	And a direct computation yields
	\begin{flalign}
		&\frac{\sin\frac{s_{1}}{2}\pi}{\pi}\int_{0}^{\infty}\frac{\tau^{\frac{s_{1}}{2}}(\tau-\Delta)^{-1}}{\tau^{s_{1}}-2\tau^{\frac{s_{1}}{2}}z_{0}^{\frac{s_{1}}{2}}\cos\frac{s_{1}}{2}\pi+z_{0}^{s_{1}}}d\tau\nonumber\\
		=&\int_{0}^{\infty}e^{-\tau\delta-\frac{\arrowvert x-y\arrowvert^{2}}{4\pi\delta}}\delta^{-\frac{-n+2}{2}}\frac{d\delta}{\delta}\frac{\sin\frac{s_{1}}{2}\pi}{\pi}\int_{0}^{\infty}\frac{\tau^{\frac{s_{1}}{2}}e^{-\tau\delta}}{\tau^{s_{1}}-2\tau^{\frac{s_{1}}{2}}z_{0}^{\frac{s_{1}}{2}}\cos\frac{s_{1}}{2}\pi+z_{0}^{s_{1}}}d\tau\nonumber\\
		=&\int_{0}^{\infty}e^{-\tau\delta-\frac{\arrowvert x-y\arrowvert^{2}}{4\pi\delta}}\delta^{-\frac{-n+2}{2}}f_{s_{1}}(\delta)\frac{d\delta}{\delta}\nonumber\\
		=&C\arrowvert x-y\arrowvert^{-(n-s_{1})}+o(\arrowvert x-y\arrowvert^{-(n-s_{1})}),\label{2.76}
	\end{flalign}
	where $f_{s_{1}}(\delta)=\int_{0}^{\infty}\frac{\tau^{\frac{s_{1}}{2}}e^{-\tau}}{\tau^{s_{1}}-2\tau^{\frac{s_{1}}{2}}\delta^{\frac{s_{1}}{2}}\cos\frac{s_{1}}{2}\pi+(\delta z_{0})^{s_{1}}}d\tau$, and $\mathop{\lim}\limits_{\delta\rightarrow0}f_{s_{1}}(\delta)=\Gamma(1-\frac{s_{1}}{2})$. The last equality follows
	\begin{flalign*}
		\arrowvert x-y\arrowvert^{-(n-s_{1})}=\frac{(4\pi)^{\frac{-(n-s_{1})}{2}}}{\Gamma(\frac{n-s_{1}}{2})}\int_{0}^{\infty}e^{-\tau\delta-\frac{\arrowvert x-y\arrowvert^{2}}{4\pi\delta}}\delta^{-\frac{-n+2}{2}}\frac{d\delta}{\delta}.
	\end{flalign*}
	Thus we have
	\begin{flalign}\label{2.77}
		\arrowvert K_{2}(x,y)\arrowvert\leqslant C\arrowvert x-y\arrowvert^{-(n-s_{1})},\  \arrowvert x-y\arrowvert\leqslant1.
	\end{flalign}
	
	When $\arrowvert x-y\arrowvert>1$, recall that
	\begin{equation}
		\arrowvert(\tau-\Delta)^{-1}(x,y)\arrowvert\leqslant
		\left\{
		\begin{split}
			&C\arrowvert x-y\arrowvert^{-(n-2)},\quad  \sqrt{\tau}\arrowvert x-y\arrowvert\leqslant1;\nonumber\\
			&C\tau^{\frac{n-2}{4}}\arrowvert x-y\arrowvert^{-\frac{n-2}{2}}e^{-\sqrt{\tau}\arrowvert x-y\arrowvert},\ \sqrt{\tau}\arrowvert x-y\arrowvert>1,
		\end{split}
		\right.
	\end{equation}
	we obtain that
	\begin{flalign}
		&\int_{0}^{\infty}\arrowvert\frac{\tau^{\frac{s_{1}}{2}}(\tau-\Delta)^{-1}(x,y)}{\tau^{\frac{s_{1}}{2}}-z_{0}^{\frac{s_{1}}{2}}e^{\pm i\pi\frac{s_{1}}{2}}}\arrowvert d\tau\nonumber\\
		\leqslant&C\arrowvert x-y\arrowvert^{-(n-2)}\int_{0}^{\arrowvert x-y\arrowvert^{-2}}\arrowvert\frac{\tau^{\frac{s_{1}}{2}}}{\tau^{\frac{s_{1}}{2}}-z_{0}^{\frac{s_{1}}{2}}e^{\pm i\pi\frac{s_{1}}{2}}}\arrowvert d\tau\nonumber\\
		&+C\arrowvert x-y\arrowvert^{-n}\int_{1}^{\infty}\frac{t^{s_{1}+\frac{n}{2}}e^{-t}}{\arrowvert t^{s_{1}}-\arrowvert x-y\arrowvert^{s_{1}}z_{0}^{\frac{s_{1}}{2}}e^{\pm i\pi\frac{s_{1}}{2}}\arrowvert}dt\nonumber\\
		\leqslant&C\arrowvert x-y\arrowvert^{-(n+s_{1})},\label{2.78}
	\end{flalign}
	where the last inequality follows from the fact that when $\arrowvert \arg z_{0}^{\frac{s_{1}}{2}}\arrowvert\leqslant\frac{s_{1}\pi}{4}$, then
	\begin{flalign*}
		\arrowvert\tau^{\frac{s_{1}}{2}}-z_{0}^{\frac{s_{1}}{2}}e^{\pm i\pi\frac{s_{1}}{2}}\arrowvert\geqslant C_{s_{1}}, \ if\ 0<\tau<1,
	\end{flalign*}
	and
	\begin{flalign*}
		\arrowvert t^{s_{1}}-\arrowvert x-y\arrowvert^{s_{1}}z_{0}^{\frac{s_{1}}{2}}e^{\pm i\pi\frac{s_{1}}{2}}\arrowvert\geqslant C_{s_{1}}\arrowvert x-y\arrowvert^{s_{1}}, \ if\ \arrowvert x-y\arrowvert>1,\ t>0.
	\end{flalign*}
	Hence, we have
	\begin{flalign}\label{2.79}
		\arrowvert K_{2}(x,y)\arrowvert\leqslant C\arrowvert x-y\arrowvert^{-(n+s_{1})},\  \arrowvert x-y\arrowvert>1.
	\end{flalign}
	
	Combining estimates of $K_{1}$ and $K_{2}$, we obtain that
	\begin{equation}
		\arrowvert K(x,y)\arrowvert\leqslant
		\left\{
		\begin{split}
			&C\arrowvert x-y\arrowvert^{-(n-2)},\, \, \, \  \arrowvert x-y\arrowvert\leqslant1;\\
			&C\arrowvert x-y\arrowvert^{-(n+s_{1})},\, \, \  \arrowvert x-y\arrowvert>1.\label{2.80}
		\end{split}
		\right.
	\end{equation}
	Then, using the same method to estimate $V_{x}$ in Lemma \ref{lemma4.2}, we obtain that
	\begin{flalign}
		&\|R_{0}^{s_{1}}(z)V_{x}u\|_{L^{\infty}}\nonumber\\
		=&\|\int_{\mathbb{R}^{n}}K(x,y)V_{y}u(y)dy\|_{L^{\infty}}\nonumber\\
		\leqslant&C\|\langle x\rangle^{-\sigma}*V_{x}u\|_{L^{\infty}}\nonumber\\
		\leqslant&C\|V_{x}u\|_{L^{\infty}}\nonumber\\
		\leqslant&C\int_{0}^{\infty}\tau^{\frac{s_{1}}{2}}\|(\tau-\Delta)^{-1}V_{3}(x)(\tau-\Delta_{A})^{-1}u\|_{L^{\infty}}d\tau\nonumber\\
		\leqslant&C\|u\|_{L^{\infty}},\label{2.81}
	\end{flalign}
	where $0<s_{1}<1$. Therefore, the proof of Lemma \ref{lemma2.14} is completed.
\end{proof}

Now, applying the properties of the distorted Fourier transforms at the spectral points and the estimate of Lemma \ref{lemma2.14}, we obtain the estimate of solutions of Schr\"{o}dinger magnetic equation.

\begin{lemma}\label{lemma2.15}
	For any $s>\frac{n}{2}$, and $u\in C_{0}^{\infty}(\mathbb{R}^{n})$, there exists a fixed $C>0$ such that
	\begin{equation}\label{2.82}
		\|u\|_{L^{\infty}_{x}(\mathbb{R}^{n})}\leqslant C(\frac{n}{2s-n})^{\frac{n}{4s}}\sqrt{\frac{\pi^{\frac{n}{2}}}{\Gamma(\frac{n}{2}+1)}}\|u\|_{L^{2}_{x}(\mathbb{R}^{n})}^{1-\frac{n}{2s}}\cdot\|(-\Delta_{A})^{\frac{s}{2}}u\|_{L^{2}_{x}(\mathbb{R}^{n})}^{\frac{n}{2s}}.
	\end{equation}
\end{lemma}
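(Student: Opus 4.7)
The plan is to adapt the standard Gagliardo--Nirenberg--Sobolev embedding $\dot H^s \cap L^2 \hookrightarrow L^\infty$ (valid for $s>n/2$) to the magnetic setting by replacing the usual Fourier transform with the distorted Fourier transforms $F^A_\pm$ at spectral points introduced in Remark \ref{remark2.2}. The key structural inputs are the intertwining identity $F^A_\pm \mathscr{H}u=|\xi|^s F^A_\pm u$ from \eqref{4.36} and the pointwise resolvent bound $\|R_0^{s_1}(z)V_x u\|_{L^\infty}\leqslant C\|u\|_{L^\infty}$ of Lemma \ref{lemma2.14}.

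First I would establish an inversion/reconstruction formula of the form
\[
u(x)=\int_{\mathbb{R}^{n}} e_{\pm}(x,\xi)\,(F^A_\pm u)(\xi)\,d\xi,
\]
where $e_\pm(x,\xi)$ is a generalized eigenfunction of $\mathscr H$. The definition of $F^A_\pm$ gives the representation $e_\pm(x,\xi)=(2\pi)^{-n/2}e^{ix\cdot\xi}+(2\pi)^{-n/2}[R_0^{s}(|\xi|^s\pm i0)V_x]^{*}(e^{ix\cdot\xi})$ (i.e.\ the plane wave plus a scattering correction). Applying Lemma \ref{lemma2.14} to this correction term yields the crucial uniform bound
\[
|e_\pm(x,\xi)|\leqslant C(2\pi)^{-n/2}\qquad\text{for all }x,\xi\in\mathbb R^n,
\]
so that $\|u\|_{L^\infty}\leqslant C(2\pi)^{-n/2}\|F^A_\pm u\|_{L^1}$.

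Next I would split the frequency integral at a radius $R>0$:
\[
\|F^A_\pm u\|_{L^1}\leqslant \int_{|\xi|\leqslant R}|F^A_\pm u|\,d\xi+\int_{|\xi|>R}|\xi|^{-s}\cdot|\xi|^{s}|F^A_\pm u|\,d\xi,
\]
and apply Cauchy--Schwarz to each piece. Since $F^A_\pm$ is an $L^2$-isometry on the absolutely continuous subspace (which here is all of $L^2$, because zero is neither an eigenvalue nor a resonance of $-\Delta_A$ and $\sigma_{sc}(\Delta_A)=\emptyset$), Plancherel gives $\|F^A_\pm u\|_{L^2}=\|u\|_{L^2}$, so the low-frequency piece is bounded by $|B_R|^{1/2}\|u\|_{L^2}=\sqrt{\pi^{n/2}/\Gamma(n/2+1)}\,R^{n/2}\|u\|_{L^2}$. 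For the high-frequency piece, the condition $s>n/2$ makes $\int_{|\xi|>R}|\xi|^{-2s}d\xi=\tfrac{n\pi^{n/2}}{(2s-n)\Gamma(n/2+1)}R^{n-2s}$ finite, and the intertwining $|\xi|^s F^A_\pm u=F^A_\pm\mathscr Hu$ together with Plancherel gives $\||\xi|^sF^A_\pm u\|_{L^2}=\|(-\Delta_A)^{s/2}u\|_{L^2}$. Hence
\[
\|u\|_{L^\infty}\leqslant C\sqrt{\tfrac{\pi^{n/2}}{\Gamma(n/2+1)}}\Big[\|u\|_{L^2}R^{n/2}+\sqrt{\tfrac{n}{2s-n}}\,\|(-\Delta_A)^{s/2}u\|_{L^2}R^{(n-2s)/2}\Big].
\]

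Finally I would optimize over $R>0$ by balancing the two terms; the optimal choice is $R^s\sim \sqrt{n/(2s-n)}\,\|(-\Delta_A)^{s/2}u\|_{L^2}/\|u\|_{L^2}$, and straightforward substitution produces the displayed constant $C\,(n/(2s-n))^{n/(4s)}\sqrt{\pi^{n/2}/\Gamma(n/2+1)}$ together with the powers $\|u\|_{L^2}^{1-n/(2s)}$ and $\|(-\Delta_A)^{s/2}u\|_{L^2}^{n/(2s)}$. The main obstacle in this scheme is the uniform pointwise bound on the generalized eigenfunction kernel $e_\pm(x,\xi)$; the estimate in Lemma \ref{lemma2.14} is the essential ingredient, but one also needs to justify that $F^A_\pm$ is a genuine isometry (using the compactness and Fredholm arguments already invoked in Lemma \ref{lemma2.11} together with the absence of eigenvalues and resonances at zero recorded after \eqref{1.3}).
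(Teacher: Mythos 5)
Your proposal follows essentially the same route as the paper: the paper also reduces the problem to bounding $\|F_\pm^{A}u\|_{L^{1}}$ via the $L^{1}\rightarrow L^{\infty}$ boundedness of $(F_\pm^{A})^{*}=(I+R_0^{s_{1}}(\lambda\pm i0)V_{x})^{-1}\mathscr{F}^{-1}$ (i.e.\ Lemma \ref{lemma2.14} plus the isometry $\|E_{c}u\|_{L^{2}}=\|F_\pm^{A}u\|_{L^{2}}$ from H\"ormander), then splits at a radius $l$, applies Cauchy--Schwarz, uses the intertwining \eqref{4.36} (iterated with $s=ks_{1}$) to identify $\||\xi|^{s}F_\pm^{A}u\|_{L^{2}}$ with $\|(-\Delta_{A})^{\frac{s}{2}}u\|_{L^{2}}$, and optimizes in $l$ exactly as you do in $R$. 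The only difference is presentational: you phrase the key $L^{1}\rightarrow L^{\infty}$ step as a uniform pointwise bound on the distorted plane waves, while the paper works directly with the adjoint operator.
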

\begin{proof}
	From definition of the distorted Fourier transform in Remark \ref{remark2.2}, we have
	\begin{flalign}	
		&(F_\pm^{A}u(\xi),u(\xi))\nonumber\\
		=&(\mathscr{F}((I+V_{x}R_0^{s_{1}}(\lambda\pm i0 ))^{-1}u)(\xi),u(\xi))\nonumber\\
		=&(\int_{\mathbb{R}^{n}}e^{-ix\cdot\xi}(I+V_{x}R_0^{s_{1}}(\lambda\pm i0 ))^{-1}u(x)dx,u(\xi))\nonumber\\
		=&((I+V_{x}R_0^{s_{1}}(\lambda\pm i0 ))^{-1}u(x),\int_{\mathbb{R}^{n}}e^{ix\cdot\xi}u(\xi)d\xi)\nonumber\\
		=&(u(x),\int_{\mathbb{R}^{n}}(I+R_0^{s_{1}}(\lambda\pm i0 )V_{x})^{-1}e^{ix\cdot\xi}u(\xi)d\xi),\label{2.83}
	\end{flalign}
	where $(\cdot,\cdot)$ represents inner product. Therefore, we obtain the adjoint operator of $F_\pm^{A}$	
	$$(F_\pm^{A})^{*}=(I+R_0^{s_{1}}(\lambda\pm i0 )V_{x})^{-1}\mathscr{F}^{-1}.$$
	Combining $\mathscr{F}^{-1}: L^{1}\rightarrow L^{\infty}$ and Lemma \ref{lemma2.14} implies that $$(I+R_0^{s_{1}}(\lambda\pm i0 )V_{x})^{-1}: L^{\infty}\rightarrow L^{\infty},\ 0<s_{1}<1,$$  we have
	$$(F_\pm^{A})^{*}: L^{1}\rightarrow L^{\infty}.$$	
	
	Furthermore, from the Theorem 14.6.4 in \cite{H}
	\begin{flalign}\label{2.84}
		\|E_{c}u\|_{L^{2}(\mathbb{R}^{n})}=\|F_\pm^{A}u\|_{L^{2}(\mathbb{R}^{n})},
	\end{flalign}
	we have the left side of \eqref{2.84} is:
	\begin{flalign*}	
		(E_{c}u,E_{c}u)=(u,E_{c}u),
	\end{flalign*}
	and the right side of \eqref{2.84} is:
	\begin{flalign*}	
		(F_\pm^{A}u,F_\pm^{A}u)=(u,(F_\pm^{A})^{*}F_\pm^{A}u).
	\end{flalign*}
	By combining the left and right equalities, we obtain that
	\begin{flalign*}	
		E_{c}=(F_\pm^{A})^{*}F_\pm^{A},
	\end{flalign*}	
	then
	\begin{flalign}\label{2.85}
		E_{c}(F_\pm^{A})^{-1}=(F_\pm^{A})^{*}.
	\end{flalign}
	From the properties of the Fourier transform and H\"older's inequality, it follows that for any $l>0$	
	\begin{flalign}	
		&\|E_{c}u\|_{L_{x}^{\infty}(\mathbb{R}^{n})}\nonumber\\	
		=&\|E_{c}(F_\pm^{A})^{-1}(F_\pm^{A}u)\|_{L_{x}^{\infty}(\mathbb{R}^{n})}\nonumber\\	
		=&\|(F_\pm^{A})^{*}(F_\pm^{A}u)\|_{L_{x}^{\infty}(\mathbb{R}^{n})}\nonumber\\
		\leqslant&C\|F_\pm^{A}u\|_{L_{x}^{1}(\mathbb{R}^{n})}\nonumber\\
		\leqslant&C \|F_\pm^{A}u\|_{L^{1}(\arrowvert\xi\arrowvert\leqslant l)}+C\|F_\pm^{A}u\|_{L^{1}(\arrowvert\xi\arrowvert\geqslant l)}\nonumber\\
		\leqslant&C \|F_\pm^{A}u\|_{L^{2}(\arrowvert\xi\arrowvert\leqslant l)}(\int_{\arrowvert\xi\arrowvert\leqslant l}1d\xi)^{\frac{1}{2}}+C\int_{\arrowvert\xi\arrowvert\geqslant l}(F_\pm^{A}u)\cdot\arrowvert\xi\arrowvert^{s}\cdot\arrowvert\xi\arrowvert^{-s}d\xi\nonumber\\
		\leqslant&C \|F_\pm^{A}u\|_{L^{2}(\arrowvert\xi\arrowvert\leqslant l)}\cdot l^{\frac{n}{2}}\sqrt{\frac{\pi^{\frac{n}{2}}}{\Gamma(\frac{n}{2}+1)}}+C\|(F_\pm^{A}u)\cdot\arrowvert\xi\arrowvert^{s}\|_{L^{2}(\arrowvert\xi\arrowvert\geqslant l)}\|\arrowvert\xi\arrowvert^{-s}\|_{L^{2}(\arrowvert\xi\arrowvert\geqslant l)}\nonumber\\
		\leqslant&C \|F_\pm^{A}u\|_{L^{2}(\mathbb{R}^{n})}\cdot l^{\frac{n}{2}}\sqrt{\frac{\pi^{\frac{n}{2}}}{\Gamma(\frac{n}{2}+1)}}+Cl^{\frac{n}{2}-s}\sqrt{\frac{2\pi^{\frac{n}{2}}}{\Gamma(\frac{n}{2})(2s-n)}}\|\arrowvert\xi\arrowvert^{s}F_\pm^{A}u\|_{L^{2}(\mathbb{R}^{n})}\nonumber\\
		=&\|E_{c}u\|_{L^{2}(\mathbb{R}^{n})}\cdot l^{\frac{n}{2}}\sqrt{\frac{\pi^{\frac{n}{2}}}{\Gamma(\frac{n}{2}+1)}}+Cl^{\frac{n}{2}-s}\sqrt{\frac{2\pi^{\frac{n}{2}}}{\Gamma(\frac{n}{2})(2s-n)}}\|\arrowvert\xi\arrowvert^{s}F_\pm^{A}u\|_{L^{2}(\mathbb{R}^{n})}\nonumber\\
		\leqslant&C\|u\|_{L^{2}(\mathbb{R}^{n})}\cdot l^{\frac{n}{2}}\sqrt{\frac{\pi^{\frac{n}{2}}}{\Gamma(\frac{n}{2}+1)}}+Cl^{\frac{n}{2}-s}\sqrt{\frac{2\pi^{\frac{n}{2}}}{\Gamma(\frac{n}{2})(2s-n)}}\|\arrowvert\xi\arrowvert^{s}F_\pm^{A}u\|_{L^{2}(\mathbb{R}^{n})}.\label{2.86}
	\end{flalign}	
	Let $s=ks_{1}$, where $0<s_{1}<1$ and $k=1,2,...$, we have
	\begin{flalign}	
		&\|\arrowvert\xi\arrowvert^{s}F_\pm^{A}u\|_{L^{2}(\mathbb{R}^{n})}\nonumber\\
		=&\|\arrowvert\xi\arrowvert^{s-ks_{1}}\cdot\arrowvert\xi\arrowvert^{ks_{1}}F_\pm^{A}u\|_{L^{2}(\mathbb{R}^{n})}\nonumber\\
		=&\|\arrowvert\xi\arrowvert^{s-ks_{1}}F_\pm^{A}(-\Delta_{A})^{\frac{ks_{1}}{2}}u\|_{L^{2}(\mathbb{R}^{n})}\nonumber\\
		=&\|F_\pm^{A}(-\Delta_{A})^{\frac{s}{2}}u\|_{L^{2}(\mathbb{R}^{n})}.\label{2.87}
	\end{flalign}

Since $(-\Delta_{A})^{\frac{s_{1}}{2}}$ has no embedded eigenvalues (see \cite{DW}), and substituting \eqref{2.87} into \eqref{2.86}, we obtain that
	\begin{flalign}
		\|E_{c}u\|_{L_{x}^{\infty}(\mathbb{R}^{n})}=&\|u\|_{L_{x}^{\infty}(\mathbb{R}^{n})}\nonumber\\
		\leqslant&Cl^{\frac{n}{2}}\sqrt{\frac{\pi^{\frac{n}{2}}}{\Gamma(\frac{n}{2}+1)}}\cdot\|u\|_{L^{2}(\mathbb{R}^{n})}\nonumber\\
		+&Cl^{\frac{n}{2}-s}\sqrt{\frac{2\pi^{\frac{n}{2}}}{\Gamma(\frac{n}{2})(2s-n)}}\cdot\|(-\Delta_{A})^{\frac{s}{2}}u\|_{L^{2}(\mathbb{R}^{n})}.\label{2.88}
	\end{flalign}
	Let $l=(\sqrt{\frac{n}{2s-n}}\|(-\Delta_{A})^{\frac{s}{2}}u\|_{L^{2}})^{\frac{1}{s}}\cdot\|u\|^{-\frac{1}{s}}_{L_{x}^{2}}$, then the last two terms of \eqref{2.88} are equal, that is,
	\begin{equation*}
		\|u\|_{L^{\infty}_{x}(\mathbb{R}^{n})}\leqslant C(\frac{n}{2s-n})^{\frac{n}{4s}}\sqrt{\frac{\pi^{\frac{n}{2}}}{\Gamma(\frac{n}{2}+1)}}\|u\|_{L^{2}_{x}(\mathbb{R}^{n})}^{1-\frac{n}{2s}}\cdot\|(-\Delta_{A})^{\frac{s}{2}}u\|_{L^{2}(\mathbb{R}^{n})}^{\frac{n}{2s}},
	\end{equation*}
	therefore, \eqref{2.82} follows. This completes the proof of Lemma \ref{lemma2.15}.
\end{proof}

\subsection{The proof of Theorem \ref{theorem1.1}}
In this subsection, we prove Theorem \ref{theorem1.1}. First, from \eqref{2.82}, we need to estimate the operator $\arrowvert J_{A}\arrowvert^{s}$. For the linear magnetic Schr\"{o}dinger equation
$$(i\partial_{t}+\Delta_{A})u=0,$$
by exchanging operators and equation, we have
\begin{flalign}
	&(i\partial_{t}+\Delta_{A})\arrowvert J_{A}\arrowvert^{s}u\nonumber\\
	=&[i\partial_{t}+\Delta_{A},\arrowvert J_{A}\arrowvert^{s}]u+\arrowvert J_{A}\arrowvert^{s}(i\partial_{t}+\Delta_{A})u\nonumber\\
	=&[i\partial_{t}+\Delta_{A},\arrowvert J_{A}\arrowvert^{s}]u.\label{2.89}
\end{flalign}

Next, we prove Theorem \ref{theorem1.1}, and the proof process is divided into four steps.

\hspace{-5mm}\textbf{Proof of Theorem \ref{theorem1.1}:}

$Step\ 1$\quad Let's review the result of the commutator in \eqref{2.8}
$$[i\partial_{t}+\Delta_{A},\arrowvert J_{A}(t)\arrowvert^{s}]=it^{s-1}M(t)V(s)M(-t),$$
where
\begin{equation*}
	V(s)=s(-\Delta_{A})^{\frac{s}{2}}+[x\cdot \nabla,(-\Delta_{A})^{\frac{s}{2}}]-i[(-\Delta_{A})^{\frac{s}{2}},x\cdot A].
\end{equation*}
Then, we have the following equation:
\begin{flalign}
	(i\partial_{t}+\Delta_{A})\arrowvert J_{A}\arrowvert^{s}u=[i\partial_{t}+\Delta_{A},\arrowvert J_{A}\arrowvert^{s}]u=it^{s-1}M(t)V(s)M(-t)u.\label{2.90}
\end{flalign}

Applying the Strichartz estimate to $\arrowvert J_{A}\arrowvert^{s}u$ of nonlinear equation \eqref{2.90}, we obtain that
\begin{flalign}
	&\|\arrowvert J_{A}\arrowvert^{s}u\|_{L_{t}^{\infty}((1,T);L_{x}^{2}(\mathbb{R}^{n}))}\nonumber\\
	\leqslant&C\|\arrowvert J_{A}\arrowvert^{s}u(1)\|_{L_{x}^{2}(\mathbb{R}^{n})}\nonumber\\
	+&C_{s}\|t^{s-1}M(t)V(s)M(-t)u\|_{L_{t}^{q}((1,T);L_{x}^{r}(\mathbb{R}^{n}))},\label{2.91}
\end{flalign}
where  $\frac{2}{q'}=n(\frac{1}{2}-\frac{1}{r'})$, $\frac{1}{q}+\frac{1}{q'}=1$, $\frac{1}{r}+\frac{1}{r'}=1$. In particular, $q=\frac{4}{4-n}$, $r=1$ when $n\leqslant3$, and $r=\frac{2nq}{q(n+4)-4}\in(1,2)$ when $n>3$.

$Step\ 2$\quad  From the Strichartz estimate of $\arrowvert J_{A}\arrowvert^{s}u$, it is necessary to estimate the second term of \eqref{2.91}. Before that, we prove the relationship of $\arrowvert J\arrowvert^{s}$ and $\arrowvert J_{A}\arrowvert^{s}$ for $s>\frac{n}{2}$ by using the equivalence property in Lemma \ref{lemma2.13}.

\begin{lemma}\label{lemma2.16}
	Let $\arrowvert J\arrowvert^{s}$ and $\arrowvert J_{A}\arrowvert^{s}$ as definition of \eqref{2.6} and \eqref{2.7} respectively. Then for $\forall\ \theta\in(0,1)$ and $s>\frac{n}{2}$, we have
	\begin{equation}\label{2.92}
		\|\arrowvert J_{A}\arrowvert^{s}u\|_{L^{2}_{x}(\mathbb{R}^{n})}\leqslant Ct^{s+\theta-\frac{n}{2}}(\|\arrowvert J\arrowvert^{\frac{n}{2}-\theta}u\|_{L^{2}_{x}(\mathbb{R}^{n})}+\|\arrowvert J\arrowvert^{s}u\|_{L^{2}_{x}(\mathbb{R}^{n})}),
	\end{equation}
	\begin{equation}\label{2.93}
		\|\arrowvert J\arrowvert^{s}u\|_{L^{2}_{x}(\mathbb{R}^{n})}\leqslant Ct^{s+\theta-\frac{n}{2}}(\|\arrowvert J_{A}\arrowvert^{\frac{n}{2}-\theta}u\|_{L^{2}_{x}(\mathbb{R}^{n})}+\|\arrowvert J_{A}\arrowvert^{s}u\|_{L^{2}_{x}(\mathbb{R}^{n})}).
	\end{equation}
\end{lemma}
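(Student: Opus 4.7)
Unfolding the definitions $|J(t)|^{s}=M(t)(-t^{2}\Delta)^{s/2}M(-t)$ and $|J_{A}(t)|^{s}=M(t)(-t^{2}\Delta_{A})^{s/2}M(-t)$, and using that $M(\pm t)$ is an $L^{2}$-isometry, the assertion \eqref{2.92} is equivalent, after pulling out the common factor $t^{s}$ and setting $v:=M(-t)u$, to
\begin{equation*}
\|(-\Delta_{A})^{s/2}v\|_{L^{2}}\leqslant C\|(-\Delta)^{(n/2-\theta)/2}v\|_{L^{2}}+Ct^{s+\theta-n/2}\|(-\Delta)^{s/2}v\|_{L^{2}},
\end{equation*}
and similarly \eqref{2.93} reduces to the symmetric counterpart with $\Delta$ and $\Delta_{A}$ swapped. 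So the task is to compare fractional powers of the two Laplacians at a single function $v$, the $t$-dependence being bookkeeping.

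My plan is to start from the splitting $(-\Delta_{A})^{s/2}=(-\Delta)^{s/2}+V_{x}$, where $V_{x}$ is the short range perturbation of Lemma \ref{lemma2.8}--\ref{lemma2.9}. The leading summand is already absorbed into the second term on the right, since $t\geqslant 1$ and $s+\theta-n/2>0$. Hence the heart of the proof is to control $\|V_{x}v\|_{L^{2}}$ by the same right-hand side. For this I would plug in the integral representation
\begin{equation*}
V_{x}v=c(s)\int_{0}^{\infty}\tau^{s/2}(\tau-\Delta_{A})^{-1}V_{3}(x)(\tau-\Delta)^{-1}v\,d\tau
\end{equation*}
of Lemma \ref{lemma2.8}, combined with the resolvent identity and the weighted $L^{r}$ bounds for $R_{0}(\tau)$ from Lemma \ref{lemma2.5}. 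Splitting $\int_{0}^{\infty}=\int_{0}^{1}+\int_{1}^{\infty}$ and exchanging the decay $\langle x\rangle^{-\beta}$ of $A$ (with $\beta>n+2$) against $\tau$-powers, as was already done in the proof of Lemma \ref{lemma2.6}, one arrives at a bound in which $v$ appears at two Sobolev levels: a subcritical level $n/2-\theta<n/2$, for which Lemma \ref{lemma2.13} gives a clean equivalence between $(-\Delta)$ and $(-\Delta_{A})$, and the top level $s$. Rebalancing these two scales against the $t^{s}$ that was pulled out at the start produces exactly the factor $t^{s+\theta-n/2}$ in front of $\|(-\Delta)^{s/2}v\|_{L^{2}}$.

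The reverse inequality \eqref{2.93} is obtained by the symmetric decomposition $(-\Delta)^{s/2}=(-\Delta_{A})^{s/2}-V_{x}$ and the same resolvent estimates, with the roles of $\Delta$ and $\Delta_{A}$ exchanged.

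The main obstacle is that for $s>n/2$ the clean $L^{2}$ equivalence of Lemma \ref{lemma2.13} is unavailable at the top Sobolev order, so one cannot simply match the two $\dot{H}^{s}$ norms. The cost of bridging this gap via $V_{x}$ is controlled only when the weighted decay $\langle x\rangle^{-\beta}$ of $A$ is exploited to its fullest in the resolvent estimates of Lemma \ref{lemma2.5}, and when the Sobolev embedding $\dot{H}^{s}\hookrightarrow L^{\infty}$ (available precisely because $s>n/2$) is used to absorb the low-frequency part of $v$. Getting the $t$-exponent down to $s+\theta-n/2$, rather than something larger, hinges on matching these two ingredients to the two scales on the right.
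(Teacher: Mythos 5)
The paper's proof takes a route genuinely different from yours. You propose to split $(-\Delta_A)^{s/2}=(-\Delta)^{s/2}+V_x$ and control $\|V_xv\|_{L^2}$ directly by unfolding the integral representation from Lemma~\ref{lemma2.8}. The paper instead establishes a quadratic-form estimate at the endpoint $s=1$, namely
\begin{equation*}
\|\sqrt{-\Delta_A}\,u\|_{L^2}^2\leqslant C_1\|\sqrt{-\Delta}\,u\|_{L^2}^2+C_2\|(\nabla\cdot A+A\cdot\nabla+|A|^2)|u|^2\|_{L^1}\leqslant C\|(-\Delta)^{\frac{n}{4}-\frac{n}{2}\delta}(1+(-\Delta)^{\frac{1}{2}-\frac{n}{4}+\frac{n}{2}\delta})u\|_{L^2}^2,
\end{equation*}
obtained by integration by parts plus H\"older and Sobolev embedding (equations (2.94)--(2.96)), a second endpoint estimate at a level $s_1'>\frac{n}{2}$ via the $V_x$ bound from Lemma~\ref{lemma2.9} (equation (2.97)), and then \emph{interpolation} in $s$ between these two endpoints to obtain (2.98). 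The $t$-factor is then just multiplication of (2.98) by $t^s$ combined with $t\geqslant1$. Interpolation between the quadratic-form endpoint and the high-order endpoint is the crucial mechanism that produces a purely homogeneous right-hand side (no stray $\|v\|_{L^2}$) at every intermediate $s$ in one shot; your proposal never invokes it.

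Your plan, as written, has concrete gaps. First, the integral representation of $V_x$ in Lemma~\ref{lemma2.8} is only valid for $0<s<2$, whereas for $n>3$ one has $s>\frac{n}{2}>2$. The paper's $s>2$ analogue (Lemma~\ref{lemma2.7}) uses the splitting $(-\Delta_A)^{s/2}=(-\Delta_A)^{(s-2k)/2}(-\Delta_A)^k$, and the resulting cross-terms are nontrivial; your sketch does not address this, so it only covers $n\leqslant3$. Second, the weighted resolvent estimates following the pattern of Lemma~\ref{lemma2.9} naturally produce bounds in the inhomogeneous scale $H^{\alpha,-\sigma}$, hence a $\|v\|_{L^{2,-\sigma}}$ term that cannot be controlled by $\|(-\Delta)^{(n/2-\theta)/2}v\|_{L^2}+\|(-\Delta)^{s/2}v\|_{L^2}$; to get a homogeneous bound you would have to rerun those estimates trading weights exclusively against Sobolev embeddings, as in the proof of Lemma~\ref{lemma2.6}, but you do not spell this out, and it is precisely the subtle point. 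Third, the claimed ``Sobolev embedding $\dot H^s\hookrightarrow L^\infty$ for $s>\frac{n}{2}$'' is not a true embedding: the homogeneous space $\dot H^s$ does not embed into $L^\infty$ for any $s$; the paper's Lemma~\ref{lemma2.15} instead uses the two-parameter Gagliardo--Nirenberg bound $\|u\|_{L^\infty}\lesssim\|u\|_{L^2}^{1-n/2s}\|(-\Delta_A)^{s/2}u\|_{L^2}^{n/2s}$. Finally, the mention of Lemma~\ref{lemma2.13} at the subcritical level is not actually needed once you have a bound in terms of free $\dot H^{n/2-\theta}$ and $\dot H^s$ norms, so its role in your argument is unclear.
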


\begin{proof}
	According to Lemma \ref{lemma2.8}, we can exchange $-\Delta_{A}$ and $-\Delta$, $\arrowvert J\arrowvert^{s}$ and $\arrowvert J_{A}\arrowvert^{s}$, therefore we only just prove \eqref{2.92}.
	
	Note that
	\begin{equation}\label{2.94}
		\|\sqrt{-\Delta_{A}}\,u\|^{2}_{L^{2}_{x}}\leqslant C_{1}\|\sqrt{-\Delta}\,u\|^{2}_{L^{2}_{x}}+C_{2}\|(\nabla\cdot A+A\cdot\nabla+\arrowvert A\arrowvert^{2})\arrowvert u\arrowvert^{2}\|_{L_{x}^{1}}.
	\end{equation}
	
	For the second term on the right-hand side of \eqref{2.94}, by H\"older's inequality and Sobolev embedding, we obtain that
	\begin{flalign}
		&\|(\nabla\cdot A+A\cdot\nabla+\arrowvert A\arrowvert^{2})\arrowvert u\arrowvert^{2}\|_{L_{x}^{1}}\nonumber\\
		\leqslant&\|(\nabla\cdot A+\arrowvert A\arrowvert^{2})\arrowvert u\arrowvert^{2}\|_{L^{1}_{x}}+\|A\cdot\nabla \arrowvert u\arrowvert^{2}\|_{L^{1}_{x}}\nonumber\\
		\leqslant&\|\nabla\cdot A+\arrowvert A\arrowvert^{2}\|_{L_{x}^{m'_{1}}}\cdot\|u\|_{L_{x}^{2m_{1}}}^{2}+\|A\|_{L_{x}^{m'_{2}}}\cdot\|(-\Delta)^{\frac{1}{4}}u\|^{2}_{L^{2m_{2}}}\nonumber\\
		\leqslant& C\|(-\Delta)^{\frac{n}{2}(\frac{1}{2}-\delta)}u\|^{2}_{L_{x}^{2}},\label{2.95}
	\end{flalign}	
	where $$\frac{1}{2m_1}=\frac{n-2n(\frac{1}{2}-\delta)}{2n}=\frac{1}{2}-(\frac{1}{2}-\delta)=\delta,\ 0<\delta<1, $$
	and $$\frac{1}{2m_2}=\frac{n-2(\frac{n}{2}-n\delta'-1)}{2n}=\frac{1}{2}-(\frac{1}{2}-\delta'-\frac{1}{n})=\delta'+\frac{1}{n}=\delta.$$
	
	Then, combining \eqref{2.94} and \eqref{2.95}, we have
	\begin{flalign}
		\|\sqrt{-\Delta_{A}}\,u\|^{2}_{L^{2}_{x}}\leqslant &C_{1}\|\sqrt{-\Delta}\,u\|^{2}_{L^{2}_{x}}+C_{2}\|(-\Delta)^{\frac{n}{2}(\frac{1}{2}-\delta)}u\|^{2}_{L_{x}^{2}}\nonumber\\
		\leqslant& C\|(-\Delta)^{\frac{n}{4}-\frac{n}{2}\delta}(1+(-\Delta)^{-\frac{n}{4}+\frac{n}{2}\delta+\frac{1}{2}})u\|^{2}_{L_{x}^{2}}.\label{2.96}
	\end{flalign}
	
	Furthermore, for every $s_{1}'>\frac{n}{2}$, from \eqref{4.19} in Lemma \ref{lemma2.9}, we obtain that
	\begin{flalign}
		&\|(-\Delta_{A})^{\frac{s'_{1}}{2}}u\|^{2}_{L_{x}^{2}}\nonumber\\
		\leqslant&\|(-\Delta_{A})^{\frac{s'_{1}}{2}}u-(-\Delta)^{\frac{s'_{1}}{2}}u\|^{2}_{L_{x}^{2}}+\|(-\Delta)^{\frac{s'_{1}}{2}}u\|^{2}_{L_{x}^{2}}\nonumber\\
		\leqslant&C\|u\|^{2}_{\dot{H}_{x}^{2k+\alpha}}+\|(-\Delta)^{\frac{s'_{1}}{2}}u\|^{2}_{L_{x}^{2}}\nonumber\\
		\leqslant&C\|(-\Delta)^{\frac{n}{4}-\frac{n}{2}\delta}(1+(-\Delta)^{-\frac{n}{4}+\frac{n}{2}\delta+\frac{2k+\alpha}{2}}+(-\Delta)^{-\frac{n}{4}+\frac{n}{2}\delta+\frac{s'_{1}}{2}})u\|^{2}_{L_{x}^{2}},\label{2.97}
	\end{flalign}
	where $0<\alpha<1$ and $k=0,1,2,...$.
	
	Applying the interpolation method to \eqref{2.96} and \eqref{2.97}, for every $\frac{1}{2}<s<s'_{1}$, we have
	\begin{flalign}
		\|(-\Delta_{A})^{\frac{s}{2}}u\|_{L_{x}^{2}}\leqslant& C\|(-\Delta)^{\frac{n}{4}-\frac{n}{2}\delta}(1+(-\Delta)^{\frac{s}{2}-\frac{n}{4}+\frac{n}{2}\delta})u\|_{L_{x}^{2}}\nonumber\\
		\leqslant&C(\|(-\Delta)^{\frac{n}{4}-\frac{n}{2}\delta}u\|_{L_{x}^{2}}+\|(-\Delta)^{\frac{s}{2}}u\|_{L_{x}^{2}}).\label{2.98}
	\end{flalign}	
	Multiplying both sides of \eqref{2.98} by $t^{s}$ gives the estimate
	$$\|\arrowvert J_{A}\arrowvert^{s}u\|_{L^{2}_{x}}\leqslant Ct^{s+n\delta-\frac{n}{2}}(\|\arrowvert J\arrowvert^{\frac{n}{2}-n\delta}u\|_{L^{2}_{x}}+\|\arrowvert J\arrowvert^{s}u\|_{L^{2}_{x}}),$$
	and letting $\theta=n\delta$, \eqref{2.92} follows. The proof of Lemma \ref{lemma2.16} is completed.
\end{proof}

Now, applying the estimates in Lemma \ref{lemma2.6}, Lemma \ref{lemma2.7}, and the relationship above to estimate $t^{s-1}M(t)V(s)M(-t)u$. The following lemmas will give us the desired results.
\begin{lemma}\label{lemma2.17}
	When $n\leqslant3$, let $A(x)$ satisfy hypothesis \eqref{1.2}, $M(-t)$ and $\arrowvert J_{A}\arrowvert^{s}$ as in \eqref{2.1} and \eqref{2.7} respectively. Then for $\forall\ \theta\in(0,1)$, $t\geqslant1$ and $0<s<2$, we have the following estimate:
	\begin{flalign}
		&\|t^{s-1}V(s)M(-t)u\|_{L^{\frac{4}{4-n}}_{t}((1,\infty);L^{1}_{x}(\mathbb{R}^{n}))}\nonumber\\
		\leqslant&C_{s}(\|\arrowvert J_{A}\arrowvert^{\frac{n}{2}-\theta}u\|_{L^{\infty}_{t}L^{2}_{x}}+\|\arrowvert J_{A}\arrowvert u\|_{L^{\infty}_{t}L^{2}_{x}}).\label{2.99}
	\end{flalign}
\end{lemma}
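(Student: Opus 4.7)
The plan is to combine the Sobolev-type bound on $V(s)$ provided by Lemma~\ref{lemma2.6} with the operator identities connecting $(-\Delta)^{\alpha/2}$ and $|J(t)|^{\alpha}$, and then to pass from $|J|^{\alpha}$ to $|J_A|^{\alpha}$ via Lemma~\ref{lemma2.13} or Lemma~\ref{lemma2.16}. Since $M(-t)$ is an isometry on every $L^p_x$, it suffices to estimate $\|V(s)M(-t)u\|_{L^1_x}$ pointwise in $t$ and then take the $L^{4/(4-n)}_t$-norm on $(1,\infty)$.

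For $s\in(1,2)$, I would apply Lemma~\ref{lemma2.6} with $r=1$ and an admissible $\alpha\in(s-1,1)$ to get
\[
\|V(s)M(-t)u\|_{L^1_x}\leqslant C\|M(-t)u\|_{\dot H^{\alpha}_x}.
\]
The defining identity $|J(t)|^{\alpha}=t^{\alpha}M(t)(-\Delta)^{\alpha/2}M(-t)$ combined with the unitarity of $M(\pm t)$ on $L^2_x$ rewrites this as
\[
\|M(-t)u\|_{\dot H^{\alpha}_x}=\|(-\Delta)^{\alpha/2}M(-t)u\|_{L^2_x}=t^{-\alpha}\bigl\||J(t)|^{\alpha}u\bigr\|_{L^2_x}.
\]
For $\alpha\in(0,n/2)$ Lemma~\ref{lemma2.13} supplies $\||J(t)|^{\alpha}u\|_{L^2_x}\sim\||J_A(t)|^{\alpha}u\|_{L^2_x}$, while for $\alpha\geqslant n/2$ I would invoke Lemma~\ref{lemma2.16} instead, which introduces an additional time factor $t^{\alpha+\theta-n/2}$. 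An interpolation of $\||J_A(t)|^{\alpha}u\|_{L^2_x}$ between the endpoints $\alpha=n/2-\theta$ and $\alpha=1$ then produces the two-term right-hand side appearing in the statement. Multiplying by $t^{s-1}$ and taking $L^{4/(4-n)}_t$ on $(1,\infty)$ reduces everything to checking integrability of a single power of $t$.

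The main obstacle is threading the parameters so the resulting time exponent is strictly below $-(4-n)/4$: one must simultaneously respect $\alpha\in(s-1,1)$ from Lemma~\ref{lemma2.6}, the interpolation range between $n/2-\theta$ and $1$, and the constraint $\alpha<n/2$ that triggers switching from Lemma~\ref{lemma2.13} to Lemma~\ref{lemma2.16}. This forces a case-analysis in $n$ and $s$, especially in the borderline regime where the $L^{4/(4-n)}_t$-integrability threshold conflicts with $\alpha<n/2$ and one has to pay the extra $t^{\alpha+\theta-n/2}$ from Lemma~\ref{lemma2.16} but then absorb it by taking $\theta$ near~$1$. Finally, the complementary range $s\in(0,1]$ (outside the direct scope of Lemma~\ref{lemma2.6} with $r=1$) would be handled by applying Lemma~\ref{lemma2.6} with some $r>1$ and then passing from $L^r_x$ to $L^1_x$ via H\"older with the weights $\langle x\rangle^{-\beta}$ that are built into the potentials $V_1,V_2$ appearing in the integral representation of $V(s)$ from Lemma~\ref{lemma2.4}.
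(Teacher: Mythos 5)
Your proposal follows the same skeleton as the paper's proof — apply Lemma~\ref{lemma2.6} with $r=1$, rewrite $\|M(-t)u\|_{\dot H^\alpha_x}=t^{-\alpha}\||J(t)|^\alpha u\|_{L^2_x}$, pass to $|J_A|$, then integrate in $t$ — but the key step where you pass from $|J|^\alpha$ to $|J_A|$ is where you go wrong, and the error is not cosmetic.

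You propose to use the equivalence Lemma~\ref{lemma2.13} when $\alpha<n/2$ (which, since $\alpha<\min\{1,s\}\leqslant1\leqslant n/2$ for $n\geqslant2$, is always the relevant regime here). That replacement costs nothing in time: you end up with $t^{-\alpha}\||J_A|^\alpha u\|_{L^2_x}$, and after multiplying by $t^{s-1}$ the surviving power is $t^{s-1-\alpha}$. Membership in $L^{4/(4-n)}_t(1,\infty)$ then forces $s-1-\alpha<-(4-n)/4$, i.e.\ $\alpha>s-n/4$; combined with $\alpha<1$ this caps $s<1+n/4$. For $n=3$ that is $s<7/4$, which falls short of the claimed range $s<2$. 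In contrast, the paper applies \eqref{2.93} from Lemma~\ref{lemma2.16} (whose proof, via interpolation between \eqref{2.96} and \eqref{2.97}, covers exponents well below $n/2$), and that inequality supplies an \emph{extra} factor $t^{\alpha+\theta-n/2}$ which, for $\alpha<n/2-\theta$, is genuine decay. Combined with $t^{-\alpha}$ the surviving power becomes $t^{s-1+\theta-n/2}$, and integrability only requires $s<3n/4-\theta$, which covers $s<2$ for $n=3$ and small $\theta$. You treat the factor $t^{\alpha+\theta-n/2}$ as a price to be absorbed ``by taking $\theta$ near~$1$,'' which is backwards on two counts: the factor is already a gain when $\alpha<n/2$, and making $\theta$ large only shrinks it. So the central missing idea is that Lemma~\ref{lemma2.16}, not Lemma~\ref{lemma2.13}, is the right bridge here precisely because it trades regularity for time decay; your route, as written, leaves a genuine gap in the range $1+n/4\leqslant s<2$.
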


\begin{proof}	
	According to \eqref{2.26} in Lemma \ref{lemma2.6}, it follows that
	\begin{flalign}
		&\|t^{s-1}V(s)M(-t)u\|_{L^{\frac{4}{4-n}}_{t}((1,\infty);L^{1}_{x}(\mathbb{R}^{n}))}\nonumber\\
		\leqslant&C_{s}\|t^{s-1}\|V(s)M(-t)u\|_{L^{1}_{x}}\|_{L^{\frac{4}{4-n}}_{t}}\nonumber\\
		\leqslant&C_{s}\|t^{s-1}\|M(-t)u\|_{\dot{H}^{\alpha}_{x}}\|_{L^{\frac{4}{4-n}}_{t}},\ 0<\alpha<\min\{1,s\}.\label{2.100}
	\end{flalign}
	
	Furthermore,
	\begin{flalign}
		&\|M(-t)u\|_{\dot{H}^{\alpha}_{x}(\mathbb{R}^{n})}\nonumber\\
		=&\|(-\Delta)^{\frac{\alpha}{2}}M(-t)u\|_{L^{2}_{x}}\nonumber\\
		=&\|M(t)(-t^{2}\Delta)^{\frac{\alpha}{2}}M(-t)u\|_{L^{2}_{x}}\cdot t^{-\alpha}\nonumber\\
		=&t^{-\alpha}\|\arrowvert J\arrowvert u\|_{L^{2}_{x}}.\label{2.101}
	\end{flalign}	
	Applying \eqref{2.93} in Lemma \ref{lemma2.16} to \eqref{2.101}, we obtain that
	\begin{equation}\label{2.102}
		\|M(-t)u\|_{\dot{H}^{\alpha}_{x}(\mathbb{R}^{n})}\leqslant Ct^{\theta-\frac{n}{2}}(\|\arrowvert J_{A}\arrowvert^{\frac{n}{2}-\theta}u\|_{L^{2}_{x}}+\|\arrowvert J_{A}\arrowvert u\|_{L^{2}_{x}}).
	\end{equation}
	
	Therefore, substituting \eqref{2.102} into \eqref{2.100} and letting $s<\frac{3n}{4}-\theta$ such that $$t^{s-1+\theta-\frac{n}{2}}\in L^{\frac{4}{4-n}}_{t}(1,\infty),$$\eqref{2.99} is obtained. The proof of Lemma \ref{lemma2.17} is completed.
\end{proof}
\begin{lemma}\label{lemma2.18}
	When $n>3$, let $A(x)$ satisfy hypothesis \eqref{1.2}, $M(-t)$ and $\arrowvert J_{A}\arrowvert^{s}$ as in \eqref{2.1} and \eqref{2.7} respectively. Then for $t\geqslant1$, $2k+1<s<2k+2$, $k=1,2,...$, and $0<\alpha<1$, we have the following estimate:
	\begin{equation}\label{2.103}
		\|t^{s-1}V(s)M(-t)u\|_{L^{q}_{t}((1,\infty);L^{r}_{x}(\mathbb{R}^{n}))}\leqslant C_{s,k}\|\arrowvert J_{A}\arrowvert^{2k+\alpha}u\|_{L^{\infty}_{t}L^{2}_{x}},
	\end{equation}
	where $\frac{1}{q}+\frac{1}{q'}=1$, $\frac{1}{r}+\frac{1}{r'}=1$ and  $(q',r')$ is admissible pair, i.e. $\frac{2}{q'}=n(\frac{1}{2}-\frac{1}{r'})$.
\end{lemma}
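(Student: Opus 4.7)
The approach is to parallel the proof of Lemma \ref{lemma2.17}, substituting the high-dimensional estimate of Lemma \ref{lemma2.7} in place of Lemma \ref{lemma2.6}. Since the statement now allows a general $L^r_x$ Strichartz exponent rather than $L^1_x$, we lose the smoothing advantage that came from $r=1$ in low dimensions but gain compatibility with the admissibility range $r\in(1,2)$ that was identified earlier in the paper. The heart of the matter is still to move the spatial regularity absorbed by $V(s)$ onto the $|J_A|^{2k+\alpha}$ side via the conjugation relation built into the definition of $|J_A|^s$.

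First, I would apply Lemma \ref{lemma2.7} pointwise in $t$, with the input function taken to be $M(-t)u$, obtaining
$$\|V(s)M(-t)u\|_{L^r_x}\leqslant C_{s,k}\|(-\Delta_A)^{(2k+\alpha)/2}M(-t)u\|_{L^2_x}$$
for any $r\in[1,2]$ and $\alpha\in(0,1)$. The next step is to rewrite the right-hand side in terms of $|J_A|^{2k+\alpha}$. Using the spectral-calculus identity $(-t^2\Delta_A)^{s'/2}=t^{s'}(-\Delta_A)^{s'/2}$ together with $M(t)M(-t)=I$ and the $L^2$-unitarity of $M(\pm t)$, one computes, with $s'=2k+\alpha$,
$$\|(-\Delta_A)^{s'/2}M(-t)u\|_{L^2_x}=t^{-s'}\|M(-t)M(t)(-t^2\Delta_A)^{s'/2}M(-t)u\|_{L^2_x}=t^{-s'}\||J_A|^{s'}u\|_{L^2_x}.$$
Multiplying by $t^{s-1}$ produces the pointwise-in-$t$ bound
$$\|t^{s-1}V(s)M(-t)u\|_{L^r_x}\leqslant C_{s,k}\,t^{s-1-2k-\alpha}\||J_A|^{2k+\alpha}u\|_{L^2_x}.$$

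Taking the $L^q_t(1,\infty)$-norm and pulling the $L^\infty_t$ sup out of the right-hand side reduces everything to the time integral $\int_1^\infty t^{q(s-1-2k-\alpha)}\,dt$, which converges exactly when $\alpha>s-1-2k+\tfrac{1}{q}$. Because $s-2k-1\in(0,1)$ by hypothesis, such a choice $\alpha\in(0,1)$ is available as long as $\tfrac{1}{q}<2+2k-s$, i.e.\ as long as the admissible pair $(q',r')$ is chosen so that $q$ is large enough. The main obstacle I anticipate is the simultaneous bookkeeping of three constraints: the range $r\in[1,2]$ demanded by Lemma \ref{lemma2.7}, the Strichartz admissibility relation $\tfrac{2}{q'}=n(\tfrac{1}{2}-\tfrac{1}{r'})$ with $\tfrac{1}{q}+\tfrac{1}{q'}=1$, and the time-integrability threshold $\alpha>s-1-2k+\tfrac{1}{q}$. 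Verifying that one can choose an admissible pair satisfying all three (e.g.\ of the form $r=\tfrac{2nq}{q(n+4)-4}\in(1,2)$ already used in Lemma \ref{lemma2.17}) is straightforward but must be carried out explicitly; this is the only genuinely delicate point beyond what is already in the proof of Lemma \ref{lemma2.17}.
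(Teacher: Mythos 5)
Your proposal is correct and follows essentially the same route as the paper: apply Lemma \ref{lemma2.7} to $M(-t)u$, use the scaling identity $\|(-\Delta_A)^{(2k+\alpha)/2}M(-t)u\|_{L^2_x}=t^{-(2k+\alpha)}\|\arrowvert J_A\arrowvert^{2k+\alpha}u\|_{L^2_x}$, and conclude by integrability of $t^{s-1-2k-\alpha}$ on $(1,\infty)$ in $L^q_t$. You are in fact somewhat more explicit than the paper about the interplay of the constraints on $\alpha$, $q$, and the admissible pair, which the paper dispatches with the phrase ``letting $1<r<2$ such that $t^{s-1-2k-\alpha}\in L^q_t(1,\infty)$.''
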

\begin{proof}	
	According to \eqref{2.54} in Lemma \ref{lemma2.7}, it follows that
	\begin{flalign}
		&\|t^{s-1}V(s)M(-t)u\|_{L^{q}_{t}((1,\infty);L^{r}_{x}(\mathbb{R}^{n}))}\nonumber\\
		\leqslant&C_{s,k}\|t^{s-1}\|V(s)M(-t)u\|_{L^{r}_{x}}\|_{L^{q}_{t}}\nonumber\\
		\leqslant&C_{s,k}\|t^{s-1}\|M(-t)u\|_{\dot{H}_{A}^{2k+\alpha}}\|_{L^{q}_{t}}\nonumber\\
		=&C_{s,k}\|t^{s-1}\cdot t^{-(2k+\alpha)}\|\arrowvert J_{A}\arrowvert^{2k+\alpha}u\|_{L^{2}_{x}}\|_{L^{q}_{t}}\nonumber\\
		\leqslant&C_{s,k}\|\arrowvert J_{A}\arrowvert^{2k+\alpha}u\|_{L^{\infty}_{t}L^{2}_{x}},\label{2.104}
	\end{flalign}
	since $r=\frac{2nq}{q(n+4)-4}$, letting $1<r<2$ such that $$t^{s-1-2k-\alpha}\in L^{q}_{t}(1,\infty),$$ \eqref{2.103} is obtained. The proof of Lemma \ref{lemma2.18} is completed.
\end{proof}

\begin{remark}\label{remark2.3}
	Since $r=\frac{2nq}{q(n+4)-4}\in(1,2)$, the value of $r$ will change with the value of $n$. According to the following examples, this result will be more clearly reflected:
	
	(1). choose $r=\frac{5}{4}$, we have $q=\frac{20}{20-3n}$, this implies $n<7$;
	
	(2). choose $r=\frac{3}{2}$, we have $q=\frac{12}{12-n}$, this implies $n<12$;
	
	(3). choose $r=\frac{15}{8}$, we have $q=\frac{60}{60-n}$, this implies $n<60$.
	
	From these examples, we conclude that the larger the selection $r$ is, the larger the $n$ is. Furthermore, the larger $n$ is, the closer $r$ is to $2$.
\end{remark}

$Step\ 3$\quad Applying the estimates in Lemma \ref{lemma2.17} and Lemma \ref{lemma2.18}, we can obtain the Strichartz estimate of $\arrowvert J_{A}\arrowvert^{s}u$.

When $n\leqslant3$, substituting \eqref{2.99} to \eqref{2.91}, for any $s>\frac{n}{2}$, it follows that
\begin{flalign}
	&\|\arrowvert J_{A}\arrowvert^{s}u\|_{L_{t}^{\infty}((1,T);L_{x}^{2}(\mathbb{R}^{n}))}\nonumber\\
	\leqslant&C\|\arrowvert J_{A}\arrowvert^{s}u(1)\|_{L_{x}^{2}}+C_{s}(\|\arrowvert J_{A}\arrowvert^{\frac{n}{2}-\theta}u\|_{L^{\infty}_{t}L^{2}_{x}}+\|\arrowvert J_{A}\arrowvert u\|_{L^{\infty}_{t}L^{2}_{x}})\nonumber\\
	\leqslant&C\|\arrowvert J_{A}\arrowvert^{s}u(1)\|_{L_{x}^{2}}+C_{s}(2C_{s}\|u_{0}\|_{L^{2}}+\frac{1}{2C_{s}}\|\arrowvert J_{A}\arrowvert^{s}u\|_{L_{t}^{\infty}L_{x}^{2}}),\label{2.105}
\end{flalign}
and when $n>3$, substituting \eqref{2.103} to \eqref{2.91}, for any $s>\frac{n}{2}$, it follows that
\begin{flalign}
	&\|\arrowvert J_{A}\arrowvert^{s}u\|_{L_{t}^{\infty}((1,T);L_{x}^{2}(\mathbb{R}^{n}))}\nonumber\\
	\leqslant&C\|\arrowvert J_{A}\arrowvert^{s}u(1)\|_{L_{x}^{2}}+C_{s,k}\|\arrowvert J_{A}\arrowvert^{2k+\alpha}u\|_{L^{\infty}_{t}L^{2}_{x}}\nonumber\\
	\leqslant&C\|\arrowvert J_{A}\arrowvert^{s}u(1)\|_{L_{x}^{2}}+C_{s}(2C_{s}\|u_{0}\|_{L^{2}}+\frac{1}{2C_{s}}\|\arrowvert J_{A}\arrowvert^{s}u\|_{L_{t}^{\infty}L_{x}^{2}}).\label{2.106}
\end{flalign}

Hence, from \eqref{2.105} and \eqref{2.106}, we obtain that
\begin{equation}\label{2.107}
	\|\arrowvert J_{A}\arrowvert^{s}u\|_{L_{t}^{\infty}((1,T);L_{x}^{2}(\mathbb{R}^{n}))}\leqslant C\|\arrowvert J_{A}\arrowvert^{s}u(1)\|_{L_{x}^{2}}.
\end{equation}
We claim that
\begin{equation}\label{2.108}
	\|\arrowvert J_{A}\arrowvert^{s}u\|_{L_{t}^{\infty}((1,\infty);L_{x}^{2}(\mathbb{R}^{n}))}\leqslant C\|u_{0}\|_{\Sigma_{s}}.
\end{equation}
In fact,
\begin{flalign}
	&\|\arrowvert J_{A}\arrowvert^{s}u(1)\|_{L_{x}^{2}}\nonumber\\
	\leqslant&C(\|\arrowvert J\arrowvert^{\frac{n}{2}-\theta}u(1)\|_{L^{2}_{x}}+\|\arrowvert J\arrowvert^{s}u(1)\|_{L^{2}_{x}})\nonumber\\
	\leqslant&C(\|u_{0}\|_{L^{2}_{x}}+\|\arrowvert J\arrowvert^{s}u(1)\|_{L^{2}_{x}})\nonumber\\
	\leqslant&C\|u_{0}\|_{\Sigma_{s}}.\label{2.109}
\end{flalign}
From \eqref{2.107} and \eqref{2.109}, it follows that
\begin{flalign*}
	\|\arrowvert J_{A}\arrowvert^{s}u\|_{L_{t}^{\infty}((1,T);L_{x}^{2}(\mathbb{R}^{n}))}\leqslant C\|\arrowvert J_{A}\arrowvert^{s}u(1)\|_{L_{x}^{2}}\leqslant C\|u_{0}\|_{\Sigma_{s}}.
\end{flalign*}
This implies the claim in \eqref{2.108}.

$Step\ 4$\quad Combining the estimate of Lemma \ref{lemma2.15} and \eqref{2.108}, we obtain that
\begin{flalign*}
	\|u\|_{L^{\infty}_{x}(\mathbb{R}^{n})}\leqslant &C\|M(-t)u\|_{L^{2}_{x}(\mathbb{R}^{n})}^{1-\frac{n}{2s}}\cdot\|(-\Delta_{A})^{\frac{s}{2}}M(-t)u\|_{L^{2}(\mathbb{R}^{n})}^{\frac{n}{2s}}\nonumber\\
	=&Ct^{-\frac{n}{2}}\|u\|_{L^{2}_{x}(\mathbb{R}^{n})}^{1-\frac{n}{2s}}\cdot\|\arrowvert J_{A}\arrowvert^{s}u\|_{L^{2}_{x}(\mathbb{R}^{n})}^{\frac{n}{2s}}\nonumber\\
	\leqslant&Ct^{-\frac{n}{2}}\|u_{0}\|_{L^{2}_{x}(\mathbb{R}^{n})}^{1-\frac{n}{2s}}\cdot\|u_{0}\|_{\Sigma_{s}}^{\frac{n}{2s}}\nonumber\\
	=&Ct^{-\frac{n}{2}}\|u_{0}\|_{\Sigma_{s}}.
\end{flalign*}
This implies \eqref{1.10} is obtained. Therefore, the proof of Theorem \ref{theorem1.1} is completed.

\section{The dispersive decay of the nonlinear magnetic Schr\"odinger equation}

In this section, we main prove the dispersive bounds in Theorem \ref{theorem1.2} of solutions of the nonlinear magnetic Schr\"odinger equation.

First, from the nonlinear magnetic Schr\"{o}dinger equation
$$(i\partial_{t}+\Delta_{A})u=\arrowvert u\arrowvert^{p-1}u,$$
by exchanging operators and equation, we have
\begin{flalign}
	&(i\partial_{t}+\Delta_{A})\arrowvert J_{A}\arrowvert^{s}u\nonumber\\
	=&[i\partial_{t}+\Delta_{A},\arrowvert J_{A}\arrowvert^{s}]u+\arrowvert J_{A}\arrowvert^{s}(i\partial_{t}+\Delta_{A})u\nonumber\\
	=&it^{s-1}M(t)V(s)M(-t)u+\arrowvert J_{A}\arrowvert^{s}(\arrowvert u\arrowvert^{p-1}u).\label{3.1}
\end{flalign}
Since the estimate of $it^{s-1}M(t)V(s)M(-t)u$ has been obtained in Section 2, we now need to estimate the term $\arrowvert J_{A}\arrowvert^{s}(\arrowvert u\arrowvert^{p-1}u)$. Before that, we obtain the properties of derivatives with the term $\arrowvert u\arrowvert^{p-1}u$ as the following lemma.

\subsection{The properties of derivatives with the term $\arrowvert u\arrowvert^{p-1}u$}

\begin{lemma}\label{lemma3.1}
	Let $f(u)=\arrowvert u\arrowvert^{p-1}u$, for $p>1+\lceil\frac{n}{2}\rceil$. Then $f(u)$ satisfies the estimate
	\begin{flalign}
		\|f(u)\|_{\dot{B}^{s}_{2,2}}\leqslant C\|u\|_{\dot{B}^{s}_{2,2}}\|u\|_{L^{\infty}}^{p-1}.\label{3.2}
	\end{flalign}
	where $0<s<m$, $\mathbb{Z}^{+}\ni m>\frac{n}{2}$.
\end{lemma}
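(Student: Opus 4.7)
The plan is to translate the Besov bound into a fractional Sobolev estimate and then attack it by a Moser / Fa\`a di Bruno expansion. Since $\dot{B}^{s}_{2,2}(\mathbb{R}^{n})=\dot{H}^{s}(\mathbb{R}^{n})$ with equivalent norms, it suffices to establish
\begin{equation*}
\|(-\Delta)^{s/2}f(u)\|_{L^{2}}\leqslant C\|u\|_{L^{\infty}}^{p-1}\|(-\Delta)^{s/2}u\|_{L^{2}},\qquad 0<s<m.
\end{equation*}
Because $f(0)=0$ and $f$ is locally Lipschitz, both sides are well-defined for $u\in\dot{H}^{s}\cap L^{\infty}$.

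For an integer value $s=k\in\{1,\dots,m-1\}$, I would apply the Fa\`a di Bruno formula: for every multi-index $\alpha$ with $|\alpha|=k$,
\begin{equation*}
D^{\alpha}f(u)=\sum c_{\alpha,(\alpha_{i})}\,f^{(j)}(u)\prod_{i=1}^{j}D^{\alpha_{i}}u,
\end{equation*}
where the sum runs over decompositions $\sum_{i=1}^{j}|\alpha_{i}|=k$ with $|\alpha_{i}|\geqslant 1$ and $1\leqslant j\leqslant k$. The hypothesis $p>1+\lceil n/2\rceil\geqslant m$ yields the pointwise control $|f^{(j)}(u)|\leqslant C|u|^{p-j}$ for $1\leqslant j\leqslant m-1$. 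I would then close the estimate by the multilinear Gagliardo--Nirenberg inequality
\begin{equation*}
\Bigl\|\prod_{i=1}^{j}D^{\alpha_{i}}u\Bigr\|_{L^{2}}\leqslant C\|u\|_{L^{\infty}}^{j-1}\|D^{k}u\|_{L^{2}},
\end{equation*}
obtained by choosing $p_{i}=2k/|\alpha_{i}|$ (so that $\sum_{i}1/p_{i}=1/2$), applying H\"older, and interpolating each factor via $\|D^{\alpha_{i}}u\|_{L^{p_{i}}}\lesssim\|u\|_{L^{\infty}}^{1-|\alpha_{i}|/k}\|D^{k}u\|_{L^{2}}^{|\alpha_{i}|/k}$. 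The exponents then combine to give exactly $\|u\|_{L^{\infty}}^{(p-j)+(j-1)}=\|u\|_{L^{\infty}}^{p-1}$, independently of the summation index $j$.

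For non-integer $s\in(0,m)$, I would pass to the Littlewood--Paley characterization $\|g\|^{2}_{\dot{B}^{s}_{2,2}}\sim\sum_{j\in\mathbb{Z}}2^{2sj}\|\Delta_{j}g\|_{L^{2}}^{2}$ and use Bony's paraproduct decomposition $f(u)=T_{f'(u)}u+T_{u}f'(u)+R(f'(u),u)$, combined with the representation $f(u)=\int_{0}^{1}f'(tu)u\,dt$. The first paraproduct is bounded by $\|f'(u)\|_{L^{\infty}}\|u\|_{\dot{B}^{s}_{2,2}}\lesssim\|u\|_{L^{\infty}}^{p-1}\|u\|_{\dot{B}^{s}_{2,2}}$, while the remaining pieces are handled using the H\"older-$(p-1)$ regularity of $f'$ at the origin together with standard paraproduct continuity. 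Equivalently, since the target estimate holds at every integer order $k<m$ with the same prefactor $\|u\|_{L^{\infty}}^{p-1}$, real interpolation applied to the nonlinear map $u\mapsto f(u)$ at fixed $\|u\|_{L^{\infty}}$ delivers the intermediate exponents.

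The main obstacle is the combinatorial tracking of the prefactor $\|u\|_{L^{\infty}}^{p-1}$: a careless H\"older split of the Fa\`a di Bruno sum produces $\|u\|_{L^{\infty}}^{p-j}$ multiplied by a product of high-order Sobolev norms, and one must verify that the $j-1$ excess derivatives in that product can be converted into precisely $j-1$ additional $L^{\infty}$ factors through the correct choice of Lebesgue exponents in multilinear Gagliardo--Nirenberg. The hypothesis $p>1+\lceil n/2\rceil$ is calibrated exactly so that $f$ admits enough classical derivatives at the origin for this computation to remain valid up to order $m-1$, after which interpolation covers the full range $s\in(0,m)$.
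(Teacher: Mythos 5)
Your integer-order argument via Fa\`a di Bruno and multilinear Gagliardo--Nirenberg is sound and yields the estimate for $s=k\in\{1,\dots,m-1\}$ with the correct prefactor $\|u\|_{L^\infty}^{p-1}$ (for complex-valued $u$ one must track derivatives in $\operatorname{Re}u$ and $\operatorname{Im}u$ separately, but this only changes constants). It is, however, a genuinely different route from the paper's. The paper never passes through an integer-order chain rule: it works directly with the finite-difference characterization \eqref{3.3} of $\dot{B}^s_{2,2}$, expands the third finite difference $\Delta^3_y f(u)$ algebraically into products of lower-order differences of $u$ and of $\omega^{p-1}$ in \eqref{3.7}, controls the $\omega^{p-1}$-factors pointwise by mean-value estimates \eqref{3.8}--\eqref{3.10}, and closes with H\"older and the Besov interpolation inequalities \eqref{3.13}--\eqref{3.14}. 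That reaches all fractional $s$ in one stroke, with no separate integer step and no interpolation of operators.

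The genuine gap in your proposal is exactly the passage from integer to non-integer $s$, which is the actual content of the lemma. Real interpolation applies to linear (or sublinear) operators; the map $u\mapsto f(u)$ is neither, and freezing $\|u\|_{L^\infty}$ does not linearize it, so ``real interpolation applied to the nonlinear map at fixed $\|u\|_{L^\infty}$'' is not a valid step --- one would need a nonlinear interpolation theorem (Tartar, Bona--Scott) with hypotheses you have not checked. The paraproduct alternative also does not close as written: $T_{f'(u)}u+T_uf'(u)+R(f'(u),u)$ decomposes the \emph{product} $f'(u)\cdot u$, which is not $f(u)$. What you would need is Bony's paralinearization $f(u)=T_{f'(u)}u+R$ with a smoothing remainder, and for the finitely-smooth nonlinearity $|u|^{p-1}u$ the remainder estimate is precisely where the work lies --- it rests on a H\"older-continuity argument for the top derivative of $f$ and the restriction $s<p$, none of which appear in your sketch. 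You would also need the endpoint estimate near $s=m$ to interpolate into $(m-1,m)$, which your integer argument omits. As written, the fractional case is asserted rather than proved.
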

\begin{proof}
	We have the following equivalent norm for the Besov space $\dot{B}^{s}_{k,l}$ (see P147 in \cite{BL} )
	\begin{flalign}
		\|v\|_{\dot{B}^{s}_{k,l}}\sim(\int_{0}^{\infty}t^{-1-sl}\mathop{\sup}\limits_{\arrowvert y\arrowvert<t}\|\Delta^{m}_{y}v\|^{l}_{L^{k}}dt)^{\frac{1}{l}}, \ 0<s<m,\label{3.3}
	\end{flalign}
	where $\Delta^{m}_{y}v(x)=\sum\limits_{j=0}^{m}C_{m}^{j}(-1)^{j}v(x+jy)$.
	
	When $m=1$, we have
	\begin{flalign}
		\|v\|_{\dot{B}^{s}_{k,l}}\sim(\int_{0}^{\infty}t^{-1-sl}\mathop{\sup}\limits_{\arrowvert y\arrowvert<t}\|\tau_{y}v-v\|^{l}_{L^{k}}dt)^{\frac{1}{l}},\label{3.4}
	\end{flalign}
	which is valid for $0<s<1$, where $\tau_{y}v$ is the translation by $y\in\mathbb{R}^{n}$, when $m=2$, as well as the norm
	\begin{flalign}
		\|v\|_{\dot{B}^{s}_{k,l}}\sim(\int_{0}^{\infty}t^{-1-sl}\mathop{\sup}\limits_{\arrowvert y\arrowvert<t}\|\tau_{y}v+\tau_{-y}v-2v\|^{l}_{L^{k}}dt)^{\frac{1}{l}},\label{3.5}
	\end{flalign}
	which is valid for $0<s<2$.
	
	Choose $m=3$, we have
	\begin{flalign}
		\|v\|_{\dot{B}^{s}_{k,l}}\sim(\int_{0}^{\infty}t^{-1-sl}\mathop{\sup}\limits_{\arrowvert y\arrowvert<t}\|\tau_{-y}v+3\tau_{y}v-3v-\tau_{2y}v\|^{l}_{L^{k}}dt)^{\frac{1}{l}},\label{3.6}
	\end{flalign}
	which is valid for $0<s<3$. Let $\omega=\arrowvert u\arrowvert$, then we have $f(u)=\omega^{p-1}u$, now we write (pointwise in $x$)
	\begin{flalign}
		&f(u(x-y))-3f(u(x))+3f(u(x+y))-f(u(x+2y))\nonumber\\	
		=&\omega^{p-1}u(x-y)-3\omega^{p-1}u(x)+3\omega^{p-1}u(x+y)-\omega^{p-1}u(x+2y)\nonumber\\
		=&\omega^{p-1}(x-y)(u(x-y)-3u(x)+3u(x+y)-u(x+2y))\nonumber\\
		+&u(x)(3\omega^{p-1}(x-y)-3\omega^{p-1}(x))+u(x+y)(3\omega^{p-1}(x+y)-3\omega^{p-1}(x-y))\nonumber\\
		+&u(x+2y)(\omega^{p-1}(x-y)-\omega^{p-1}(x+2y))\nonumber\\
		=&\omega^{p-1}(x-y)(u(x-y)-3u(x)+3u(x+y)-u(x+2y))\nonumber\\
		+&3(\omega^{p-1}(x-y)-\omega^{p-1}(x))(u(x)+u(x+2y)-2u(x+y))\nonumber\\
		+&3u(x+y)(\omega^{p-1}(x+y)+\omega^{p-1}(x-y)-2\omega^{p-1}(x))\nonumber\\
		+&u(x+2y)(-2\omega^{p-1}(x-y)+3\omega^{p-1}(x)-\omega^{p-1}(x+2y))\nonumber\\
		=&\omega^{p-1}(x-y)(u(x-y)-3u(x)+3u(x+y)-u(x+2y))\nonumber\\
		+&3(\omega^{p-1}(x-y)-\omega^{p-1}(x))(u(x)+u(x+2y)-2u(x+y))\nonumber\\
		+&3(u(x+y)-u(x+2y))(\omega^{p-1}(x+y)+\omega^{p-1}(x-y)-2\omega^{p-1}(x))\nonumber\\
		+&u(x+2y)(\omega^{p-1}(x-y)-3\omega^{p-1}(x)+3\omega^{p-1}(x+y)-\omega^{p-1}(x+2y)).\label{3.7}
	\end{flalign}
	Since
	\begin{flalign}
		&\arrowvert\omega^{p-1}(x-y)-\omega^{p-1}(x)\arrowvert\nonumber\\
		=&\arrowvert(\omega(x-y)-\omega(x))(p-1)\int_{0}^{1}(\theta \omega(x-y)+(1-\theta)\omega(x))^{p-2}d\theta\arrowvert\nonumber\\
		\leqslant&C\arrowvert\omega(x-y)-\omega(x)\arrowvert\int_{0}^{1}\arrowvert(\theta \omega(x-y)+(1-\theta)\omega(x))^{p-2}\arrowvert d\theta\nonumber\\
		\leqslant&C\arrowvert u(x-y)-u(x)\arrowvert(\max\{\arrowvert u(x-y)\arrowvert,\arrowvert u(x)\arrowvert\})^{p-2}\nonumber\\
		\leqslant&C\arrowvert u(x-y)-u(x)\arrowvert\arrowvert u\arrowvert^{p-2},\label{3.8}
	\end{flalign}
	and
	\begin{flalign}
		&\arrowvert\omega^{p-1}(x+y)+\omega^{p-1}(x-y)-2\omega^{p-1}(x)\arrowvert\nonumber\\
		=&\arrowvert(\omega^{p-1}(x+y)-\omega^{p-1}(x))-(\omega^{p-1}(x)-\omega^{p-1}(x-y))\arrowvert\nonumber\\
		=&\arrowvert((\omega(x+y)-\omega(x))(p-1)\int_{0}^{1}(\theta \omega(x+y)+(1-\theta)\omega(x))^{p-2}d\theta)\nonumber\\
		&-((\omega(x)-\omega(x-y))(p-1)\int_{0}^{1}(\theta \omega(x)+(1-\theta)\omega(x-y))^{p-2}d\theta)\arrowvert\nonumber\\
		\leqslant&C\max\{\arrowvert\omega(x+y)-\omega(x)\arrowvert,\arrowvert\omega(x)-\omega(x-y)\arrowvert\}\arrowvert\int_{0}^{1}(\theta \omega(x+y)\nonumber\\
		&+(1-\theta)\omega(x))^{p-2}d\theta-\int_{0}^{1}(\theta \omega(x)+(1-\theta)\omega(x-y))^{p-2}d\theta\arrowvert\nonumber\\
		=&C\max\{\arrowvert\omega(x+y)-\omega(x)\arrowvert,\arrowvert\omega(x)-\omega(x-y)\arrowvert\}\arrowvert\omega(x+y)-\omega(x-y)\nonumber\\
		&\arrowvert\int_{0}^{1}\int_{0}^{1}(\eta(\theta\omega(x+y) +(1-\theta)\omega(x))\nonumber\\
		&+(1-\eta)(\theta \omega(x)+(1-\theta)\omega(x-y)))^{p-3}d\eta d\theta\arrowvert\nonumber\\
		\leqslant&C\arrowvert u(x+y)-u(x)\arrowvert^{2}\arrowvert u\arrowvert^{p-3},\label{3.9}
	\end{flalign}
	and
	\begin{flalign}
		&\arrowvert\omega^{p-1}(x-y)-3\omega^{p-1}(x)+3\omega^{p-1}(x+y)-\omega^{p-1}(x+2y)\arrowvert\nonumber\\
		=&\arrowvert((\omega^{p-1}(x+y)-\omega^{p-1}(x))-(\omega^{p-1}(x)-\omega^{p-1}(x-y)))\nonumber\\
		&-((\omega^{p-1}(x+2y)-\omega^{p-1}(x+y))-(\omega^{p-1}(x+y)-\omega^{p-1}(x)))\arrowvert\nonumber\\
		\leqslant&C(\arrowvert\max\{\omega(x+y)-\omega(x),\omega(x)-\omega(x-y)\}(\omega(x+y)-\omega(x-y))\nonumber\\
		&\int_{0}^{1}\int_{0}^{1}(\eta(\theta\omega(x+y) +(1-\theta)\omega(x))\nonumber\\
		&+(1-\eta)(\theta \omega(x)+(1-\theta)\omega(x-y)))^{p-3}d\eta d\theta\nonumber\\
		&-\max\{\omega(x+2y)-\omega(x+y),\omega(x+y)-\omega(x)\}(\omega(x+2y)-\omega(x))\nonumber\\
		&\int_{0}^{1}\int_{0}^{1}(\eta(\theta\omega(x+2y) +(1-\theta)\omega(x+y))\nonumber\\
		&+(1-\eta)(\theta \omega(x+y)+(1-\theta)\omega(x)))^{p-3}d\eta d\theta)\arrowvert\nonumber
		\end{flalign}
		\begin{flalign}
		\leqslant&C\max\{\arrowvert\omega(x+y)-\omega(x)\arrowvert,\arrowvert\omega(x)-\omega(x-y)\arrowvert,\arrowvert\omega(x+2y)-\omega(x+y)\arrowvert,\nonumber\\
		&\arrowvert\omega(x+y)-\omega(x)\arrowvert\}\max\{\arrowvert\omega(x+y)-\omega(x-y)\arrowvert^{2},\arrowvert\omega(x+2y)-\omega(x)\arrowvert^{2}\}\nonumber\\
		&\int_{0}^{1}\int_{0}^{1}\int_{0}^{1}
		(\vartheta(\eta(\theta\omega(x+y) +(1-\theta)\omega(x))+(1-\eta)(\theta \omega(x)\nonumber\\
		&+(1-\theta)\omega(x-y)))+(1-\vartheta)(\eta(\theta\omega(x+2y) +(1-\theta)\omega(x+y))\nonumber\\
		&+(1-\eta)(\theta \omega(x+y)+(1-\theta)\omega(x))))^{p-4}d\vartheta d\eta d\theta\nonumber\\
		\leqslant&C\arrowvert u(x+y)-u(x)\arrowvert^{3}\arrowvert u\arrowvert^{p-4}.\label{3.10}
	\end{flalign}
	
	Substitute \eqref{3.8}-\eqref{3.10} into \eqref{3.7}, we estimate that
	\begin{flalign}
		&f(u(x-y))-3f(u(x))+3f(u(x+y))-f(u(x+2y))\nonumber\\
		\leqslant&\arrowvert\omega^{p-1}(x-y)\arrowvert\arrowvert u(x-y)-3u(x)+3u(x+y)-u(x+2y)\arrowvert\nonumber\\
		&+C\arrowvert\omega^{p-1}(x-y)-\omega^{p-1}(x)\arrowvert\arrowvert u(x)+u(x+2y)-2u(x+y)\arrowvert\nonumber\\
		&+C\arrowvert u(x+y)-u(x+2y)\arrowvert\arrowvert\omega^{p-1}(x+y)+\omega^{p-1}(x-y)-2\omega^{p-1}(x)\arrowvert\nonumber\\
		&+\arrowvert u(x+2y)\arrowvert\arrowvert\omega^{p-1}(x-y)-3\omega^{p-1}(x)+3\omega^{p-1}(x+y)-\omega^{p-1}(x+2y)\arrowvert\nonumber\\
		\leqslant&C\arrowvert u\arrowvert^{p-1}\arrowvert\arrowvert u(x-y)-3u(x)+3u(x+y)-u(x+2y)\arrowvert\nonumber\\
		&+C\arrowvert u\arrowvert^{p-2}\arrowvert u(x+y)-u(x)\arrowvert^{2}\arrowvert u(x)+u(x+2y)-2u(x+y)\arrowvert\nonumber\\
		&+C\arrowvert u(x+y)-u(x)\arrowvert^{3}\arrowvert u\arrowvert^{p-3}+C\arrowvert u\arrowvert\arrowvert u(x+y)-u(x)\arrowvert^{3}\arrowvert u\arrowvert^{p-4}\nonumber\\
		\leqslant&C\arrowvert u\arrowvert^{p-1}\arrowvert\arrowvert u(x-y)-3u(x)+3u(x+y)-u(x+2y)\arrowvert\nonumber\\
		&+C\arrowvert u\arrowvert^{p-2}\arrowvert u(x)+u(x+2y)-2u(x+y)\arrowvert^{2}\nonumber\\
		&+C\arrowvert u\arrowvert^{p-3}\arrowvert u(x+y)-u(x)\arrowvert^{3}.\label{3.11}
	\end{flalign}
	Substitute \eqref{3.11} into \eqref{3.6}, we apply the H\"{o}lder inequality to estimate the $L^{2}$ norms of the three terms in the right hand side of \eqref{3.11}, and we use again \eqref{3.4} and \eqref{3.5} to reconstruct suitable Besov norms, then obtaining
	\begin{flalign}
		\|f(u)\|_{\dot{B}^{s}_{2,2}}\leqslant C\|u\|_{\dot{B}^{s}_{2,2}}\|u\|_{L^{\infty}}^{p-1}+C\|u\|^{2}_{\dot{B}^{\frac{s}{2}}_{4,4}}\|u\|_{L^{\infty}}^{p-2}+C\|u\|^{3}_{\dot{B}^{\frac{s}{3}}_{6,6}}\|u\|_{L^{\infty}}^{p-3}.\label{3.12}
	\end{flalign}
	We finally use the interpolation
	\begin{flalign}
		\|u\|^{2}_{\dot{B}^{\frac{s}{2}}_{4,4}}\leqslant\|u\|_{\dot{B}^{s}_{2,2}}\|u\|_{\dot{B}^{0}_{\infty,\infty}},\ 0<\frac{s}{2}<2,\label{3.13}
	\end{flalign}
	and
	\begin{flalign}
		\|u\|^{3}_{\dot{B}^{\frac{s}{3}}_{6,6}}\leqslant\|u\|_{\dot{B}^{s}_{2,2}}\|u\|^{2}_{\dot{B}^{0}_{\infty,\infty}},\ 0<\frac{s}{3}<1,\label{3.14}
	\end{flalign}
	and the inclusion $L^{\infty}\subset\dot{B}^{0}_{\infty,\infty}$ to conclude that
	\begin{flalign*}
		\|f(u)\|_{\dot{B}^{s}_{2,2}}\leqslant C\|u\|_{\dot{B}^{s}_{2,2}}\|u\|_{L^{\infty}}^{p-1}.
	\end{flalign*}
	Which implies the proof of \eqref{3.2} for $0<s<3$, and $p>3$.
	
	When $m\geqslant4$, due to the complexity of the calculation and the inability to write it in the form of a general term, the proof method is the same as $n\leqslant3$, so here we omit the proof process. Therefore, the Lemma \ref{lemma3.1} is completed.
\end{proof}

\begin{remark}\label{remark 3.1}
	Because for any $m>0$, similar to proving that $m=3$, we can gather the differential form and estimate the nonlinear term $f(u)$. Let $\mathbb{Z}^{+}\ni m>\frac{n}{2}$ such that $s>\frac{n}{2}$. Therefore, in the proof of Theorem \ref{theorem1.2}, we have $s>\frac{n}{2}$.
\end{remark}

\subsection{The proof of Theorem \ref{theorem1.2}}
Next, we prove Theorem \ref{theorem1.2}, and the proof process is divided into four steps.

\hspace{-5mm}\textbf{Proof of Theorem \ref{theorem1.2}:}

$Step\ 1$\quad  Recall the nonlinear magnetic Schr\"odinger equation \eqref{1.1}
$$(i\partial_{t}+\Delta_{A})u=\arrowvert u\arrowvert^{p-1}u,$$
according the results for commutators of operators in \eqref{2.90},
we have the following equation:
\begin{flalign}
	&(i\partial_{t}+\Delta_{A})\arrowvert J_{A}\arrowvert^{s}u\nonumber\\
	=&[i\partial_{t}+\Delta_{A},\arrowvert J_{A}\arrowvert^{s}]u+\arrowvert J_{A}\arrowvert^{s}(i\partial_{t}+\Delta_{A})u\nonumber\\
	=&it^{s-1}M(t)V(s)M(-t)u+\arrowvert J_{A}\arrowvert^{s}(\arrowvert u\arrowvert^{p-1}u).\label{3.15}
\end{flalign}

Combining the estimates of the linear equation in Section 2, applying the Strichartz estimate to solution $\arrowvert J_{A}\arrowvert^{s}u$ of the nonlinear equation \eqref{3.15}, we obtain that
\begin{flalign}
	&\|\arrowvert J_{A}\arrowvert^{s}u\|_{L_{t}^{\infty}((1,T);L_{x}^{2}(\mathbb{R}^{n}))}\nonumber\\
	\leqslant&C\|u_{0}\|_{\Sigma_{s}}+C_{s}\|\arrowvert J_{A}\arrowvert^{s}(\arrowvert u\arrowvert^{p-1}u)\|_{L_{t}^{1}((1,T);L_{x}^{2}(\mathbb{R}^{n}))}.\label{3.16}
\end{flalign}

$Step\ 2$\quad From \eqref{3.16}, we need to estimate $\arrowvert J_{A}\arrowvert^{s}(\arrowvert u\arrowvert^{p-1}u)$.

Applying the estimate of the nonlinear term $f(u)=\arrowvert u\arrowvert^{p-1}u$, we have the following lemma.
\begin{lemma}\label{lemma3.2}
	Let $A(x)$ satisfy hypothesis \eqref{1.2}, $\arrowvert J\arrowvert^{s}$ and $\arrowvert J_{A}\arrowvert^{s}$ as in \eqref{2.6} and \eqref{2.7} respectively. Then for $s>\frac{n}{2}$ and $p>1+\lceil\frac{n}{2}\rceil$, we have
	\begin{flalign}
		&\|\arrowvert J_{A}\arrowvert^{s}(\arrowvert u\arrowvert^{p-1}u)\|_{L_{t}^{1}L_{x}^{2}}\nonumber\\
		\leqslant&\|u_{0}\|^{(1-\frac{n}{2s})(p-1)}_{L_{x}^{2}}\cdot\|\arrowvert J_{A}\arrowvert^{s}u\|_{L_{t}^{\infty}L_{x}^{2}}^{\frac{n(p-1)}{2s}}\cdot(\|\arrowvert J_{A}\arrowvert^{\frac{n}{2}-\theta}u\|_{L_{t}^{\infty}L_{x}^{2}}\nonumber\\
		&+\|\arrowvert J_{A}\arrowvert^{s}u\|_{L_{t}^{\infty}L_{x}^{2}}+\|\arrowvert J_{A}\arrowvert u\|_{L_{t}^{\infty}L_{x}^{2}}).\label{3.17}
	\end{flalign}
\end{lemma}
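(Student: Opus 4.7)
The pivotal observation is that the modulation $M(-t)=e^{i|x|^2/(4t)}$ restricted to its conjugate is unimodular, so $|M(-t)v|=|v|$ and hence
\[
M(-t)\bigl(|u|^{p-1}u\bigr)=|M(-t)u|^{p-1}M(-t)u=f(M(-t)u),\quad f(v):=|v|^{p-1}v.
\]
Combined with the definition $|J_A|^s=M(t)(-t^2\Delta_A)^{s/2}M(-t)$ this gives the working identity $\||J_A|^sf(u)\|_{L^2_x}=t^s\|(-\Delta_A)^{s/2}f(M(-t)u)\|_{L^2_x}$. Because $s>\frac{n}{2}$ lies outside the range of the equivalence in Lemma \ref{lemma2.13}, I first invoke Lemma \ref{lemma2.16}, inequality \eqref{2.92}, to trade the magnetic operator for two non-magnetic ones:
\[
\||J_A|^sf(u)\|_{L^2_x}\leqslant Ct^{s+\theta-\frac{n}{2}}\bigl(\||J|^{\frac{n}{2}-\theta}f(u)\|_{L^2_x}+\||J|^sf(u)\|_{L^2_x}\bigr).
\]

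For each exponent $s'\in\{\tfrac{n}{2}-\theta,s\}$, the same multiplier identity (with $-\Delta_A$ replaced by $-\Delta$) yields $\||J|^{s'}f(u)\|_{L^2_x}=t^{s'}\|f(M(-t)u)\|_{\dot H^{s'}}$, and Lemma \ref{lemma3.1} combined with $\dot H^{s'}=\dot B^{s'}_{2,2}$ and $\|M(-t)u\|_{L^\infty}=\|u\|_{L^\infty}$ then produces the chain--rule--type bound
\[
\||J|^{s'}f(u)\|_{L^2_x}\leqslant C\||J|^{s'}u\|_{L^2_x}\|u\|_{L^\infty}^{p-1}.
\]
I then return to magnetic norms: the exponent $\frac{n}{2}-\theta$ lies in $(0,\frac{n}{2})$, so Lemma \ref{lemma2.13} gives $\||J|^{\frac{n}{2}-\theta}u\|_{L^2_x}\sim\||J_A|^{\frac{n}{2}-\theta}u\|_{L^2_x}$; for $s'=s>\frac{n}{2}$ a second application of Lemma \ref{lemma2.16}, now inequality \eqref{2.93}, produces a factor $Ct^{s+\theta-\frac{n}{2}}\bigl(\||J_A|^{\frac{n}{2}-\theta}u\|_{L^2_x}+\||J_A|^su\|_{L^2_x}\bigr)$. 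An auxiliary interpolation between $\dot H^1_A$ and $\dot H^s_A$, performed in the same manner as step \eqref{2.98} in the proof of Lemma \ref{lemma2.16}, is exactly what accounts for the $\||J_A|u\|_{L^2_x}$ summand appearing in the target.

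Finally, to dispatch $\|u\|_{L^\infty}^{p-1}$ I apply Lemma \ref{lemma2.15} to $M(-t)u$ and use conservation of the $L^2$--mass for \eqref{1.1} (obtained by testing the equation against $\bar u$ and using self--adjointness of $-\Delta_A$), which gives
\[
\|u\|_{L^\infty}\leqslant Ct^{-\frac{n}{2}}\|u_0\|_{L^2}^{1-\frac{n}{2s}}\||J_A|^su\|_{L^2_x}^{\frac{n}{2s}}.
\]
Assembling everything yields a pointwise--in--$t$ estimate
\[
\||J_A|^sf(u)\|_{L^2_x}\leqslant C\,t^{2(s+\theta-\frac{n}{2})-\frac{n(p-1)}{2}}\|u_0\|_{L^2}^{(1-\frac{n}{2s})(p-1)}\||J_A|^su\|_{L^2_x}^{\frac{n(p-1)}{2s}}\Bigl(\||J_A|^{\frac{n}{2}-\theta}u\|_{L^2_x}+\||J_A|^su\|_{L^2_x}+\||J_A|u\|_{L^2_x}\Bigr),
\]
and taking the $L^1$--norm in $t\in(1,\infty)$ closes the bound once the exponent $2(s+\theta-\frac{n}{2})-\frac{n(p-1)}{2}$ is strictly below $-1$, which is arranged by choosing $\theta\in(0,1)$ small enough under the standing hypothesis $p>1+\lceil\frac{n}{2}\rceil$. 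The principal obstacle is precisely this bookkeeping: each interchange between $|J|$-- and $|J_A|$--norms costs a factor $t^{s+\theta-\frac{n}{2}}$, and the final integrability in time hinges on the dispersive gain $t^{-n(p-1)/2}$ from Lemma \ref{lemma2.15} dominating two such losses simultaneously, leaving essentially no room in the choice of $\theta$.
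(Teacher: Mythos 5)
Your proposal follows essentially the same skeleton as the paper: apply \eqref{2.92} to replace $\arrowvert J_{A}\arrowvert^{s}$ by the two nonmagnetic norms $\arrowvert J\arrowvert^{\frac{n}{2}-\theta},\arrowvert J\arrowvert^{s}$, factor out $\|u\|_{L^\infty}^{p-1}$ via Lemma \ref{lemma3.1} on $f(M(-t)u)$ exactly as in \eqref{3.18}, then convert back and insert the dispersive bound from Lemma \ref{lemma2.15} together with $L^2$-mass conservation. The one genuine variation is in the reconversion step: you send $\|\arrowvert J\arrowvert^{\frac{n}{2}-\theta}u\|_{L^2_x}$ directly to $\|\arrowvert J_{A}\arrowvert^{\frac{n}{2}-\theta}u\|_{L^2_x}$ via the $s<\frac{n}{2}$ equivalence in Lemma \ref{lemma2.13} applied to $M(-t)u$, whereas the paper splits $\arrowvert J\arrowvert^{s'}u=(\arrowvert J\arrowvert^{s'}-\arrowvert J_{A}\arrowvert^{s'})u+\arrowvert J_{A}\arrowvert^{s'}u$, estimates the commutator through $\|M(-t)u\|_{\dot H^1_x}$, and then re-applies \eqref{2.93}; that detour is precisely what generates the extra $\|\arrowvert J_{A}\arrowvert u\|_{L^\infty_tL^2_x}$ summand in \eqref{3.17}. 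Your route never produces that term, which is perfectly fine since adding a nonnegative quantity to the right-hand side only weakens the inequality --- but then your remark about an ``auxiliary interpolation between $\dot H^1_A$ and $\dot H^s_A$ accounting for the $\|\arrowvert J_A\arrowvert u\|$ summand'' is both unnecessary and a little misleading: nothing needs to be interpolated, and you should simply say the target bound follows a fortiori. One more quibble: $M(-t)=e^{-i\arrowvert x\arrowvert^2/(4t)}$, not $e^{+i\arrowvert x\arrowvert^2/(4t)}$, though this does not affect the argument since only $\arrowvert M(\pm t)v\arrowvert=\arrowvert v\arrowvert$ is used. Your time-exponent $2(s+\theta-\tfrac{n}{2})-\tfrac{n(p-1)}{2}$ is the natural one; like the paper's \eqref{3.21}, integrability over $(1,\infty)$ forces $s$ close to $\tfrac{n}{2}$ and $\theta$ small for the given $p$, so the lemma should be read as holding for such admissible $(s,\theta)$ rather than for every $s>\tfrac{n}{2}$ --- worth stating explicitly, as you partly do.
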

\begin{proof}
	Let $v=e^{\frac{-i\arrowvert x\arrowvert^{2}}{4t}}u$, noticing the norm of $\dot{H}^{s}$ is equivalent to that of $\dot{B}^{s}_{2,2}$ (see also Lemma 2.3 in \cite{HN} and Lemma 3.4 in \cite{GOV}), from Lemma \ref{lemma3.1}, we have the following estimate for $$0<s<\gamma, \ \gamma={\rm min}(m,p),$$ where $m\in\mathbb{Z}_{+}$,  $p>1+\lceil\frac{n}{2}\rceil$,
	\begin{flalign}
		&\|\arrowvert J\arrowvert^{s}(\arrowvert u\arrowvert^{p-1}u)\|_{L^{2}_{x}}\nonumber\\
		=&t^{s}\cdot\|\arrowvert v\arrowvert^{p-1}v\|_{\dot{H}^{s}_{x}}\nonumber\\
		\leqslant&C\|v\|^{p-1}_{L_{x}^{\infty}}\cdot t^{s}\cdot\|v\|_{\dot{H}^{s}_{x}}\nonumber\\
		\leqslant&C\|u\|^{p-1}_{L_{x}^{\infty}}\cdot\|(-t^{2}\Delta)^{\frac{s}{2}}e^{\frac{-i\arrowvert x\arrowvert^{2}}{4t}}u\|_{L_{x}^{2}}\nonumber\\
		\leqslant&C\|u\|^{p-1}_{L_{x}^{\infty}}\cdot\|\arrowvert J\arrowvert^{s}u\|_{L^{2}_{x}}.\label{3.18}
	\end{flalign}
	
	Hence, applying Lemma \ref{lemma2.15}, Lemma \ref{lemma2.16} and \eqref{3.18}, we have
	\begin{flalign}
		&\|\arrowvert J_{A}\arrowvert^{s}(\arrowvert u\arrowvert^{p-1}u)\|_{L_{t'}^{1}((1,t);L_{x}^{2}(\mathbb{R}^{n}))}\nonumber\\
		\leqslant&C\|\langle t'\rangle^{s+\theta-\frac{n}{2}}(\|\arrowvert J\arrowvert^{\frac{n}{2}-\theta}(\arrowvert u\arrowvert^{p-1}u)\|_{L_{x}^{2}}+\|\arrowvert J\arrowvert^{s}(\arrowvert u\arrowvert^{p-1}u)\|_{L_{x}^{2}})\|_{L_{t'}^{1}}\nonumber\\
		\leqslant&C\|\langle t'\rangle^{s+\theta-\frac{n}{2}}\cdot\|u\|^{p-1}_{L_{x}^{\infty}}\cdot(\|\arrowvert J\arrowvert^{\frac{n}{2}-\theta}u\|_{L_{x}^{2}}+\|\arrowvert J\arrowvert^{s}u\|_{L_{x}^{2}})\|_{L_{t'}^{1}}\nonumber\\
		\leqslant&C\|\langle t'\rangle^{s+\theta-\frac{n}{2}}\cdot\|u\|^{p-1}_{L_{x}^{\infty}}\cdot(\| \arrowvert J\arrowvert^{\frac{n}{2}-\theta}u-\arrowvert J_{A}\arrowvert^{\frac{n}{2}-\theta}u\|_{L_{x}^{2}}+\|\arrowvert J_{A}\arrowvert^{\frac{n}{2}-\theta}u\|_{L_{x}^{2}}\nonumber\\
		&+\|\arrowvert J\arrowvert^{s}u-\arrowvert J_{A}\arrowvert^{s}u\|_{L_{x}^{2}}+\|\arrowvert J_{A}\arrowvert^{s}u\|_{L_{x}^{2}})\|_{L_{t'}^{1}}\nonumber\\
		\leqslant&C\|\langle t'\rangle^{s+\theta-\frac{n}{2}}\cdot\|u\|^{p-1}_{L_{x}^{\infty}}\cdot(\langle t'\rangle^{\frac{n}{2}-\theta}\|M(-t')u\|_{\dot{H}_{x}^{1}}+\|\arrowvert J_{A}\arrowvert^{\frac{n}{2}-\theta}u\|_{L_{x}^{2}}\nonumber\\
		&+\langle t'\rangle^{s}\|M(-t')u\|_{\dot{H}_{x}^{1}}+\|\arrowvert J_{A}\arrowvert^{s}u\|_{L_{x}^{2}})\|_{L_{t'}^{1}}\nonumber\\
		\leqslant&C\|\langle t'\rangle^{s+\theta-\frac{n}{2}}\cdot\|u\|^{p-1}_{L_{x}^{\infty}}\cdot[\langle t'\rangle^{\frac{n}{2}-\theta+\theta-\frac{n}{2}}(\|\arrowvert J_{A}\arrowvert^{\frac{n}{2}-\theta}u\|_{L_{x}^{2}}+\|\arrowvert J_{A}\arrowvert u\|_{L_{x}^{2}})\nonumber\\
		&+\|\arrowvert J_{A}\arrowvert^{\frac{n}{2}-\theta}u\|_{L_{x}^{2}}+\langle t'\rangle^{s+\theta-\frac{n}{2}}(\|\arrowvert J_{A}\arrowvert^{\frac{n}{2}-\theta}u\|_{L_{x}^{2}}+\|\arrowvert J_{A}\arrowvert u\|_{L_{x}^{2}})+\|\arrowvert J_{A}\arrowvert^{s}u\|_{L_{x}^{2}}]\|_{L_{t'}^{1}}\nonumber\\
		\leqslant&C\|\langle t'\rangle^{s+\theta-\frac{n}{2}}\cdot\|u\|^{p-1}_{L_{x}^{\infty}}\cdot(2\|\arrowvert J_{A}\arrowvert^{\frac{n}{2}-\theta}u\|_{L_{x}^{2}}+\|\arrowvert J_{A}\arrowvert u\|_{L_{x}^{2}}+\|\arrowvert J_{A}\arrowvert^{s}u\|_{L_{x}^{2}}\nonumber\\
		&+\langle t'\rangle^{s+\theta-\frac{n}{2}}(\|\arrowvert J_{A}\arrowvert^{\frac{n}{2}-\theta}u\|_{L_{x}^{2}}+\|\arrowvert J_{A}\arrowvert u\|_{L_{x}^{2}}))\|_{L_{t'}^{1}}.\label{3.19}
	\end{flalign}
	And then, again applying Lemma \ref{lemma2.16}, we can continue the estimate as follows:
	\begin{flalign}
		&......\nonumber\\
		\leqslant&C\|\langle t'\rangle^{2s+2\theta-n}\cdot\|u\|^{p-1}_{L_{x}^{\infty}}\cdot
		(\|\arrowvert J_{A}\arrowvert^{\frac{n}{2}-\theta}u\|_{L_{x}^{2}}+\|\arrowvert J_{A}\arrowvert^{s}u\|_{L_{x}^{2}}+\|\arrowvert J_{A}\arrowvert u\|_{L_{x}^{2}})\|_{L_{t'}^{1}}\nonumber\\
		\leqslant&C\int_{1}^{t}\langle t'\rangle^{2s+2\theta-n+\frac{n(p-1)}{2s}(\theta-\frac{n}{2})}\cdot\|u\|_{L_{x}^{2}}^{(1-\frac{n}{2s})(p-1)}\cdot\|\arrowvert J_{A}\arrowvert^{s}u\|_{L_{x}^{2}}^{\frac{n(p-1)}{2s}}\nonumber\\
		&\cdot(\|\arrowvert J_{A}\arrowvert^{\frac{n}{2}-\theta}u\|_{L_{x}^{2}}+\|\arrowvert J_{A}\arrowvert^{s}u\|_{L_{x}^{2}}+\|\arrowvert J_{A}\arrowvert u\|_{L_{x}^{2}})dt'.\label{3.20}
	\end{flalign}
	Since $p>1+\lceil\frac{n}{2}\rceil$, we can choose $s>\frac{n}{2}$ and $\theta>0$ such that
	\begin{equation}\label{3.21}
		\int_{1}^{t}\langle t'\rangle^{2s+2\theta-n+\frac{n(p-1)}{2s}(\theta-\frac{n}{2})}dt'<C.
	\end{equation}
	Applying \eqref{3.21} to \eqref{3.20}, \eqref{3.17} follows. Therefore the proof of Lemma \ref{lemma3.2} is completed.
\end{proof}

$Step\ 3$\quad  Applying the result of Lemma \ref{lemma3.2} to \eqref{3.16}, it follows that for any $ s>\frac{n}{2}$ and $p>1+\lceil\frac{n}{2}\rceil$,
\begin{flalign}
	&\|\arrowvert J_{A}\arrowvert^{s}u\|_{L_{t}^{\infty}((1,T);L_{x}^{2}(\mathbb{R}^{n}))}\nonumber\\
	\leqslant&C\|u_{0}\|_{\Sigma_{s}}+C_{s}\|u_{0}\|^{(1-\frac{n}{2s})(p-1)}_{L_{x}^{2}}\cdot\|\arrowvert J_{A}\arrowvert^{s}u\|_{L_{t}^{\infty}L_{x}^{2}}^{\frac{n(p-1)}{2s}}\nonumber\\
	&(\|\arrowvert J_{A}\arrowvert^{\frac{n}{2}-\theta}u\|_{L_{t}^{\infty}L_{x}^{2}}+\|\arrowvert J_{A}\arrowvert^{s}u\|_{L_{t}^{\infty}L_{x}^{2}}+\|\arrowvert J_{A}\arrowvert u\|_{L_{t}^{\infty}L_{x}^{2}})\label{3.22}
\end{flalign}
on any interval $(1,T)$ with a constant $C_{s}$ independent of $T$. Hence the proof of
\begin{equation}\label{3.23}
	\|\arrowvert J_{A}\arrowvert^{s}u\|_{L_{t}^{\infty}((1,\infty);L_{x}^{2}(\mathbb{R}^{n}))}\leqslant C\|u_{0}\|_{\Sigma_{s}}
\end{equation}
follows by a standard continuity argument provided that we fix $\|u_{0}\|_{\Sigma_{s}}$ sufficiently small (also see the proof of Theorem 1.1 in \cite{CGV}).

$Step\ 4$\quad  Combining the estimate of Lemma \ref{lemma2.15} and \eqref{3.23}, we obtain that for any $s>\frac{n}{2}$ and  $p>1+\lceil\frac{n}{2}\rceil$
\begin{flalign}
	\|u\|_{L^{\infty}_{x}(\mathbb{R}^{n})}\leqslant &C\|M(-t)u\|_{L^{2}_{x}(\mathbb{R}^{n})}^{1-\frac{n}{2s}}\cdot\|(-\Delta_{A})^{\frac{s}{2}}M(-t)u\|_{L^{2}(\mathbb{R}^{n})}^{\frac{n}{2s}}\nonumber\\
	=&Ct^{-\frac{n}{2}}\|u\|_{L^{2}_{x}(\mathbb{R}^{n})}^{1-\frac{n}{2s}}\cdot\|\arrowvert J_{A}\arrowvert^{s}u\|_{L^{2}_{x}(\mathbb{R}^{n})}^{\frac{n}{2s}}\nonumber\\
	\leqslant&Ct^{-\frac{n}{2}}\|u\|_{L^{2}_{x}(\mathbb{R}^{n})}^{1-\frac{n}{2s}}\cdot\|u_{0}\|_{\Sigma_{s}(\mathbb{R}^{n})}^{\frac{n}{2s}}\nonumber\\
	\leqslant&Ct^{-\frac{n}{2}}\|u_{0}\|_{\Sigma_{s}(\mathbb{R}^{n})}.\label{3.24}
\end{flalign}
This implies \eqref{1.11} is obtained. Therefore, the proof of Theorem \ref{theorem1.2} is completed.

\phantomsection
\addcontentsline{toc}{section}{Appendix A. The global well-posedness}
\section*{Appendix A. The global well-posedness}
\setcounter{equation}{0}
\setcounter{subsection}{0}
\setcounter{theorem}{0}
\setcounter{remark}{0}
\renewcommand{\theequation}{A.\arabic{equation}}
\renewcommand{\thesubsection}{A.\arabic{subsection}}
\renewcommand{\thetheorem}{A.\arabic{theorem}}
\renewcommand{\theremark}{A.\arabic{remark}}
	
	In this appendix, we prove that the nonlinear magnetic Schr\"{o}dinger initial value problem is globally well-posed in $L^{\infty}_{t,x}((1,\infty)\times\mathbb{R}^{n})$ for $u_{0}\in\Sigma_{s}$ ($s>\frac{n}{2}$).
	\begin{theorem}\label{theorem A.1}
		Let $A(x)$ satisfy hypothesis of Theorem \ref{theorem1.2}, $s>\frac{n}{2}$, $p>1+\lceil\frac{n}{2}\rceil$, and $\arrowvert J_{A}\arrowvert^{s}u\in L_{t}^{\infty}((1,\infty);L_{x}^{2}(\mathbb{R}^{n}))$ be a solution of equation \eqref{3.1}. If $u_{0}\in L^{2}_{x}(\mathbb{R}^{n})$, $\arrowvert J_{A}\arrowvert^{s}u(1)\in L^{2}_{x}(\mathbb{R}^{n})$ and $\|u_{0}\|_{L^{2}_{x}}$ is small enough, then the nonlinear Schr\"odinger equation \eqref{3.1} is globally well-posed.
	\end{theorem}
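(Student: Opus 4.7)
The plan is to combine a local well-posedness theorem for strong solutions of \eqref{1.1} (from Nakamura--Shimomura \cite{NS}) with the a priori bounds already derived in the course of proving Theorem \ref{theorem1.2}, and then rule out finite-time blowup by a continuation argument. By \cite{NS}, there is a maximal existence interval $[1, T^*)$ on which a unique strong solution $u$ exists, together with a standard continuation criterion asserting that if $T^* < \infty$ then a suitable Sobolev-type norm compatible with $\||J_A|^s u(t)\|_{L^2_x}$ must blow up as $t \uparrow T^*$.

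First I would establish $L^2$-conservation. Multiplying \eqref{1.1} by $\bar u$, integrating in $x$, taking imaginary parts, and using that $-\Delta_A$ is self-adjoint in $L^2$ together with the fact that $\int |u|^{p-1}|u|^2\,dx$ is real, one obtains $\tfrac{d}{dt}\|u(t)\|_{L^2_x}^2 = 0$, whence $\|u(t)\|_{L^2_x} = \|u_0\|_{L^2_x}$ on $[1,T^*)$.

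Next, on each subinterval $[1,T] \subset [1,T^*)$, I would rerun the Strichartz-and-bootstrap argument of Steps 1--3 in the proof of Theorem \ref{theorem1.2}. Using Lemma \ref{lemma2.1} to compute the commutator in \eqref{3.15}, Lemmas \ref{lemma2.17}--\ref{lemma2.18} to estimate the $V(s)$-term, and Lemma \ref{lemma3.2} to estimate the nonlinear term, together with the smallness hypothesis $\|u_0\|_{\Sigma_s} \ll 1$, the argument closes and yields
\begin{equation*}
\||J_A|^s u\|_{L^\infty_t([1,T]; L^2_x)} \leq C\|u_0\|_{\Sigma_s}
\end{equation*}
with a constant $C$ independent of $T$. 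Combining this bound with $L^2$-conservation and Lemma \ref{lemma2.15} then gives the dispersive decay $\|u(t)\|_{L^\infty_x} \leq C t^{-n/2}\|u_0\|_{\Sigma_s}$ uniformly on $[1,T^*)$, so $u \in L^\infty_{t,x}((1,T^*)\times \mathbb{R}^n)$.

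Because $\||J_A|^s u(t)\|_{L^2_x}$ remains uniformly bounded as $t \uparrow T^*$, the continuation criterion forces $T^* = \infty$, which gives global well-posedness in $L^\infty_{t,x}((1,\infty)\times\mathbb{R}^n)$. The main technical obstacle will be verifying that the bootstrap closure in Step 3 of the proof of Theorem \ref{theorem1.2} produces a constant that is genuinely uniform in $T$: this amounts to checking that the Strichartz pieces and the time-weighted estimates in Lemmas \ref{lemma2.17}--\ref{lemma2.18} and Lemma \ref{lemma3.2} extend to arbitrary subintervals $[1,T]$ without loss, and that the smallness of $\|u_0\|_{\Sigma_s}$ is strong enough to absorb the nonlinear contribution on each such interval. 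Once the linear theory of Section 2 is applied on each subinterval, this uniformity is standard, but it must be verified carefully because otherwise the bootstrap constant could degenerate as $T \to T^*$.
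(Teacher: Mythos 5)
Your proposal is correct and follows essentially the same route as the paper: local well-posedness plus a uniform-in-$T$ a priori bound on $\|\arrowvert J_{A}\arrowvert^{s}u\|_{L^{\infty}_{t}L^{2}_{x}}$ obtained by rerunning the Strichartz, commutator and nonlinear estimates from the proof of Theorem \ref{theorem1.2} under the smallness hypothesis, followed by a continuation argument. The only cosmetic differences are that the paper produces the local solution by a contraction mapping for $\arrowvert J_{A}\arrowvert^{s}u$ and extends it interval by interval over $(T,2T),(2T,3T),\dots$ rather than invoking the blow-up alternative of \cite{NS}, and it uses $L^{2}$-conservation implicitly in the bootstrap rather than stating it separately.
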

	\begin{proof}
		Applying the Contraction Mapping Principle, we easily obtain that the solution $\arrowvert J_{A}\arrowvert^{s}u$ is locally well-posed in $L^{\infty}_tL^2_x$. Furthermore, according the Strichartz estimate to nonlinear Schr\"odinger equation \eqref{3.1}, we obtain that
		\begin{flalign}
			&\|\arrowvert J_{A}\arrowvert^{s}u\|_{L_{t}^{\infty}((1,T);L_{x}^{2}(\mathbb{R}^{n}))}\nonumber\\
			\leqslant&C\|\arrowvert J_{A}\arrowvert^{s}u(1)\|_{L_{x}^{2}}+C_{s}\|t^{s-1}M(t)V(s)M(-t)u\|_{L_{t}^{q}((1,T);L_{x}^{r}(\mathbb{R}^{n}))}\nonumber\\
			+&C_{s}\|\arrowvert J_{A}\arrowvert^{s}(\arrowvert u\arrowvert^{p-1}u)\|_{L_{t}^{1}((1,T);L_{x}^{2}(\mathbb{R}^{n}))},\label{A.1}
		\end{flalign}
		where $\frac{1}{q}+\frac{1}{q'}=1$, $\frac{1}{r}+\frac{1}{r'}=1$ and  $(q',r')\in\Lambda$ i.e. $\frac{2}{q'}=n(\frac{1}{2}-\frac{1}{r'})$.
		
		Then, repeating the previous process in Theorem \ref{theorem1.2}, it follows that for $\forall\ s>\frac{n}{2}$ and $p>1+\lceil\frac{n}{2}\rceil$
		\begin{flalign}
			&\|\arrowvert J_{A}\arrowvert^{s}u\|_{L_{t}^{\infty}((1,T);L_{x}^{2}(\mathbb{R}^{n}))}\nonumber\\
			\leqslant&C\|\arrowvert J_{A}\arrowvert^{s}u(1)\|_{L_{x}^{2}}+C_{s}\|u_{0}\|^{(1-\frac{n}{2s})(p-1)}_{L_{x}^{2}}\cdot\|\arrowvert J_{A}\arrowvert^{s}u\|_{L_{t}^{\infty}L_{x}^{2}}^{\frac{n(p-1)}{2s}}\nonumber\\
			&\cdot(\|\arrowvert J_{A}\arrowvert^{\frac{n}{2}-\theta}u\|_{L_{t}^{\infty}L_{x}^{2}}+\| J_{A}\arrowvert^{s}u\|_{L_{t}^{\infty}L_{x}^{2}}+\|\arrowvert J_{A}\arrowvert u\|_{L_{t}^{\infty}L_{x}^{2}}).\label{A.2}
		\end{flalign}
		Therefore, let $\|\arrowvert J_{A}\arrowvert^{s}u(1)\|_{L^{2}_{x}(\mathbb{R}^{n})}$ be small enough, we obtain that
		\begin{equation}\label{A.3}
			\|\arrowvert J_{A}\arrowvert^{s}u\|_{L_{t}^{\infty}((1,T);L_{x}^{2}(\mathbb{R}^{n}))}\leqslant C\|\arrowvert J_{A}\arrowvert^{s}u(1)\|_{L_{x}^{2}}.
		\end{equation}
		
		Next, when $t\in(T,2T)$, we can directly obtain that
		\begin{flalign}\label{A.4}
			&\|\arrowvert J_{A}\arrowvert^{s}u\|_{L_{t}^{\infty}((T,2T);L_{x}^{2}(\mathbb{R}^{n}))}\nonumber\\
			\leqslant&C_{1}\|\arrowvert J_{A}\arrowvert^{s}u(T)\|_{L_{x}^{2}}\nonumber\\
			\leqslant&C_{2}\|\arrowvert J_{A}\arrowvert^{s}u(1)\|_{L_{x}^{2}}.
		\end{flalign}	
		We can continue to do it for $t\in(1,T),(T,2T),(2T,3T)...$ until $t=\infty$, that is to say, equation \eqref{3.1} is globally well-posed for $\arrowvert J_{A}\arrowvert^{s}u\in L_{t}^{\infty}((1,\infty);L_{x}^{2}(\mathbb{R}^{n}))$. Therefore, the proof of Theorem \ref{theorem A.1} is completed.
	\end{proof}
	
	\begin{remark}\label{remark A.1}
		It is well known that the solution to nonlinear Schr\"odinger equation \eqref{1.1}	conservers energy, therefore, we have the initial value problem in \eqref{1.1} is globally well-posed in $\dot{H}^{\alpha}(\mathbb{R}^{n})$ for $0<\alpha<\min\{1,s\}$ when $n\leqslant3$. In particular, we have $\alpha=1$ when $n=3$. Because there is a specific expression for the conservation of energy, we just prove the globally well-posed in $\dot{H}^{1}$.
		
		In fact, from conservation of energy, we have
		\begin{equation*}
			\|u\|_{\dot{H}^{1}(\mathbb{R}^{n})}^{2}=\|\nabla u\|_{L^{2}_{x}}^{2}\leqslant C( E(u)+\|u\|^{p+1}_{L^{p+1}})
			=C(E(u_{0})+\|u\|^{p+1}_{L^{p+1}_{x}}),
		\end{equation*}
		where
		\begin{equation}
			\label{A.5}
			E(u_{0})=\|\nabla u_{0}\|_{L^{2}_{x}}^{2}+\|u_{0}\|^{p+1}_{L^{p+1}_{x}}, \end{equation}
		and
		\begin{flalign}
			&\|u\|^{p+1}_{L^{p+1}_{x}(\mathbb{R}^{n})}\nonumber\\
			=&\|M(-t)u\|^{p+1}_{L^{p+1}_{x}(\mathbb{R}^{n})}\nonumber\\
			\leqslant&C\|M(-t)u\|_{L^{2}_{x}(\mathbb{R}^{n})}^{(1-n(\frac{1}{2}-\frac{1}{p+1}))(p+1)}\cdot\|\nabla(M(-t)u)\|_{L^{2}_{x}(\mathbb{R}^{n})}^{n(\frac{1}{2}-\frac{1}{p+1})(p+1)}\nonumber\\
			\leqslant&C\|u_{0}\|_{L^{2}_{x}(\mathbb{R}^{n})}^{n-\frac{p+1}{2}}\cdot t^{-\frac{n}{2}(p-1)}\cdot\|\arrowvert J\arrowvert u\|_{L^{2}_{x}(\mathbb{R}^{n})}^{\frac{n}{2}(p-1)}.\label{A.6}
		\end{flalign}
		
		Combining \eqref{A.5} and \eqref{A.6}, we obtain that for $p>1+\lceil\frac{n}{2}\rceil$
		\begin{flalign}
			\|u\|_{\dot{H}^{1}(\mathbb{R}^{n})}^{2}\leqslant&C\|\nabla u_{0}\|_{L^{2}_{x}}^{2}+\|u_{0}\|_{\dot{H}^{1}_{x}}^{p+1}+\|(-\Delta)^{\frac{s}{2}}(e^{-\frac{i\arrowvert x\arrowvert^{2}}{4}}u_{0})\|_{L^{2}_{x}}^{\frac{n-1}{2}(p+1)}\nonumber\\
			\leqslant& C\|u_{0}\|_{\Sigma_{s}}^{p+1}.\label{A.7}
		\end{flalign}
		That is to say, it make sense to \eqref{2.26} in Lemma \ref{lemma2.6}.
	\end{remark}
	
	\begin{remark}\label{remark A.2}
		Assume $A(x)$ satisfies hypothesis  of Theorem \ref{theorem1.2}, let $u$ is a solution of equation \eqref{1.1}. If $u_{0}\in\Sigma_{s}$ with $2k+1<s<2k+2$, $k=1,2,...$, then $u$ is globally well-posed in $\dot{H}^{2k+\alpha}_{A}(\mathbb{R}^{n})$ for $0<\alpha<1$ when $n>3$.
		
		In fact, from the boundness of $V_{x}$ in Lemma \ref{lemma2.9}, we have the \textquotedblleft almost equivalence\textquotedblright between $(-\Delta)^{\frac{s}{2}}$ and $(-\Delta_{A})^{\frac{s}{2}}$ with $s>\frac{n}{2}$:
		$$\|(-\Delta)^{\frac{s}{2}}u-(-\Delta_{A})^{\frac{s}{2}}u\|_{L^{2}}\leqslant C\|u\|_{H^{s}}.$$
		Hence, it is enough to prove $u$ is globally well-posed in $\dot{H}^{2k+\alpha}(\mathbb{R}^{n})$. Applying Duhamel's formula to solution of the equation \eqref{1.1}, we have
		\begin{equation}\label{A.8}
			u(x,t)=e^{it\Delta_{A}}u_{0}+i\int_{1}^{t}e^{i(t-\tau)\Delta_{A}}\arrowvert u\arrowvert^{p-1}u(\tau)d\tau.
		\end{equation}	
		Hence, we estimate that
		\begin{flalign}
			&\|u(x,t)\|^{2}_{\dot{H}_{x}^{2k+\alpha}}\nonumber\\
			=&\|e^{it\Delta_{A}}u_{0}+i\int_{1}^{t}e^{i(t-\tau)\Delta_{A}}\rho\arrowvert u\arrowvert^{p-1}u(\tau)d\tau\|_{\dot{H}_{x}^{2k+\alpha}}^{2}\nonumber\\
			\leqslant&C\|u_{0}\|_{\Sigma_{s}}^{2}+\|\int_{1}^{t}e^{i(t-\tau)\Delta_{A}}\arrowvert u\arrowvert^{p-1}u(\tau)d\tau\|_{\dot{H}_{x}^{2k+\alpha}}^{2}.\label{A.9}
		\end{flalign}
		
		It follows from the  Strichartz estimate (see Remark 2.3.8 in \cite{C}) that
		\begin{equation}\label{A.10}
			\|\int_{1}^{t}e^{i(t-\tau)\Delta_{A}}\arrowvert u\arrowvert^{p-1}u(\tau)d\tau\|_{\dot{H}_{x}^{2k+\alpha}}\leqslant C\|\arrowvert u\arrowvert^{p-1}u\|_{L^{1}((1,t);\dot{W}^{2k+\alpha,2})}.
		\end{equation}	
		By applying estimate $\|\arrowvert u\arrowvert^{p-1}u\|_{\dot{W}^{2k+\alpha,2}}\leqslant C\|u\|^{p-1}_{L^{\infty}}\cdot\|u\|_{\dot W^{2k+\alpha,2}}$
		and H\"older inequality, we obtain that
		\begin{flalign}
			&\|\arrowvert u\arrowvert^{p-1}u\|_{L^{1}((1,t);\dot{W}^{2k+\alpha,2})}\nonumber\\
			\leqslant&C\int_{1}^{t}\|u\|_{L^{\infty}_{x}}^{p-1}d\tau\cdot\|u\|_{L^{\infty}((1,t);\dot{W}^{2k+\alpha,2})}\nonumber\\
			\leqslant&C\int_{1}^{t}\tau^{-\frac{n(p-1)}{2}}d\tau\cdot\|u\|_{L^{\infty}((1,t);\dot{W}^{2k+\alpha,2})}.\label{A.11}
		\end{flalign}
		Since $p>2$, we have $\frac{n(p-1)}{2}>1$. Combining this and the property of solution $u$ for equation \eqref{1.1} (see Corollary 7.3.4 in \cite{C}): $\|u\|_{L^{\infty}((1,t);\dot{W}^{2k+\alpha,2})}\leqslant C$, it follows that
		\begin{equation}\label{A.12}
			\|\int_{1}^{t}e^{i(t-\tau)\Delta_{A}}\arrowvert u\arrowvert^{p-1}u(\tau)d\tau\|_{\dot{H}_{x}^{2k+\alpha}}\leqslant C\|u_{0}\|_{\Sigma_{s}}.
		\end{equation}
		That is to say, it make sense to \eqref{2.54} in Lemma \ref{lemma2.7}.
	\end{remark}

	\begin{remark}\label{remark A.3}
		Applying \eqref{3.24} and the global well-posedness of $\arrowvert J_{A}\arrowvert^{s}$, we have
		\begin{flalign}
			\|u\|_{L^{\infty}_{t,x}((1,\infty)\times\mathbb{R}^{n})}=&\|M(-t)u\|_{L^{\infty}_{t,x}((1,\infty)\times\mathbb{R}^{n})}\nonumber\\
			\leqslant& C\|t^{-\frac{n}{2}}\|_{L^{\infty}_{t}(1,\infty)}\cdot\|u\|_{L^{2}_{x}(\mathbb{R}^{n})}^{1-\frac{n}{2s}}\cdot\|\arrowvert J_{A}\arrowvert^{s}u\|_{L_{t}^{\infty}L^{2}_{x}}^{\frac{n}{2s}}\nonumber\\
			\leqslant& C\|t^{-\frac{n}{2}}\|_{L^{\infty}_{t}(1,\infty)}\cdot\|\arrowvert J_{A}\arrowvert^{s}u(1)\|_{L_{x}^{2}}\nonumber\\
			\leqslant& C\|\arrowvert J_{A}\arrowvert^{s}u(1)\|_{L_{x}^{2}}.\label{A.13}
		\end{flalign}
		
		Hence, we obtain that the initial value problem of \eqref{1.1} is globally well-posed in $L^{\infty}_{t,x}((1,\infty)\times\mathbb{R}^{n})$. In other words, it make sense to our decay result \eqref{1.11} in Theorem \ref{theorem1.2}.
	\end{remark}
\phantomsection
\addcontentsline{toc}{section}{Appendix B. The description of space $\Sigma_{s}$}
\section*{Appendix B. The description of space $\Sigma_{s}$}
\setcounter{equation}{0}
\setcounter{subsection}{0}
\setcounter{theorem}{0}
\setcounter{remark}{0}
\renewcommand{\theequation}{B.\arabic{equation}}
\renewcommand{\thesubsection}{B.\arabic{subsection}}
\renewcommand{\thetheorem}{B.\arabic{theorem}}
\renewcommand{\theremark}{B.\arabic{remark}}
	
	In this appendix, we mainly describe the initial value space $\Sigma_{s}$. And it is divided into two parts according to the dimension of space, $n\leqslant3$ and $n>3$ as follows.
	
	When $n\leqslant3$, we have
	\begin{flalign}\label{B1}
		\|(-\Delta)^{\frac{s}{2}}(e^{-\frac{i\arrowvert x\arrowvert^{2}}{4}}u_{0})\|_{L^{2}}\leqslant C\|u_{0}\|_{\dot{H}^{s}}+C\|\langle x\rangle u_{0}\|_{L^{2}}+C\|\arrowvert x\arrowvert(-\Delta)^{\frac{s-1}{2}}u_{0}\|_{L^{2}}.
	\end{flalign}
When $n>3$, we have
	\begin{flalign}\label{B2}
		\|(-\Delta)^{\frac{s}{2}}(e^{-\frac{i\arrowvert x\arrowvert^{2}}{4}}u_{0})\|_{L^{2}}\nonumber\leqslant& C\|u_{0}\|_{\dot{H}^{s}}+C\|\langle x\rangle^{s} u_{0}\|_{L^{2}}\nonumber\\
		+&C\sum\limits_{1\leqslant\arrowvert\alpha\arrowvert<s}\|(\partial^{\alpha}e^{-\frac{i\arrowvert x\arrowvert^{2}}{4}})D^{s,\alpha}u_{0}\|_{L^{2}}.	
	\end{flalign}
	
	In fact, let us recall the formula (see Theorem 1.2 in \cite{D} which generalizes the Kenig-Ponce-Vega estimate in \cite{KPV})
	\begin{flalign*}
		\|D^{s}(fg)\|_{L^{p}}\leqslant C\|fD^{s}g\|_{L^{p}}+C\|gD^{s}f\|_{L^{p}}+C\|\sum\limits_{1\leqslant\arrowvert\alpha\arrowvert<s}\partial^{\alpha}fD^{s,\alpha}g\|_{L^{p}},
	\end{flalign*}
	where $s>0$, $1<p<\infty$, $D^{s}:=(-\Delta)^{\frac{s}{2}}$, $$\widehat{D^{s,\alpha}g(\xi)}=\widehat{D^{s,\alpha}(\xi)}\hat{g}(\xi)=i^{-\arrowvert\alpha\arrowvert}\partial^{\alpha}_{\xi}(\arrowvert\xi\arrowvert^{s})\hat{g}(\xi)$$
	and $f,g\in\mathscr{S}(\mathbb{R}^{n})$. Therefore, when $1<s<2$, let $p=2$ and $\arrowvert\alpha\arrowvert=1$. We choose $f=e^{-\frac{i\arrowvert x\arrowvert^{2}}{4}}$ and $g=u_{0}(x)$, they satisfy that  $fD^{s}g\in L^{2}(\mathbb{R}^{n})$, $fD^{s}f\in L^{2}(\mathbb{R}^{n})$ and $\partial fD^{s,1}g\in L^{2}(\mathbb{R}^{n})$. Then we have
	\begin{flalign}\label{B3}
		&\|(-\Delta)^{\frac{s}{2}}(e^{-\frac{i\arrowvert x\arrowvert^{2}}{4}}u_{0})\|_{L^{2}}\nonumber\\
		\leqslant&C\|(-\Delta)^{\frac{s}{2}}u_{0}\|_{L^{2}}+C\|u_{0}(-\Delta)^{\frac{s}{2}}(e^{-\frac{i\arrowvert x\arrowvert^{2}}{4}})\|_{L^{2}}+C\|(\partial e^{-\frac{i\arrowvert x\arrowvert^{2}}{4}})(D^{s,1}u_{0})\|_{L^{2}}\nonumber\\
		=&C\|u_{0}\|_{\dot{H}^{s}}+C\|u_{0}(-\Delta)^{\frac{s}{2}}(e^{-\frac{i\arrowvert x\arrowvert^{2}}{4}})\|_{L^{2}}+C\|\arrowvert x\arrowvert(-\Delta)^{\frac{s-1}{2}}u_{0}\|_{L^{2}}.
	\end{flalign}	
	
	Next, we mainly estimate the term $\|u_{0}(-\Delta)^{\frac{s}{2}}(e^{-\frac{i\arrowvert x\arrowvert^{2}}{4}})\|_{L^{2}}$. Recall the formula $(-\Delta)^{\frac{s}{2}}=c(s)(-\Delta)\int_{0}^{\infty}\tau^{\frac{s}{2}-1}(\tau-\Delta)^{-1}d\tau$ for $0<s<2$ and $(c(s))^{-1}=\int_{0}^{\infty}\tau^{\frac{s}{2}-1}(\tau+1)^{-1}d\tau$. Then
$(-\Delta)^{\frac{s}{2}}e^{-\frac{i\arrowvert x\arrowvert^{2}}{4}}=\int_{0}^{\infty}\tau^{\frac{s}{2}-1}(\tau-\Delta)^{-1}e^{-\frac{i\arrowvert x\arrowvert^{2}}{4}}(\frac{-inc(s)}{2}-\frac{c(s)\arrowvert x\arrowvert^{2}}{4})d\tau
:=-\frac{inc(s)}{2}\uppercase\expandafter{\romannumeral1}-\frac{c(s)}{4}\uppercase\expandafter{\romannumeral2}.$
	Because that the kernel of the resolvent $(\tau-\Delta)^{-1}$ has a display expression in three-dimensional space, so let's look at the situation when $n=3$ first.
	
	For \uppercase\expandafter{\romannumeral1}, we have
$
		\uppercase\expandafter{\romannumeral1}=\int_{0}^{\infty}\tau^{\frac{s}{2}-1}\int_{\mathbb{R}^{3}}\frac{e^{-\sqrt{\tau}\arrowvert x-y\arrowvert}}{\arrowvert x-y\arrowvert}\cdot e^{-\frac{i\arrowvert y\arrowvert^{2}}{4}}dyd\tau.
$
	When $0<\tau<1$, let $y=r\omega$, i.e.
	$\int_{0}^{1}\tau^{\frac{s}{2}-1}d\tau\int_{\sigma(\omega)}dS_{\omega}\int_{0}^{\infty}\frac{e^{-\sqrt{\tau}\arrowvert x-r\omega\arrowvert}}{\arrowvert x-r\omega\arrowvert}\cdot e^{-\frac{ir^{2}}{4}}\cdot r^{2}dr$ and split $\int_{0}^{\infty}dr$ into two terms. For the first term, it can be obtained by integration by parts that
	\begin{flalign*}
		\arrowvert\int_{0}^{1}\tau^{\frac{s}{2}-1}d\tau\int_{\sigma(\omega)}dS_{\omega}\int_{0}^{\frac{\arrowvert x\arrowvert}{2}}\frac{e^{-\sqrt{\tau}\arrowvert x-r\omega\arrowvert}}{\arrowvert x-r\omega\arrowvert}\cdot e^{-\frac{ir^{2}}{4}}\cdot r^{2}dr\arrowvert\leqslant C+C\arrowvert x\arrowvert.
	\end{flalign*}
	For the second term, it can be split two parts: $\int_{\frac{\arrowvert x\arrowvert}{2}}^{\infty}=\int_{\frac{\arrowvert x\arrowvert}{2}}^{\arrowvert x\arrowvert+2}+\int_{\arrowvert x\arrowvert+2}^{\infty}$, and when $0<\arrowvert x\arrowvert<1$, we have
	\begin{flalign*}
		\arrowvert\int_{0}^{1}\tau^{\frac{s}{2}-1}d\tau\int_{\sigma(\omega)}dS_{\omega}\int_{\frac{\arrowvert x\arrowvert}{2}}^{\arrowvert x\arrowvert+2}\frac{e^{-\sqrt{\tau}\arrowvert x-r\omega\arrowvert}}{\arrowvert x-r\omega\arrowvert}\cdot e^{-\frac{ir^{2}}{4}}\cdot r^{2}dr\arrowvert\leqslant C,
	\end{flalign*}
	and
	\begin{flalign*}
		\arrowvert\int_{0}^{1}\tau^{\frac{s}{2}-1}d\tau\int_{\sigma(\omega)}dS_{\omega}\int_{\arrowvert x\arrowvert+2}^{\infty}\frac{e^{-\sqrt{\tau}\arrowvert x-r\omega\arrowvert}}{\arrowvert x-r\omega\arrowvert}\cdot e^{-\frac{ir^{2}}{4}}\cdot r^{2}dr\arrowvert\leqslant C.
	\end{flalign*}
	When $\arrowvert x\arrowvert>1$, we estimate that
	\begin{flalign*}
		\arrowvert\int_{0}^{1}\tau^{\frac{s}{2}-1}d\tau\int_{\sigma(\omega)}dS_{\omega}\int_{\frac{\arrowvert x\arrowvert}{2}}^{\arrowvert x\arrowvert+2}\frac{e^{-\sqrt{\tau}\arrowvert x-r\omega\arrowvert}}{\arrowvert x-r\omega\arrowvert}\cdot e^{-\frac{ir^{2}}{4}}\cdot r^{2}dr\arrowvert\leqslant C\arrowvert x\arrowvert,
	\end{flalign*}
	and
	\begin{flalign*}
		\arrowvert\int_{0}^{1}\tau^{\frac{s}{2}-1}d\tau\int_{\sigma(\omega)}dS_{\omega}\int_{\arrowvert x\arrowvert+2}^{\infty}\frac{e^{-\sqrt{\tau}\arrowvert x-r\omega\arrowvert}}{\arrowvert x-r\omega\arrowvert}\cdot e^{-\frac{ir^{2}}{4}}\cdot r^{2}dr\arrowvert\leqslant C.
	\end{flalign*}
	Therefore, we have the estimate of the second term
	$$\arrowvert\int_{0}^{1}\tau^{\frac{s}{2}-1}d\tau\int_{\sigma(\omega)}dS_{\omega}\int_{\frac{\arrowvert x\arrowvert}{2}}^{\infty}\frac{e^{-\sqrt{\tau}\arrowvert x-r\omega\arrowvert}}{\arrowvert x-r\omega\arrowvert}\cdot e^{-\frac{ir^{2}}{4}}\cdot r^{2}dr\arrowvert\leqslant C+C\arrowvert x\arrowvert.$$
	When $\tau>1$, we let $x-y=r\omega$ and obtain that
	\begin{flalign*}
		\arrowvert\int_{1}^{\infty}\tau^{\frac{s}{2}-1}d\tau\int_{\sigma(\omega)}dS_{\omega}\int_{0}^{\infty}\frac{e^{-\sqrt{\tau}r}}{r}\cdot e^{-\frac{i\arrowvert x-r\omega\arrowvert^{2}}{4}}\cdot r^{2}dr\arrowvert\leqslant C.
	\end{flalign*}
	Combining these results of $0<\tau<1$ and $\tau>1$, we have
	\begin{flalign}
		\arrowvert\uppercase\expandafter{\romannumeral1}\arrowvert\leqslant C+C\arrowvert x\arrowvert.\label{B6}
	\end{flalign}
	
	For \uppercase\expandafter{\romannumeral2}, we have
	$
		\uppercase\expandafter{\romannumeral2}=\int_{0}^{\infty}\tau^{\frac{s}{2}-1}\int_{\mathbb{R}^{3}}\frac{e^{-\sqrt{\tau}\arrowvert x-y\arrowvert}}{\arrowvert x-y\arrowvert}\cdot e^{-\frac{i\arrowvert y\arrowvert^{2}}{4}}\cdot\arrowvert y\arrowvert^{2}dyd\tau.
$
	When $0<\tau<1$, let $y=r\omega$, i.e. $\int_{0}^{1}\tau^{\frac{s}{2}-1}d\tau\int_{\sigma(\omega)}dS_{\omega}\int_{0}^{\infty}\frac{e^{-\sqrt{\tau}\arrowvert x-r\omega\arrowvert}}{\arrowvert x-r\omega\arrowvert}\cdot e^{-\frac{ir^{2}}{4}}\cdot r^{4}dr$ and split $\int_{0}^{\infty}dr$ into two terms. For the first term, it can be obtained by integration by parts that
	\begin{flalign*}
		\arrowvert\int_{0}^{1}\tau^{\frac{s}{2}-1}d\tau\int_{\sigma(\omega)}dS_{\omega}\int_{0}^{\frac{\arrowvert x\arrowvert}{2}}\frac{e^{-\sqrt{\tau}\arrowvert x-r\omega\arrowvert}}{\arrowvert x-r\omega\arrowvert}\cdot e^{-\frac{ir^{2}}{4}}\cdot r^{4}dr\arrowvert\leqslant C+C\arrowvert x\arrowvert.
	\end{flalign*}
	For the second term, it can be split two parts: $\int_{\frac{\arrowvert x\arrowvert}{2}}^{\infty}=\int_{\frac{\arrowvert x\arrowvert}{2}}^{\arrowvert x\arrowvert+2}+\int_{\arrowvert x\arrowvert+2}^{\infty}$, and when $0<\arrowvert x\arrowvert<1$, we have
	\begin{flalign*}
		\arrowvert\int_{0}^{1}\tau^{\frac{s}{2}-1}d\tau\int_{\sigma(\omega)}dS_{\omega}\int_{\frac{\arrowvert x\arrowvert}{2}}^{\arrowvert x\arrowvert+2}\frac{e^{-\sqrt{\tau}\arrowvert x-r\omega\arrowvert}}{\arrowvert x-r\omega\arrowvert}\cdot e^{-\frac{ir^{2}}{4}}\cdot r^{4}dr\arrowvert\leqslant C,
	\end{flalign*}
	and
	\begin{flalign*}
		\arrowvert\int_{0}^{1}\tau^{\frac{s}{2}-1}d\tau\int_{\sigma(\omega)}dS_{\omega}\int_{\arrowvert x\arrowvert+2}^{\infty}\frac{e^{-\sqrt{\tau}\arrowvert x-r\omega\arrowvert}}{\arrowvert x-r\omega\arrowvert}\cdot e^{-\frac{ir^{2}}{4}}\cdot r^{4}dr\arrowvert\leqslant C.
	\end{flalign*}
	When $\arrowvert x\arrowvert>1$, we have
	\begin{flalign*}
		\arrowvert\int_{0}^{1}\tau^{\frac{s}{2}-1}d\tau\int_{\sigma(\omega)}dS_{\omega}\int_{\frac{\arrowvert x\arrowvert}{2}}^{\arrowvert x\arrowvert+2}\frac{e^{-\sqrt{\tau}\arrowvert x-r\omega\arrowvert}}{\arrowvert x-r\omega\arrowvert}\cdot e^{-\frac{ir^{2}}{4}}\cdot r^{4}dr\arrowvert\leqslant C\arrowvert x\arrowvert.
	\end{flalign*}
	And
	\begin{flalign*}
		\arrowvert\int_{0}^{1}\tau^{\frac{s}{2}-1}d\tau\int_{\sigma(\omega)}dS_{\omega}\int_{\arrowvert x\arrowvert+2}^{\infty}\frac{e^{-\sqrt{\tau}\arrowvert x-r\omega\arrowvert}}{\arrowvert x-r\omega\arrowvert}\cdot e^{-\frac{ir^{2}}{4}}\cdot r^{4}dr\arrowvert\leqslant C+C\arrowvert x\arrowvert.
	\end{flalign*}
	Therefore, we have the estimate of the second term
	$$\arrowvert\int_{0}^{1}\tau^{\frac{s}{2}-1}d\tau\int_{\sigma(\omega)}dS_{\omega}\int_{\frac{\arrowvert x\arrowvert}{2}}^{\infty}\frac{e^{-\sqrt{\tau}\arrowvert x-r\omega\arrowvert}}{\arrowvert x-r\omega\arrowvert}\cdot e^{-\frac{ir^{2}}{4}}\cdot r^{4}dr\arrowvert\leqslant C+C\arrowvert x\arrowvert.$$
	When $\tau>1$, we let $x-y=r\omega$ and it can be obtained by integration by parts that
\begin{flalign*}
		\arrowvert\int_{1}^{\infty}\tau^{\frac{s}{2}-1}d\tau\int_{\sigma(\omega)}dS_{\omega}\int_{0}^{\infty}\frac{e^{-\sqrt{\tau}r}}{r}\cdot r^{2}\cdot e^{-\frac{i\arrowvert x-r\omega\arrowvert^{2}}{4}}\cdot\arrowvert x-r\omega\arrowvert^{2}dr\arrowvert\leqslant C+C\arrowvert x\arrowvert.
	\end{flalign*}
	Combining these results of $0<\tau<1$ and $\tau>1$, we have
	\begin{flalign}
		\arrowvert\uppercase\expandafter{\romannumeral2}\arrowvert
		\leqslant C+C\arrowvert x\arrowvert.\label{B8}
	\end{flalign}
	
	Therefore, substituting estimates of \eqref{B6} and \eqref{B8} into the estimate of $(-\Delta)^{\frac{s}{2}}e^{-\frac{i\arrowvert x\arrowvert^{2}}{4}}$, we obtain that
	$$\arrowvert(-\Delta)^{\frac{s}{2}}e^{-\frac{i\arrowvert x\arrowvert^{2}}{4}}\arrowvert\leqslant C\arrowvert\uppercase\expandafter{\romannumeral1}\arrowvert+C\arrowvert\uppercase\expandafter{\romannumeral2}\arrowvert\leqslant C\langle x\rangle,$$
	for $n=3$. This implies that
	$$\|u_{0}(x)(-\Delta)^{\frac{s}{2}}(e^{-\frac{i\arrowvert x\arrowvert^{2}}{4}})\|_{L^{2}_{x}}\leqslant C\|\langle x\rangle u_{0}\|_{L^{2}_{x}}.$$
	
	When $n=2$, the resolvent $R_{0}(\tau)=(\tau-\Delta)^{-1}$ is an integral operator with the integral kernel$\frac{1}{4}H_{0}^{(1)}(i\tau^{\frac{1}{2}}\arrowvert x-y\arrowvert),$ where $H_{0}^{(1)}$ is the first Hankel function. Then we have
	\begin{flalign}
		(-\Delta)^{\frac{s}{2}}e^{-\frac{i\arrowvert x\arrowvert^{2}}{4}}=&c(s)\int_{0}^{\infty}\tau^{\frac{s}{2}-1}(\tau-\Delta)^{-1}e^{-\frac{i\arrowvert x\arrowvert^{2}}{4}}(-i-\frac{\arrowvert x\arrowvert^{2}}{4})d\tau\nonumber\\
		=&-\frac{ic(s)}{4}\int_{0}^{\infty}\tau^{\frac{s}{2}-1}\int_{\mathbb{R}^{2}}H_{0}^{(1)}(i\tau^{\frac{1}{2}}\arrowvert x-y\arrowvert)\cdot e^{-\frac{i\arrowvert y\arrowvert^{2}}{4}}dyd\tau\nonumber\\
		&-\frac{c(s)}{16}\int_{0}^{\infty}\tau^{\frac{s}{2}-1}H_{0}^{(1)}(i\tau^{\frac{1}{2}}\arrowvert x-y\arrowvert)\cdot e^{-\frac{i\arrowvert y\arrowvert^{2}}{4}}\cdot\arrowvert y\arrowvert^{2}dyd\tau\nonumber\\
		:=&-\frac{ic(s)}{4}\uppercase\expandafter{\romannumeral1'}-\frac{c(s)}{16}\uppercase\expandafter{\romannumeral2'}.\label{B9}
	\end{flalign}
	Let's recall some properties of Hankel functions (see \cite{AS}). (1) We have the following recurrence formula:
	$$\frac{d}{d\zeta}(\zeta^{\nu}H_{\nu}^{(1)}(\zeta))=\zeta^{\nu}H_{\nu-1}^{(1)}(\zeta),\quad  \zeta>0,\ Re\nu>\frac{1}{2};$$
	(2) we have the following estimates:
	\begin{equation*}
		\arrowvert H_{\nu}^{(1)}(i\tau^{\frac{1}{2}}\arrowvert x-y\arrowvert)\arrowvert\leqslant
		\left\{
		\begin{split}
			&C(\tau^{\frac{1}{2}}\arrowvert x-y\arrowvert)^{-\nu}, \qquad\qquad\,\,\,\,\,     \tau^{\frac{1}{2}}\arrowvert x-y\arrowvert\leqslant1;\\
			&C\tau^{-\frac{1}{4}}\arrowvert x-y\arrowvert^{-\frac{1}{2}}e^{-\tau^{\frac{1}{2}}\arrowvert x-y\arrowvert},\ \ \tau^{\frac{1}{2}}\arrowvert x-y\arrowvert>1.
		\end{split}
		\right.
	\end{equation*}
	Therefore, we can divide into $\uppercase\expandafter{\romannumeral1'}$ two parts, $0<\tau<1$ and $\tau>1$, and agree to apply to $\uppercase\expandafter{\romannumeral2'}$.  Applying the same method of $n=3$ to \eqref{B9} and using above properties, we also obtain that
	$$\arrowvert(-\Delta)^{\frac{s}{2}}e^{-\frac{i\arrowvert x\arrowvert^{2}}{4}}\arrowvert\leqslant C\arrowvert\uppercase\expandafter{\romannumeral1'}\arrowvert+C\arrowvert\uppercase\expandafter{\romannumeral2'}\arrowvert\leqslant C\langle x\rangle,$$
	hence
	$$\|u_{0}(x)(-\Delta)^{\frac{s}{2}}(e^{-\frac{i\arrowvert x\arrowvert^{2}}{4}})\|_{L^{2}_{x}}\leqslant C\|\langle x\rangle u_{0}\|_{L^{2}_{x}}.$$
	
	When $n>3$, there imply $s>2$. In order to simplify the form, we do not adopt the method of calculating the integral kernel of the free resolvent. We use the result of oscillation integral to estimate directly $\|u_{0}(x)(-\Delta)^{\frac{s}{2}}(e^{-\frac{i\arrowvert x\arrowvert^{2}}{4}})\|_{L^{2}_{x}}$ and get a slightly rough result.
	\begin{flalign*}
		\|u_{0}(x)(-\Delta)^{\frac{s}{2}}(e^{-\frac{i\arrowvert x\arrowvert^{2}}{4}})\|^{2}_{L^{2}_{x}}=\frac{1}{(\pi i)^{n}}\|u_{0}(x)\int_{\mathbb{R}^{n}}e^{ix\cdot\xi}\arrowvert\xi\arrowvert^{s}e^{i\arrowvert \xi\arrowvert^{2}}d\xi\|^{2}_{L^{2}_{x}}\leqslant C\|\langle x\rangle^{s} u_{0}\|^{2}_{L^{2}_{x}},
	\end{flalign*}
	since that
	\begin{flalign*}
		\mathscr{F}(e^{-\frac{i\arrowvert x\arrowvert^{2}}{4}})(\xi)=(2\pi)^{-\frac{n}{2}}e^{i\arrowvert\xi\arrowvert^{2}}\int_{\mathbb{R}}e^{-i(\xi_{1}+\frac{x_{1}}{2})^{2}}dx_{1}...\int_{\mathbb{R}}e^{-i(\xi_{n}+\frac{x_{n}}{2})^{2}}dx_{n}=(\frac{2}{i})^{\frac{n}{2}}e^{i\arrowvert\xi\arrowvert^{2}},
	\end{flalign*}
	and we have estimates of oscillation integral as following (see Lemma 2.1 in \cite{HHZ})
	$$\arrowvert\int_{\mathbb{R}^{n}}e^{ix\cdot\xi}\arrowvert\xi\arrowvert^{s}e^{i\arrowvert \xi\arrowvert^{2}}d\xi\arrowvert\leqslant C\arrowvert x\arrowvert^{s}+C.$$
	
	Combining with the estimates of all dimensions, we divide the result into two parts. When $n\leqslant3$, we obtain the estimate in \eqref{B1}. When $n>3$, we obtain the estimate in \eqref{B2}.
	
	Therefore, the description of initial value space $$\|u_{0}\|_{\Sigma_{s}}:=\|(-\Delta)^{\frac{s}{2}}(e^{-\frac{i\arrowvert x\arrowvert^{2}}{4}}u_{0})\|_{L^{2}_{x}}$$ is complete.

\vspace{2em}
\phantomsection
\addcontentsline{toc}{section}{References}
	
\end{document}